\theoremstyle{plain}
\newtheorem*{maintheorem*}{Main Theorem}
\newtheorem*{thmd*}{Theorem 1.5}
\newtheorem*{thme*}{Theorem 1.6}
\newtheorem*{remark*}{Remark}
\newtheorem*{conjecture*}{Conjecture}
\newtheorem*{prop*}{Proposition}
\newtheorem{thm}{Theorem}[section]
\newtheorem{lem}[thm]{Lemma}
\newtheorem{prop}[thm]{Proposition}
\theoremstyle{definition}
\newtheorem*{proofc*}{Proof of Theorem C}
\newtheorem{remark}[thm]{Remark}
\newtheorem{notation}[thm]{Notation}
\DeclareMathOperator{\CAT}{CAT}
\DeclareMathOperator{\Teich}{Teich}
\DeclareMathOperator{\Mod}{Mod}
\DeclareMathOperator{\WP}{WP}
\DeclareMathOperator{\base}{base}
\DeclareMathOperator{\tw}{tw}
\DeclareMathOperator{\inj}{inj}
\DeclareMathOperator{\diam}{diam}
\DeclareMathOperator{\nonannular}{non-annular}
\DeclareMathOperator{\I}{lm}
\numberwithin{equation}{section}
\renewcommand{\bold}[1]{\medskip \noindent {\bf \boldmath #1
                        }\nopagebreak[4]}   
\begin{document}


\title[Recurrent WP rays with non-uniquely ergodic
Laminations]{Recurrent Weil-Petersson geodesic rays with non-uniquely
  ergodic ending laminations} 

\date{\today}


\author{Jeffrey Brock}
\address{Department of Mathematics, Brown University, Providence, RI, }
\email{brock@math.brown.edu}
\author{Babak Modami}
\address{ Department of Mathematics, University of Illinois at Urbana-Champaign, 1409 W Green ST, Urbana, IL}
\email{bmodami@illinois.edu}
\thanks{The first author was partially supported by NSF grant DMS-1207572.}

\subjclass[2010]{Primary 30F60, 32G15, Secondary 37D40} 



\begin{abstract}
  We construct Weil-Petersson (WP) geodesic rays with minimal filling
  non-uniquely ergodic ending lamination which are recurrent to a
  compact subset of the moduli space of Riemann surfaces. This
  construction shows that an analogue of Masur's criterion for
  Teichm\"{u}ller geodesics does not hold for
  WP geodesics.
\end{abstract}

\maketitle

\setcounter{tocdepth}{3}
\tableofcontents

\section{Introduction}

The Weil-Petersson (WP) metric on Teichm\"uller space provides a
negatively curved, Riemannian alternative to the more familiar
Teichm\"uller metric, a Finsler metric whose global geometry is not
negatively curved in any general sense.  While negative curvature
allows one to harness a broad range of techniques from hyperbolic
geometry, difficulties in implementing these techniques arise from the
fact that the WP metric is {\em incomplete} and that its sectional
curvatures approach both $0$ and $-\infty$ asymptotically near the
completion.  Nevertheless, it is useful to draw analogies between
these metrics and instructive to determine which of these are robust
or obtainable through methods in negative curvature.

As an example, in \cite{bmm1} Brock, Masur and Minsky introduced a notion of
an {\em ending lamination} for WP geodesic rays, an analogue of the
vertical foliation associated a Teichm\"{u}ller geodesic ray. They
proved that the ending laminations parametrize the strong asymptote
class of {\em recurrent} WP geodesic rays. Recurrent rays are the rays whose
projection to the moduli space recurs to a compact set infinitely
often. Brock, Masur and Minsky \cite{bmm2} and Modami \cite{wpbehavior} initiated a
systematic study of the behavior of Weil-Petersson geodesics in terms
of their ending laminations and associated {\em subsurface projection
  coefficients}.  Certain diophantine-type conditions for subsurface projection
coefficients give strong control over the trajectories of the
corresponding geodesics.

For example, criteria on these coefficients can be given to guarantee
that geodesics projected to the moduli space stay in a compact part of
the moduli space \cite{bmm2}, recur to a compact part of the moduli space, or
diverge in the moduli space \cite{wpbehavior}. A simple scenario arises from bounding
the subsurface coefficients associated to the ending lamination of all
proper subsurfaces from above, akin to bounded-type irrational
numbers, all of whose continued fraction coefficients are bounded. In this
{\em bounded type} case the projection of the corresponding WP
geodesic to the moduli space stays in a compact subset; we say the
geodesic is {\em co-bounded}.

In this paper we prove 
\begin{thm}\label{thm : recurnue}
There are Weil-Petersson geodesic rays in the Teichm\"{u}ller space with minimal,
filling, non-uniquely ergodic ending lamination whose
projections to the moduli space are recurrent. Moreover, these rays are not contained in any compact subset of the moduli space.
\end{thm}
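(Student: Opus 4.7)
The plan is to build the ray by hand as a fellow-traveler of an explicit sequence of markings, arranged in alternating blocks, and then invoke the structural results of \cite{bmm1, bmm2, wpbehavior} to turn this combinatorial data into a WP geodesic ray with the desired properties. I would first fix a proper subsurface $Y \subset S$ large enough to support a non-uniquely ergodic lamination (so $Y$ a once-punctured torus or four-holed sphere). Following the Thurston--Masur template, I would then choose two simple closed curves in $Y$ and iterate a Dehn-twist construction with a rapidly growing sequence of twist powers $\{a_n\}$, producing two sequences of curves in $Y$ whose projective limits give distinct ergodic transverse measures $\mu_1, \mu_2$ supported on a single minimal, filling lamination $\lambda$ on $Y$ (which, after capping off the boundary of $Y$, becomes a minimal filling non-uniquely ergodic lamination on $S$).

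Second, I would intersperse the twisting phases with \emph{cobounded} phases: long stretches of the marking path whose successive markings differ by elementary moves keeping all subsurface projections bounded on $S$ and on all proper subsurfaces, e.g.\ by cycling through curves in a bounded region of the curve complex $\mathcal{C}(S)$ and $\mathcal{C}(Y)$. The resulting infinite hierarchical marking path $\{\mu_n\}$ encodes a sequence of pants decompositions $P_n$; by the hierarchy-type description of WP rays in \cite{bmm1,wpbehavior}, pinching along $P_n$ traces out a WP geodesic ray $r$ whose ending lamination equals $\lambda$.

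Third, I would verify the two trajectory properties. During each cobounded block, bounded subsurface coefficients along the segment of the hierarchy force that portion of $r$ into a compact subset of moduli space, by the cobounded criterion of \cite{bmm2}; by making infinitely many such blocks I obtain recurrence to a single compact set $K \subset \mathcal{M}(S)$. During each twisting block the large subsurface projection in $Y$ (equivalently, the growing twist power $a_n$) forces a boundary curve of $Y$ to become arbitrarily short along $r$, by the geometric control of curve lengths in terms of subsurface coefficients; this pushes $r$ out of every compact subset of $\mathcal{M}(S)$, giving the moreover clause.

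The main obstacle is the simultaneous tuning of two competing conditions. For non-unique ergodicity one needs the twist powers $a_n$ to grow fast enough that the Masur-style criterion for distinct limit measures is satisfied, while for recurrence one needs the intervening cobounded blocks to be long and tame enough to dominate the geometry of $r$ for positive-length stretches of the WP parameter. Additionally, one must ensure that the pants curves appearing along the hierarchy get pinched at rates compatible with the WP geodesic actually existing as an infinite ray (not running into the completion strata in finite time) and having $\lambda$ as its genuine ending lamination in the sense of \cite{bmm1}, rather than some strict sublamination. Balancing these three quantitative inputs — growth of twists for NUE, boundedness in cobounded blocks, and controlled pinching for existence of the ray — is the core of the argument.
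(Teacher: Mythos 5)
Your proposal has the right general flavor (twisting to produce non-unique ergodicity, bounded subsurface coefficients to force recurrence, large coefficients to force excursions), but the architecture has genuine gaps that would break the argument.

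First, the lamination you construct cannot be the ending lamination the theorem requires. A lamination supported in a proper essential subsurface $Y \subsetneq S$ is never filling on $S$: the curve $\partial Y$ (and any curve disjoint from $Y$) has zero intersection number with it, and ``capping off the boundary of $Y$'' produces a lamination on a different, smaller surface, not on $S$. Worse, your candidate subsurfaces have $\xi(Y)=1$ (once-punctured torus, four-holed sphere), and on such surfaces every minimal filling lamination is uniquely ergodic (the associated interval exchanges are rotations), so no non-uniquely ergodic example lives there at all; this is why the paper notes Gabai's construction requires $\xi(S)>1$. The paper avoids both problems by running the Gabai/Leininger--Lenzhen--Rafi construction on $S_{0,5}$ itself (then lifting to higher genus by orbifold covers): the twisting curves $\hat\gamma_i = f_1\circ\cdots\circ f_i(\hat\gamma_0)$, with $f_i=\mathcal{D}^{e_i}\circ\rho$, form a quasi-geodesic in $\mathcal{C}(S)$, so their limit is minimal and filling on the whole surface, and the rapid growth of the $e_i$ yields non-unique ergodicity.

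Second, the alternating ``cobounded block / twisting block'' mechanism is the wrong way to reconcile recurrence with excursions, and as stated it is self-defeating: if the end invariants had unbounded projection to a fixed non-annular proper subsurface $Y$, the non-annular bounded combinatorics hypothesis --- exactly what the stability theorem of \cite{bmm2} and the recurrence argument require --- would fail, and you would lose the control that returns the ray to the thick part. The resolution in the paper is that \emph{all} non-annular coefficients of the pair $(\nu^-,\nu^+)$ are uniformly bounded (Theorems \ref{thm : nabddcomb} and \ref{thm : nueg0}), which gives recurrence of the whole ray via a geodesic-limit argument (Theorem \ref{thm : recurrence}), while the only unbounded coefficients are \emph{annular}: $d_{\gamma_i}(\nu^-,\nu^+)\asymp e_{i-1}\to\infty$ along a sequence of distinct curves going to infinity in $\mathcal{C}(S)$. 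These force $\ell_{\gamma_i}(r(b_i))\to 0$ (Theorem \ref{thm : excursion}) and hence non-containment in any compact set. There are no separate cobounded phases: the twisting that produces non-unique ergodicity is itself the progress toward the ending lamination, not a detour into a fixed subsurface.
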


The theorem sits in contrast with the following result of H. Masur about
Teichm\"{u}ller geodesic rays with (minimal) non-uniquely ergodic
vertical foliation. Note that a Teichm\"{u}ller geodesic ray starting
at a point $X$ in the Teichm\"{u}ller space is determined by a unique
holomorphic quadratic differential on $X$. For the description of Teichm\"{u}ller geodesics in terms of holomorphic quadratic differentials and the associated vertical and horizontal measured foliations see e.g. \cite{rshteich}.

\begin{thm}\textnormal{(Masur's criterion)} \cite[Theorem 1.1]{masurcriterion} \label{thm
    : masurcrit} Suppose that the vertical foliation of a quadratic
  differential determining a Teichm\"{u}ller geodesic ray is 
  not uniquely ergodic. Then the Teichm\"{u}ller geodesic is
  divergent in the moduli space.
\end{thm}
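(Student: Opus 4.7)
The plan is to prove Theorem 1.2 by contraposition: assuming the Teichm\"uller geodesic ray $\{X_t = g_t X\}_{t\geq 0}$ generated by the quadratic differential $q$ has a subsequence $X_{t_n}$ remaining in a compact subset $K$ of moduli space with $t_n \to \infty$, I would show the vertical foliation $\mathcal{F}_v$ of $q$ is uniquely ergodic.

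The first step is to harvest short curves. By Mumford's compactness criterion the injectivity radius on $K$ is bounded below, so for each $n$ there is a simple closed curve $\alpha_n$ with uniformly bounded hyperbolic length on $X_{t_n}$. On the compact set $K$ the hyperbolic length and the flat length in the associated quadratic differential $q_{t_n} := g_{t_n} q$ are comparable, so the flat length of $\alpha_n$ in $|q_{t_n}|$ is uniformly bounded by some constant $C = C(K)$.

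The second step is to read off intersection estimates from the scaling of the Teichm\"uller flow. Under $g_t$ the transverse measure of the horizontal foliation is multiplied by $e^t$ and that of the vertical foliation by $e^{-t}$, so the horizontal component of the flat length of $\alpha_n$ in $q_{t_n}$ is $e^{t_n} i(\alpha_n, \mathcal{F}_v)$ and the vertical component is $e^{-t_n} i(\alpha_n, \mathcal{F}_h)$. Each component is dominated by the total flat length, yielding
\[
e^{t_n}\, i(\alpha_n, \mathcal{F}_v) \leq C, \qquad e^{-t_n}\, i(\alpha_n, \mathcal{F}_h) \leq C,
\]
and in particular $i(\alpha_n, \mathcal{F}_v) \to 0$ exponentially fast. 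Using compactness of the space $\mathcal{PMF}(S)$ of projective measured foliations and continuity of the geometric intersection pairing, a subsequence of $[\alpha_n]$ converges to some $[\mu] \in \mathcal{PMF}(S)$ with $i(\mu, \mathcal{F}_v) = 0$. Minimality of $\mathcal{F}_v$ then forces $\mu$ to lie in the finite-dimensional simplex of transverse measures supported on the same underlying topological foliation as $\mathcal{F}_v$.

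The principal difficulty, and the core of Masur's original argument, is to conclude that this simplex is a single point and hence that $\mathcal{F}_v$ is uniquely ergodic. I would approach this by tracking quantitative intersection pairings between $\alpha_n$ and each ergodic component of $\mathcal{F}_v$: if $\mathcal{F}_v$ admitted two ergodic transverse measures $\mu_1, \mu_2$ that are not projectively proportional, then one could find simple closed curves detecting $\mu_1$ and $\mu_2$ separately, and the bounded-flat-length estimates above applied with $\mathcal{F}_v$ replaced by $\mu_1$ and $\mu_2$ would force two different decay rates for $i(\alpha_n,\mu_1)$ and $i(\alpha_n,\mu_2)$ along the recurrent times $t_n$, contradicting the fact that both are dominated by $i(\alpha_n, \mathcal{F}_v)$. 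The combinatorial input needed to produce such separating curves uses Masur's analysis of horocyclic neighborhoods and the flat geometry around the zeros of $q$; this geometric step is where I expect the main work, and the one that has no clean analogue in the Weil--Petersson setting considered later in this paper.
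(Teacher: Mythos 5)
The paper does not prove this statement at all: it is quoted verbatim from Masur with a citation, so there is no internal proof to compare against. Judged on its own terms, your sketch reproduces the standard ``easy half'' of the argument correctly --- Bers/Mumford give curves $\alpha_n$ of bounded flat length at the recurrence times $t_n$, the scaling of the Teichm\"uller flow gives $i(\alpha_n,\mathcal{F}_v)\le Ce^{-t_n}$, and a projective limit of the $\alpha_n$ is a measure with zero intersection with $\mathcal{F}_v$, hence (by minimality) a point of the simplex of transverse measures on $|\mathcal{F}_v|$. None of that is in dispute, but none of it touches unique ergodicity: producing \emph{one} element of the simplex as a limit of the $\alpha_n$ says nothing about whether the simplex is a point.

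The genuine gap is your final step, and the specific mechanism you propose does not work. If $\mathcal{F}_v=a_1\mu_1+a_2\mu_2+\cdots$ is the ergodic decomposition with $a_j>0$, then linearity of the intersection pairing in the transverse measure gives
$$i(\alpha_n,\mu_j)\;\le\;a_j^{-1}\,i(\alpha_n,\mathcal{F}_v)\;\le\;a_j^{-1}Ce^{-t_n}$$
for \emph{every} $j$ simultaneously. So both $i(\alpha_n,\mu_1)$ and $i(\alpha_n,\mu_2)$ decay at rate $O(e^{-t_n})$, there is no forced discrepancy of decay rates, and no contradiction arises. The entire content of Masur's criterion is concentrated in exactly the step you defer, and it is not obtainable by refining these intersection estimates. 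Masur's actual argument runs in the direct (not contrapositive) direction: starting from two distinct ergodic measures he partitions the surface into regions swept out by vertical leaves generic for the respective measures, and shows that the boundaries of these regions can be realized by essential curves whose flat length in $g_tq$ tends to zero as $t\to\infty$, which exhibits divergence outright. (Alternatively, a contrapositive proof can be completed by passing to a limit $q_{t_n}\to q_\infty$ in the bundle of unit-area quadratic differentials over the compact set and arguing that every normalized transverse measure on $|\mathcal{F}_v|$ is pushed to the same limit, but that too requires a real argument, not the decay-rate comparison.) As written, your proposal proves only that recurrence forces $i(\mu,\mathcal{F}_v)=0$ for some limit $\mu$ supported on $|\mathcal{F}_v|$, which is automatic and does not imply unique ergodicity.
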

\begin{remark}
 Masur states and proves the theorem with the assumption that the vertical foliation is minimal. The same argument for each minimal component of the vertical foliation gives the Theorem \ref{thm : masurcrit}.  
\end{remark}
The contrapositive of the above theorem ensures that the vertical
foliation (lamination) of a recurrent Teichm\"{u}ller geodesic is
uniquely ergodic. Comparing this fact and Theorem \ref{thm : recurnue}
exhibits an essential disparity between how the behavior of
a Teichm\"{u}ller geodesic is encoded in its vertical foliation
(lamination) and how the behavior of a Weil-Petersson geodesic is
encoded in its forward ending lamination.

\begin{remark} 
We remark that the methods here use explicit strong
control over the family of geodesics in the Weil-Petersson metric with
{\em bounded non-annular combinatorics} \cite{bmm2}. We remark that in
the low-complexity cases of the five-holed sphere and two-holed
torus, the more complete control over Weil-Petersson geodesics
obtained in \cite{bm} allows one to apply Theorem \ref{thm :
  geodlimit} to show that any Weil-Petersson geodesic with a filling
ending lamination is recurrent.  In this setting, then the mere
existence of non-uniquely ergodic filling laminations shows the
failure of Masur's Criterion in this setting.  Here, we have chosen an
explicit constructive approach that naturally generalizes to higher
genus cases.
\end{remark}

\bold{Acknowledgement:} We would like to thank Chris Leininger and
Kasra Rafi for very useful discussions. We would also like to thank
Yair Minsky for continued support and encouragement.

\section{Background}
In this paper we use the following notation:
\begin{notation}
Let $f,g:X\to \mathbb{R}^{\geq 0}$ be two function. Let $K\geq 1$ and $C\geq 0$ be two constants. We denote $f\asymp_{K,C} g$ if 
$$\frac{1}{K}g(x)-C\leq f(x)\leq Kg(x)+C$$
holds for every $x\in X$.
\end{notation}
\subsection{The Curve Complex}\label{subsec : cc}
Let $S=S_{g,n}$ be a finite type, orientable surface with genus $g$ and
$n$ punctures or boundary components. We define the complexity of the surface by
$\xi(S):=3g-3+n$. The curve complex of $S$ denoted by $\mathcal{C}(S)$
is a flag complex. When $\xi(S)>1$: Each vertex in the complex is the
isotopy class of an essential, simple closed curve. An {\it essential curve} is a curve which is not isotopic to a point, puncture or a boundary component of $S$. An edge
corresponds to a pair of isotopy classes of simple closed curves with disjoint representatives
on the surface. The curve complex is the flag complex obtained
from the first skeleton i.e we have a $k$ dimensional simplex
corresponding to any $k+1$ vertices with an edge between any pair of
them. Assigning length one to each edge makes the first skeleton of
the curve complex a metric graph. When $\xi(S)=1$, $S$ is a four-holed
sphere or a one-holed torus. The definition of the curve complex is the
same, except disjoint representative is replaced with 
intersection number $2$ or $1$, respectively.  

An {\em essential subsurface} of $S$ is a connected, closed, properly embedded subsurface
 $Y\subseteq S$, so that each boundary curve of $Y$ is either an essential simple closed curve of $S$ or a boundary curve of $S$, and $Y$ itself is not three-holed sphere.  We frequently
consider the inclusion of subcomplexes $\mathcal{C}(Y) \subseteq \mathcal{C}(S)$
induced by restriction.

For an essential annular subsurface $Y$ with core curve $\alpha$, the curve complex has a
slightly more involved definition, but a simple model: it is
quasi-isometric to $\mathbb{Z}$.  Formally, let $\langle \alpha\rangle$ be the cyclic subgroup of $\pi_{1}(S)$ generated by $\alpha$ acting on the Poincar\'{e} disk  $\mathbb{D}^{2}$ the universal cover of $S$. Let $\widetilde{Y}=\mathbb{D}^{2}/\langle\alpha\rangle$ be the annular cover of $S$ to which $Y$ lifts homeomorphically. Let $\widehat{Y}=\mathbb{D}^{2}\cup\Omega_{\alpha}/\langle\alpha\rangle$ be the natural compactification of $\widetilde{Y}$, where $\Omega_{\alpha}$ is the complement of the fixed points of $\alpha$ acting on the circle at infinity of $\mathbb{D}^{2}$. Each vertex of $\mathcal{C}(Y)$ corresponds to
the homotopy class of an arc connecting the two boundaries of
$\widehat{Y}$ relative to the boundary. There is an edge between any
two vertices corresponding to arcs with disjoint interior. We denote $\mathcal{C}(Y)$ by $\mathcal{C}(\alpha)$ as well. For more
detail see $\S 2$ of \cite{mm2}.

 We do not distinguish between the isotopy class of closed curve and any curve representing the class. A {\it multi-curve} is a collection of pairwise disjoint simple closed curves. 
 
Masur-Minsky \cite{mm1} showed that the curve complex of a surface
 $S$ is $\delta-$hyperbolic where $\delta$ depends only on the
 topological type of the surface.  Indeed, it has recently been shown
 that $\delta$ is universal, and can be taken to be the constant $17$,
 \cite{unicorns} (see alos \cite{aougabuhcc}).

\begin{notation}
We say that curves $\alpha,\beta\in \mathcal{C}_{0}(S)$ {\em overlap}
if $\alpha$ and $\beta$ cannot be realized by disjoint curves on $S$. If
$\alpha$ and $\beta$ overlap we say that $\alpha \pitchfork \beta$ holds. A
curve $\alpha$ overlaps a subsurface $Y$ if $\alpha$ can not be
realized disjoint from $Y$; we denote it by $\alpha\pitchfork Y$. Multi-curves $\sigma$ and $\sigma'$
overlap if some $\alpha\in \sigma$ and some $\alpha'\in\sigma'$
overlap. Similarly a multi-curve $\sigma$ and a subsurface $Y$
overlap if some $\alpha\in \sigma$ and $Y$ overlap. 

Let $Y$ and $Z$ be essential subsurfaces. We say that $Y$ and $Z$ {\em
  overlap} if
$\partial{Z}\pitchfork Y$ and $\partial{Y}\pitchfork Z$ hold. 
\end{notation}

\noindent{\bf Pants decompositions and markings:} A {\em pants decomposition} $P$ is a multi-curve with maximal number of curves $\xi(S)$. A {\em (partial) making} $\mu$ consists of a pants deposition $P$ and $t_\alpha$ a diameter $1$ subset of $\mathcal{C}_{0}(\alpha)$ for (some) all $\alpha\in P$. The subset of $\mathcal{C}_{0}(\alpha)$ can be represented by transversal curves to $\alpha$ on $S$. We call $P$ the base of the marking and denote it by $\base(\mu)$. 

The {\it pants graph} of $S$ denoted by $P(S)$ is a graph with vertices the pants decompositions. An edge is between any two pants decompositions that differ by an elementary move. An elementary move on a pants decomposition $P$ fixes all the curves and replaces one curve $\alpha$ with a curve in $S\backslash\{P-\alpha\}$ whose intersection number with $\alpha$ is $1$ if $S\backslash\{P-\alpha\}$ is a one-holed torus, and is $2$ if $S\backslash\{P-\alpha\}$ is a four-holed sphere. Assigning length one to each edge we obtain a metric graph.   

\bold{Laminations and foliations:} Fix a complete hyperbolic metric on
$S$. A {\it geodesic lamination} $\lambda$ is a closed subset of
$S$ consisting of disjoint, complete, simple geodesics. Each one of the geodesics is called a leaf of $\lambda$. Let $\widetilde{S}=\mathbb{D}^{2}$ be the universal cover of $S$. Denote the circle at infinity of the Poincar\'{e} disk $\mathbb{D}^{2}$ by $\widetilde{S}_{\infty}$. Let $M_{\infty}(S)$ denote $(\widetilde{S}_{\infty}\times \widetilde{S}_{\infty}\backslash \Delta)/\sim$, where $\Delta$ is the diagonal and $\sim$ is the equivalence relation generated by $(x,y)\sim(y,x)$. Since the geodesics in $\mathbb{D}^{2}$ are parametrized by points of $M_{\infty}$ the preimage of a geodesic lamination determines a closed subset of $M_{\infty}(S)$ which is invariant under the action of $\pi_{1}(S)$.
We denote the space of
geodesic laminations on $S$ equipped with the Hausdorff topology
of closed subsets of $M_{\infty}(S)$ by $\mathcal{GL}(S)$. The space $\mathcal{GL}(S)$ is a compact space. For more detail see $\S$I.4 of \cite{notesonthurston}. A {\it transverse
measure} $m$ on $\lambda$ is a measure on the set of arcs on $S$
 which is invariant under isotopies of $S$
preserving $\lambda$. The measure of an arc $a$ such that $a\subset\lambda$ or $a\cap\lambda=\emptyset$ is $0$ and otherwise the measure of $a$ is positive. A pair of a geodesic lamination $\lambda$ and a
transverse measure $m$ of $\lambda$ is a {\it measured (geodesic) lamination},
denoted by $\mathcal{L}=(\lambda,m)$. We say that $\lambda$ is the
{\em support} of the measured lamination. We denote the space of measured
laminations of $S$ equipped with the weak$^{*}$ topology by
$\mathcal{ML}(S)$. The space of projective measured laminations
$\mathcal{PML}(S)$ is the quotient of $\mathcal{ML}(S)$ with the
natural action of $\mathbb{R}^{+}$ rescaling the measures equipped with
the quotient topology. For any $\mathcal{L}\in\mathcal{ML}(S)$, let $[\mathcal{L}]$ denote the projective class of $\mathcal{L}$.

A geodesic lamination $\lambda$ is {\it minimal} if every leaf of
$\lambda$ is dense in $\lambda$. The geodesic lamination $\lambda$
{\em fills the surface} $S$ or is {\em filling} if $S\backslash \lambda$ is the
union of topological disks and annuli with core curve isotopic to a boundary curve of $S$. Equivalently, if for any simple
closed curve $\alpha$, and any transverse measure $m$ on $\lambda$, we have
$i(\alpha,(\lambda,m))>0$. Here 
$$i:\mathcal{ML}(S)\times\mathcal{ML}(S)\to \mathbb{R}^{\geq 0}$$
 denotes the natural extension of the intersection number of curves to the space of measured geodesic laminations; see \cite{bonlam}. 

Given $[\mathcal{L}] \in \mathcal{PML}(S)$, let $|\mathcal{L}|$
be the support of $\mathcal{L}$.  Then taking the quotient 
$$\mathcal{PML}(S)/|.|$$ of $\mathcal{PML}(S)$ by forgetting the measure,
the {\em ending lamination space} 
$$\mathcal{EL}(S)\subset \mathcal{PML}(S)/|.|$$ 
is the image of projective measured laminations with minimal filling support equipped with the quotient topology of the topology of $\mathcal{PML}(S)$.
\medskip

Recall that the curve complex of $S$ is a $\delta-$hyperbolic space. The following result of Klarriech describes the Gromov
boundary of the curve complex. 
\begin{prop}\cite{bdrycc}\label{prop : bdrycc}
  There is a homeomorphism $\Phi$ from the Gromov boundary of
  $\mathcal{C}(S)$ equipped with its standard topology to
  $\mathcal{EL}(S)$. Let $\{\alpha_{i}\}_{i=0}^{\infty}$ be a sequence of
  curves in $\mathcal{C}_{0}(S)$ that converges to a point $x$ in the
  Gromov boundary of $\mathcal{C}(S)$. Regarding each $\alpha_{i}$ as
  a projective measured lamination, any accumulation point of the sequence
  $\{\alpha_{i}\}_{i=0}^{\infty}$ in $\mathcal{PML}(S)$ is supported on $\Phi(x)$.
\end{prop}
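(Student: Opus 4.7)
The plan is to construct $\Phi$ by taking a minimal filling lamination $\lambda$ and showing that \emph{any} sequence of simple closed curves approximating $\lambda$ in $\mathcal{PML}(S)$ converges to a single point on the Gromov boundary, then verify that this assignment is a homeomorphism. The essential input is the Hempel-type estimate $d_{\mathcal{C}(S)}(\alpha,\beta)\leq 2\log_{2} i(\alpha,\beta)+2$, combined with Masur--Minsky hyperbolicity of $\mathcal{C}(S)$, which converts intersection-number data into curve complex distance data.

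First I would define $\Phi^{-1}$. Given $\lambda\in\mathcal{EL}(S)$, fix some measure $m$ on $\lambda$ and pick simple closed curves $\alpha_{i}$ with $[\alpha_{i}]\to [(\lambda,m)]$ in $\mathcal{PML}(S)$. To see that $\{\alpha_{i}\}$ is a Gromov sequence in $\mathcal{C}(S)$, I would argue by contradiction: if $(\alpha_{i}\mid \alpha_{j})_{\alpha_{0}}$ were bounded along a subsequence, hyperbolicity of $\mathcal{C}(S)$ would produce a simple closed curve $\gamma$ with $d_{\mathcal{C}(S)}(\gamma,\alpha_{i})$ bounded. Combined with the Hempel estimate in its refined form (bounding $i(\gamma,\alpha_{i})/\ell_{X}(\alpha_{i})$ whenever the distance is bounded), this forces $i(\gamma,\mathcal{L})=0$, contradicting the filling hypothesis on $\lambda$. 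Hence $\{\alpha_{i}\}$ has a well-defined Gromov limit $x\in\partial\mathcal{C}(S)$, and a parallel argument comparing two approximating sequences shows $x$ is independent of the choice of approximation and of the measure $m$, giving a well-defined map $\Psi:\mathcal{EL}(S)\to\partial\mathcal{C}(S)$.

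Next I would show $\Psi$ is a bijection, which will let me define $\Phi=\Psi^{-1}$ and simultaneously verify the accumulation statement. For surjectivity, given $x\in\partial\mathcal{C}(S)$ choose $\alpha_{i}\to x$ and pass to a subsequence converging in $\mathcal{PML}(S)$ to some $[\mathcal{L}]$. If the support $|\mathcal{L}|$ failed to fill, some simple closed curve $\gamma$ would be disjoint from $|\mathcal{L}|$, giving $i(\alpha_{i},\gamma)/\ell(\alpha_{i})\to 0$, hence by the Hempel bound $d_{\mathcal{C}(S)}(\alpha_{i},\gamma)$ would be bounded, contradicting $\alpha_{i}\to\partial\mathcal{C}(S)$; minimality follows because two distinct minimal sublaminations would be separated by a disjoint curve, yielding the same obstruction. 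Injectivity of $\Psi$ is symmetric: for two distinct minimal filling laminations $\lambda\neq\lambda'$, approximating sequences $\alpha_{i}\to\lambda$ and $\alpha_{i}'\to\lambda'$ have $i(\alpha_{i},\alpha_{i}')$ growing subexponentially compared to the product $\ell(\alpha_{i})\ell(\alpha_{i}')$ only when their Gromov limit is the same, which fails in the present case; the accumulation clause in the proposition then drops out of this same argument.

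The hard part will be the two-sided continuity of $\Phi$. Continuity of $\Phi$ follows by a diagonal argument: if $x_{n}\to x$ in $\partial\mathcal{C}(S)$, represent each $x_{n}$ by an approximating sequence of curves, use the uniform hyperbolicity constant $\delta$ to bound how far along each approximation one must go, and extract measured laminations supported on $\Phi(x_{n})$ whose Hausdorff limits lie in $\Phi(x)$. Continuity of $\Phi^{-1}$ is the subtler direction: from $\lambda_{n}\to\lambda$ in $\mathcal{EL}(S)$ (quotient topology from $\mathcal{PML}(S)$) one needs that the corresponding Gromov boundary points converge, and the main technical obstacle is to prevent intersection-number blow-up from generating spurious accumulation at non-filling supports along diagonal sequences. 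The standard way around this is train-track carrying: choose a birecurrent train track $\tau$ carrying $\lambda$, note that eventually $\tau$ also carries $\lambda_{n}$, and carrying together with the Hempel bound gives uniform bounds on Gromov products $(\alpha_{i}^{n}\mid \alpha_{i})_{\alpha_{0}}$ forcing the desired convergence in $\partial\mathcal{C}(S)$.
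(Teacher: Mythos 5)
The paper does not prove this proposition: it is quoted verbatim as Klarreich's theorem on the Gromov boundary of the curve complex, with a citation and no argument. So there is no ``paper proof'' to compare against; your proposal has to be judged as a from-scratch proof of Klarreich's theorem, and as such it has genuine gaps.

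The central problem is that you lean on the Hempel estimate $d_{\mathcal{C}(S)}(\alpha,\beta)\leq 2\log_{2}i(\alpha,\beta)+2$ in places where it points the wrong way. That inequality bounds curve-complex distance \emph{from above} by intersection number; it can show that curves with small intersection are close, but it can never show that a sequence escapes to infinity in $\mathcal{C}(S)$ or that Gromov products diverge. The step you actually need --- if $\alpha_{i}\to[\mathcal{L}]$ in $\mathcal{PML}(S)$ and $d_{\mathcal{C}(S)}(\gamma,\alpha_{i})$ stays bounded, then $|\mathcal{L}|$ does not fill --- is a separate theorem (Masur--Minsky's nested-support induction on the distance $N$, reproved by Klarreich), and your parenthetical ``refined form'' of Hempel bounding $i(\gamma,\alpha_{i})/\ell_{X}(\alpha_{i})$ is not a known estimate and does not follow from the logarithmic bound. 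Without this lemma, neither your construction of $\Psi$ nor your surjectivity argument gets off the ground.

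The second, and more serious, gap concerns exactly the clause of the proposition that the paper actually uses: that \emph{every} $\mathcal{PML}$-accumulation point of a single Gromov-convergent sequence has the \emph{same} support $\Phi(x)$. You assert this ``drops out'' of the injectivity argument, but injectivity compares two sequences chosen to converge in $\mathcal{PML}(S)$ to two prescribed laminations; it says nothing about a single sequence whose $\mathcal{PML}$-accumulation set might a priori contain projective classes with two different minimal filling supports. Ruling that out is the hard core of Klarreich's theorem, and every known proof invokes substantial machinery at this point: Klarreich routes the Gromov sequence along the systole image of a Teichm\"{u}ller geodesic whose vertical foliation realizes the lamination, Hamenst\"{a}dt uses full train-track splitting sequences, and Pho-On uses unicorn paths. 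Your sketch contains no mechanism that couples the coarse curve-complex convergence to the measured-lamination topology tightly enough to force uniqueness of the support, and the claimed ``subexponential growth of $i(\alpha_{i},\alpha_{i}')$ compared to $\ell(\alpha_{i})\ell(\alpha_{i}')$'' criterion for having the same Gromov limit is not a correct characterization. The continuity arguments in your last paragraph inherit the same problem: ``carrying together with the Hempel bound gives uniform bounds on Gromov products'' is a restatement of what must be proved, not a proof.
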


A {\it singular foliation} $\mathcal{F}$ on $S$ is a foliation of the
complement of a finite set of points in $S$ called singular points. At a regular (not a singular) point
$\mathcal{F}$ is locally modeled on an open set $U\subset \mathbb{C}$ containing the origin whose leaves are the
horizontal coordinate lines. More precisely, there is a coordinate chart $x+iy$ such that the leaves of
$\mathcal{F}$ are the trajectories given by $y=\text{constant}$. At
singular points the foliation is locally modeled on an open set $U\subset \mathbb{C}$ containing the origin whose leaves are the trajectories along which the real valued $1-$form $\I (\sqrt{z^{k}dz^{2}})$ vanishes, where $k\in\mathbb{N}$. The singular point is mapped to the origin. A foliation is {\it minimal} if any half leaf of the foliation is dense in the surface.

A {\it transverse measure} on a singular foliation $\mathcal{F}$ is a measure on the
collection of arcs in the surface transversal to $\mathcal{F}$ which
is invariant under isotopies of the surface that preserve the
foliation. 

A pair consisting of a foliation and a transverse measure
on the foliation is a {\it measured foliation}. Given a foliation $\mathcal{F}$,
let $x+iy$ be a coordinate chart as above. Then $|dy|$ defines a transverse measure
on $\mathcal{F}$.

We denote the space of measured foliations of the surface $S$ equipped with the weak$^{*}$ topology by $\mathcal{MF}(S)$. For more detail see expos\'{e} $5$ of \cite{FLP}. 
\medskip

There is a one to one correspondence between measured laminations and measured
foliation up to Whitehead moves and isotopies of foliations on a surface
\cite{fol=lam}. A lamination is minimal if and only if the
corresponding foliation is minimal, see \cite[Theorem 2]{fol=lam}.
\medskip

\bold{Subsurface coefficients:} Let $Y\subseteq S$ be an essential non-annular
subsurface. Masur and Minsky \cite{mm1} define {\it subsurface projection map} 
$$\pi_{Y}:\mathcal{GL}(S)\to \mathcal{PC}_{0}(Y)$$
that assigns to $\lambda \in \mathcal{GL}(S)$ the subset of $\mathcal{C}_0(S)$ denoted by $\pi_Y(\lambda)$ as follows:
 Fix a complete hyperbolic metric on $S$ and realize $\lambda$ and $\partial{Y}$ geodesically. If $\lambda$ does not intersect $Y$, then define $\pi_{Y}(\lambda)=\emptyset$. Now suppose that $\lambda$ intersects $Y$. Let $\lambda\cap Y$ be the intersection locus of $\lambda$ and the subsurface $Y$. Consider isotopy classes of arcs in $\lambda\cap Y$ with end points on $\partial{Y}$ or at cusps of the hyperbolic surface, where the end points of arcs are allowed to move in $\partial{Y}$. For any arc $a$ (up to isotopy) take the essential boundary curves of a regular neighborhood of $a\cup\partial{Y}$ in $Y$. The union of these curves where we select one arc from each isotopy class and the closed curves in $\lambda\cap Y$ is $\pi_{Y}(\lambda)$. Note that the diameter of $\pi_{Y}(\lambda)$ viewed as a subset of $\mathcal{C}(Y)$ is at most $2$. 

Let $Y$ be an essential annular subsurface with core curve $\alpha$.  Denote the natural compactification of the annular cover of $S$ to
which $Y$ lifts homeomorphically by $\widehat{Y}$. Given a geodesic lamination $\lambda$, the projection of $\lambda$ to $Y$ is the set of component arcs of the lift of $\lambda$ to $\widehat{Y}$ which connect the two boundaries of $\widehat{Y}$. We denote the projection map by either $\pi_{Y}$ or $\pi_{\alpha}$. For more detail see \cite[\S 2]{mm2}. 

Note that since $\mathcal{C}_{0}(S)\subset \mathcal{GL}(S)$, we have in particular the subsurface projection map
$$\pi_{Y}:\mathcal{C}_{0}(S)\to \mathcal{PC}_{0}(Y).$$

Given a multi-curve $\sigma$ and an essential subsurface $Y$, the projection of $\sigma$ onto $Y$ is the union of the projections $\pi_{Y}(\alpha)$ where $\alpha\in\sigma$. For a partial marking $\mu$ if the subsurface $Y$ is not an annulus with core curve in $\base(\mu)$, then $\pi_{Y}(\mu)=\pi_{Y}(\base(\mu))$. If $Y$ is an annulus with core curve $\alpha\in\base(\mu)$, then $\pi_{Y}(\mu)$ is the set of transversal curves to $\alpha$ in $\mu$. 
\medskip

Let $\mu$ and $\mu'$ be either partial markings or laminations. Let $Y\subseteq S$ be an essential subsurface. The $Y$
\textit{subsurface coefficient} of $\mu$ and $\mu'$ is defined by 
$$d_{Y}(\mu,\mu'):=\min\{d_{Y}(\gamma,\gamma') : \gamma\in\pi_{Y}(\mu), \gamma'\in\pi_{Y}(\mu')\}.$$ 
When $Y$ is an annular subsurface with core curve $\alpha$ we denote $d_{Y}(.,.)$ by $d_{\alpha}(.,.)$ as well.

Lemma 2.1 of \cite{mm1} gives us the following bound on the subsurface coefficient of two curves in terms of their intersection number,
\begin{equation}\label{eq : dYi}d_{Y}(\alpha,\beta)\leq 2i(\alpha,\beta)+1.\end{equation}

Let $\alpha,\beta\in\mathcal{C}_{0}(S)$ and $Y\subseteq S$ be an essential subsurface. If $d_{Y}(\alpha,\beta)>2$, then $\alpha\pitchfork\beta$ holds. To see this, first suppose that $Y$ is non-annular. Recall the surgery map which assigns to any arc in $Y$ with end points on $\partial{Y}$ the set of curves in the boundary of a regular neighborhood of $a\cup\partial{Y}$. This map from the arc complex of $Y$ to $\mathcal{PC}_{0}(Y)$ is $2-$Lipschitz, see \cite{mm2}. Let $a$ be an arc in $\alpha\cap Y$ and $b$ be an arc in $\beta\cap Y$ with end points in the boundary of $Y$. The assumption $d_{Y}(\alpha,\beta)>2$ and the fact that the surgery map is $2-$Lipschitz imply that $a$ and $b$ have arc complex distance at least $2$. Thus the arcs $a$ and $b$ intersect, and therefore the curves $\alpha$ and $\beta$ intersect. Now suppose that $Y$ is an annular subsurface. Then $d_{Y}(\alpha,\beta)>2$ implies that the interior of any two lifts of $\alpha$ and $\beta$ to the compactified annular cover $\widehat{Y}$ that go between two boundary components of $\widehat{Y}$ intersect. Therefore, $\alpha$ and $\beta$ intersect. 

The following lemma is a consequence of \cite[Lemma 2.3]{mm2}.

\begin{lem}\label{lem : diamproj}
Let $\mu$ denote a multi-curve, (partial) marking or lamination on a surface $S$. Then for any essential subsurface $Y\subseteq S$ we have
$$ \diam_{Y}(\mu)\leq 2.$$
When $Y$ is an annulus the sharp upper bound is $1$. 
\end{lem}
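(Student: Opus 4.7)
The plan is to reduce all three cases (multi-curve, marking, lamination) to a single geometric input: disjoint essential arcs in $Y$ produce surgery curves within distance $2$ in $\mathcal{C}(Y)$, and within distance $1$ when $Y$ is an annulus. The key ingredient is the $2$-Lipschitz surgery map from the arc complex of $Y$ to the curve complex $\mathcal{C}(Y)$ recalled in the paragraph preceding the lemma, together with the fact that in $\mathcal{C}(\alpha)$ two spanning arcs of $\widehat{Y}$ with disjoint interiors are adjacent by definition.

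First I would dispose of the multi-curve case. Let $\sigma$ be a multi-curve and let $\gamma_{1},\gamma_{2}\in\pi_{Y}(\sigma)$. By definition each $\gamma_{i}$ arises, in the non-annular case, from the surgery applied to some arc $a_{i}\subseteq \alpha_{i}\cap Y$ with $\alpha_{i}\in\sigma$ and $\alpha_{i}\pitchfork Y$, and in the annular case from a component of the lift of $\alpha_{i}$ to $\widehat{Y}$ joining the two boundary circles. Since the $\alpha_{i}$ are pairwise disjoint on $S$, the arcs $a_{1}$ and $a_{2}$ can be realized disjointly in $Y$ (respectively, as disjoint spanning arcs in $\widehat{Y}$). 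In the non-annular case the $2$-Lipschitz surgery map then gives $d_{Y}(\gamma_{1},\gamma_{2})\leq 2$; in the annular case, disjointness of spanning arcs gives $d_{Y}(\gamma_{1},\gamma_{2})\leq 1$ directly from the definition of $\mathcal{C}(\alpha)$.

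Next, for a (partial) marking $\mu$ with $P=\base(\mu)$: if $Y$ is not an annulus whose core curve lies in $P$, then by definition $\pi_{Y}(\mu)=\pi_{Y}(P)$ and the multi-curve step applies. If $Y$ is the annulus around some $\alpha\in P$, then $\pi_{Y}(\mu)=t_{\alpha}$, which is by definition a subset of $\mathcal{C}_{0}(\alpha)$ of diameter at most $1$. For a geodesic lamination $\lambda$, I would realize $\lambda$ geodesically in a fixed hyperbolic metric on $S$. Since the leaves of $\lambda$ are pairwise disjoint, the arcs of $\lambda\cap Y$ (and their lifts to $\widehat{Y}$ in the annular case) are pairwise disjoint, and the same surgery argument gives the same bounds.

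The only point requiring care is the annular case: one must verify that disjoint simple geodesics on $S$ that both cross $\alpha$ lift to arcs in $\widehat{Y}$ with pairwise disjoint interiors. This follows because lifts to the cyclic cover of disjoint simple closed curves can be chosen disjoint, and each spanning component is a simple arc; then the definition of $\mathcal{C}(\alpha)$ yields distance at most $1$, showing the bound $1$ is sharp in this case. I do not expect a real obstacle here; the argument is essentially the bookkeeping made possible by the Lipschitz surgery map.
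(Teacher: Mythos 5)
Your proof is correct and follows essentially the same route as the paper: the paper deduces the lemma from \cite[Lemma 2.3]{mm2} (whose content is exactly your disjoint-arcs-plus-$2$-Lipschitz-surgery argument, using the Lipschitz property recalled just before the lemma) and justifies the annular refinement with precisely your observation that lifts of disjoint simple curves or leaves to the compactified annular cover have disjoint interiors. You merely unfold the citation, and your handling of the marking case via $t_\alpha$ matches the definitions in the paper.
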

The reason for the second part of the lemma is that any two lifts of two disjoint curves on the surface $S$ to the compactified annular cover corresponding to the annular subsurface $Y\subset S$ are disjoint.
\medskip

Let $\alpha,\beta,\gamma\in \mathcal{C}_{0}(S)$. Farb, Lubotzky and Minsky \cite{rk1mcg} defined the {\em relative twist} of the curves $\beta$ and $\gamma$ with respect to the curve $\alpha$ by
$$\tau_{\alpha}(\beta,\gamma):=\{ b.c : b\in \pi_{\alpha}(\beta), c\in \pi_{\alpha}(\gamma)\}$$ 
 where $b.c$ denotes the algebraic intersection number of the arcs $a$ and $b$. The arcs $b$ and $c$ are oriented so that they intersect the lift of $\alpha$ homotope to the core of $\widehat{Y}$ in the same direction. More precisely, let $\tilde{\alpha}$ be the lift of $\alpha$ homotopic to the core of $\widehat{Y}$ and fix an orientation for $\tilde{\alpha}$. Then $b$ and $c$ are oriented so that the tangents to $\tilde{\alpha}$ and $b$, and the tangents to $\tilde{\alpha}$ and $c$ at their intersection points determine the same orientation for the annulus $\widetilde{Y}$. Note that The subset $\tau_{\alpha}(\beta,\gamma)\subset\mathbb{Z}$ has diameter $2$.

Given arcs $b,c\in\mathcal{C}(\alpha)$, by the discussion in $\S 2.4$ of \cite{mm2}, 
$$d_{\alpha}(b,c)=|b.c|+1.$$
 Let $\beta,\gamma\in\mathcal{C}_{0}(S)$. Since the diameter of $\tau_{\alpha}(\beta,\gamma)$ is at most $2$, by the above formula we have 
\begin{equation}\label{eq : dYiYann}\big|d_{\alpha}(\beta,\gamma)-|x|\big|\leq 3.\end{equation}
 for any $x\in\tau_{\alpha}(\beta,\gamma)$. Let $\gamma=\mathcal{D}_{\alpha}^{e}(\beta)$, where $\mathcal{D}_{\alpha}$ is the positive Dehn twist about $\alpha$ and $e$ is a positive integer. Formula (2) in \cite[\S2]{rk1mcg} is 
\begin{equation}\label{eq : rtdt}\tau_{\alpha}(\beta,\gamma)\subset\{e,e+1\}.\end{equation}

The following inequality proved by Behrstock \cite{beh} relates the subsurface
coefficients of two subsurfaces that overlap. 
\begin{thm}\textnormal{(Behrstock inequality)} \label{thm : beh} 
There is a constant $B_{0}>0$ so that given a curve system $\mu$ and
subsurfaces $Y$ and $Z$ satisfying $Y\pitchfork Z$ we have 
$$\min\{d_{Y}(\partial{Z},\mu),d_{Z}(\partial{Y},\mu)\}\leq B_{0}.$$
\end{thm}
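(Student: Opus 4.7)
The plan is a proof by contradiction. Suppose no constant $B_{0}$ works. Then there exist sequences of multicurves $\mu_{n}$ and pairs of essential subsurfaces $Y_{n}\pitchfork Z_{n}$ satisfying
$$d_{Y_{n}}(\partial Z_{n},\mu_{n})\to\infty \quad\text{and}\quad d_{Z_{n}}(\partial Y_{n},\mu_{n})\to\infty.$$
Since the mapping class group acts on the set of unordered pairs of essential subsurfaces of $S$ with only finitely many orbits and conjugates subsurface projections equivariantly, I apply mapping classes and pass to a subsequence so that $Y_{n}=Y$ and $Z_{n}=Z$ are fixed. By compactness of $\mathcal{PML}(S)$, a further subsequence gives $\mu_{n}\to[\mathcal{L}]$ projectively.

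The core of the argument is a limit analysis using Klarreich's Proposition \ref{prop : bdrycc}. Since $\pi_{Y}(\partial Z)$ is a bounded subset of $\mathcal{C}(Y)$ (using $Y\pitchfork Z$), the divergence $d_{Y}(\mu_{n},\partial Z)\to\infty$ forces $\pi_{Y}(\mu_{n})$ to be unbounded in $\mathcal{C}(Y)$, so up to subsequence these projections converge in the Gromov boundary to some $\lambda_{Y}\in\mathcal{EL}(Y)$. A continuity argument comparing intersection numbers on $\mathcal{PML}(S)$ with the arcs produced by $\mu_{n}\cap Y$ then yields a dichotomy for the limit $[\mathcal{L}]$: either $|\mathcal{L}|$ is disjoint from the interior of $Y$ (so $\mathcal{L}$ is supported on $\partial Y$ or on $S\setminus Y$), or $|\mathcal{L}|$ contains a sublamination $\ell_{Y}\subset Y$ whose projective class in $Y$ equals $\lambda_{Y}$, hence is minimal and filling in $Y$. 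The first alternative is ruled out by the other divergence assumption: if $|\mathcal{L}|$ does not meet the interior of $Y$, then any curve component contributing to the limit is disjoint from $Y$, so $\pi_{Z}(\mu_{n})$ stays close to $\pi_{Z}(\partial Y)$ (which is a bounded set in $\mathcal{C}(Z)$ because $\partial Y\pitchfork Z$), contradicting $d_{Z}(\mu_{n},\partial Y)\to\infty$. Therefore alternative (b) holds, and by the same reasoning with the roles of $Y$ and $Z$ exchanged, $|\mathcal{L}|$ also contains a sublamination $\ell_{Z}\subset Z$ which is minimal and fills $Z$.

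The contradiction now follows from $Y\pitchfork Z$ applied to the coexistence of $\ell_{Y}$ and $\ell_{Z}$ inside $|\mathcal{L}|$. If $\ell_{Y}=\ell_{Z}$, this single lamination is contained in $Y\cap Z$, a proper subsurface of $Y$ because $Z\not\supseteq Y$, contradicting that $\ell_{Y}$ fills $Y$. If $\ell_{Y}$ and $\ell_{Z}$ are distinct, they are disjoint components of $|\mathcal{L}|$; since $\ell_{Y}$ fills $Y$, the complement $Y\setminus\ell_{Y}$ is a union of ideal polygons and $\partial Y$-parallel annuli, so no complete geodesic leaf of $\ell_{Z}$ can enter the interior of $Y$ essentially. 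Hence $\ell_{Z}$ is contained in $S\setminus Y$ (up to $\partial Y$), so its support meets $Z$ only in the proper subsurface $Z\setminus Y$, contradicting that $\ell_{Z}$ fills $Z$.

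The main obstacle is the dichotomy and continuity statement in the middle paragraph, which relates divergence of a sequence of subsurface projections in $\mathcal{C}(Y)$ to structural properties of the ambient $\mathcal{PML}(S)$-limit. Establishing it rigorously requires tracking how the closing-up of arcs of $\mu_{n}\cap Y$ along $\partial Y$ interacts with projective convergence in $\mathcal{PML}(S)$, invoking Klarreich's characterization of the Gromov boundary of $\mathcal{C}(Y)$. The case where $Y$ or $Z$ is annular should be treated separately using the formula $d_{\alpha}(b,c)=|b.c|+1$ recorded earlier: a divergence of $d_{\alpha}(\mu_{n},\partial Z)$ amounts to unbounded algebraic intersection of $\mu_{n}$ with the core $\alpha$, and this projectively forces $\mu_{n}\to[\alpha]$ in $\mathcal{PML}(S)$, which makes the continuity argument essentially immediate.
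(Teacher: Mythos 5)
The paper does not prove this statement: it is quoted from Behrstock's work \cite{beh}, and the accompanying remark notes Leininger's observation that an elementary argument yields the explicit constant $B_{0}=3$. So your proposal has to stand on its own, and as written it has two genuine gaps. First, the opening reduction is based on a false claim: the mapping class group does \emph{not} act with finitely many orbits on unordered pairs of overlapping essential subsurfaces. The geometric intersection number $i(\partial Y,\partial Z)$ is an invariant of the orbit of the pair and is unbounded over overlapping pairs (take $Z_{n}=\mathcal{D}_{\gamma}^{n}(Z)$ for a suitable curve $\gamma$), so you cannot normalize to a single fixed pair $(Y,Z)$. You may fix $Y$ alone, but then $Z_{n}$ still varies and the limiting analysis must also track the behavior of $\partial Z_{n}$ (say via Hausdorff or $\mathcal{PML}$ limits), which is precisely where the real work lies; this is not a cosmetic repair.

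Second, the ``dichotomy and continuity statement'' that carries the whole argument --- that divergence of $\pi_{Y}(\mu_{n})$ in $\mathcal{C}(Y)$ together with $\mu_{n}\to[\mathcal{L}]$ in $\mathcal{PML}(S)$ forces $|\mathcal{L}|$ to contain a minimal sublamination filling $Y$ --- is asserted rather than proved, and you yourself flag it as the main obstacle. It is genuinely delicate: the projective limit in $\mathcal{PML}(S)$ is determined by whichever portion of $\mu_{n}$ dominates intersection numbers, and that portion need not be the one driving the divergence of the projection to $Y$, so Klarreich's theorem for $\mathcal{C}(Y)$ does not transfer to the ambient limit without an argument. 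The annular cases also do not fit the ``minimal filling lamination in $Y$'' framework and would need the separate treatment you only sketch. The final step (a single geodesic lamination cannot contain disjoint sublaminations filling each of two overlapping non-annular subsurfaces) is correct. If you want a complete proof, the standard route is the elementary, purely local one behind the paper's remark: if $d_{Y}(\mu,\partial Z)\geq 4$, then every arc of $\mu\cap Z$ must fellow-travel $\partial Y$ inside $Z$, which pins $\pi_{Z}(\mu)$ within distance $O(1)$ of $\pi_{Z}(\partial Y)$; this avoids compactness entirely and produces an explicit $B_{0}$.
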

\begin{remark} We note that Chris Leininger has observed that $B_0$
can be taken to be the universal constant $3$. However the specific value of $B_{0}$ does not play any role in our work.
\end{remark}

\bold{Limits of laminations:} Let
$\mathcal{L}_{i}=(\lambda_{i},m_{i}) (i\in\mathbb{N})$ be a sequence of measured
laminations which converges to a measured lamination
$\mathcal{L}=(\lambda,m)$ in the weak$^{*}$ topology. Suppose that
after possibly passing to a subsequence, the laminations $\lambda_{i}$ converge to a
lamination $\xi$ in the Hausdorff topology of $M_{\infty}(S)$. It is a standard fact that $\lambda\subseteq \xi$, see for
example $\S$I.4 of \cite{notesonthurston}.  
\begin{lem}\label{lem : subsurfcomplim}
 Suppose that a sequence of curves $\{\alpha_{i}\}_{i=0}^{\infty}$ converges
to a lamination $\lambda$ in the Hausdorff topology of $M_{\infty}(S)$. Let $Y$ be an essential subsurface so that $\lambda$ intersects $Y$ essentially. Then for any geodesic lamination $\lambda'$ that intersects $Y$ essentially, we have 
$$d_{Y}(\alpha_{i},\lambda')\asymp_{1,4}d_{Y}(\lambda,\lambda'),$$   
 for all $i$ sufficiently large.   
 \end{lem}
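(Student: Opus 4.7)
The plan is to establish the estimate by producing, for every sufficiently large $i$, a curve in $\pi_{Y}(\alpha_{i})$ that is within distance at most $2$ (in $\mathcal{C}(Y)$) of some curve in $\pi_{Y}(\lambda)$, and then combining this with the diameter bound from Lemma~\ref{lem : diamproj} via the triangle inequality. The two-sided estimate with additive error $4$ will then follow because any two elements of $\pi_{Y}(\alpha_{i})$ (or of $\pi_{Y}(\lambda)$) differ by at most $2$ in $\mathcal{C}(Y)$, and the ``closeness'' of $\pi_{Y}(\alpha_{i})$ to $\pi_{Y}(\lambda)$ absorbs the remaining error.

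I would treat the non-annular and annular cases separately. When $Y$ is non-annular, I pick a leaf $\ell$ of $\lambda$ whose geodesic representative crosses $Y$ transversally (this exists because $\lambda$ intersects $Y$ essentially), and choose a sub-arc $a\subset \ell\cap Y$ with both endpoints on $\partial Y$. Then $a$, together with $\partial Y$, determines a vertex $\gamma\in\pi_{Y}(\lambda)$ via the standard surgery. Lifting to a fundamental domain of $Y$ in $\widetilde{S}=\mathbb{D}^{2}$, the arc $a$ lifts to a compact geodesic segment $\tilde{a}$ joining two lifts of $\partial Y$. Hausdorff convergence $\alpha_{i}\to \lambda$ in $M_{\infty}(S)$ guarantees that for $i$ large there are lifts of $\alpha_{i}$ whose restriction to this fundamental domain Hausdorff-approximates $\tilde{a}$; projecting back down yields an arc $a_{i}\subset\alpha_{i}\cap Y$ with endpoints on $\partial Y$. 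Since isotopy classes of such arcs form a discrete set, for $i$ large $a_{i}$ is isotopic to $a$, and the same surgery produces the same vertex $\gamma \in \pi_{Y}(\alpha_{i})\cap \pi_{Y}(\lambda)$.

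When $Y$ is annular with core $\alpha$, I work in the compactified annular cover $\widehat{Y}$. Essential intersection of $\lambda$ with $Y$ means a lift of some leaf of $\lambda$ to $\widehat{Y}$ joins the two boundary circles; call this arc $\tilde{\ell}$. Hausdorff convergence in $M_{\infty}(S)$ passes to Hausdorff convergence of lifts in $\widehat{Y}$, so for $i$ large a component of the lift of $\alpha_{i}$ to $\widehat{Y}$ is uniformly close to $\tilde{\ell}$ and, in particular, also joins the two boundaries. Using the formula $d_{\alpha}(b,c)=|b.c|+1$ for arcs in $\widehat{Y}$, together with the fact that the algebraic intersection number of two sufficiently Hausdorff-close arcs (relative to a fixed transversal) is eventually constant, one concludes that $\pi_{Y}(\alpha_{i})$ contains a vertex within distance at most $2$ of a vertex in $\pi_{Y}(\lambda)$.

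In either case, fix such $\gamma \in \pi_{Y}(\lambda)$ and $\gamma_{i}\in \pi_{Y}(\alpha_{i})$ with $d_{Y}(\gamma,\gamma_{i})\leq 2$, and let $\delta'\in\pi_{Y}(\lambda')$ realize the minimum in $d_{Y}(\lambda,\lambda')=d_{Y}(\gamma^{*},\delta')$ for some $\gamma^{*}\in\pi_{Y}(\lambda)$. By Lemma~\ref{lem : diamproj}, $d_{Y}(\gamma,\gamma^{*})\leq 2$, so
$$d_{Y}(\alpha_{i},\lambda')\leq d_{Y}(\gamma_{i},\delta')\leq d_{Y}(\gamma_{i},\gamma)+d_{Y}(\gamma,\gamma^{*})+d_{Y}(\gamma^{*},\delta')\leq d_{Y}(\lambda,\lambda')+4,$$
and the symmetric argument (swapping the roles of $\alpha_{i}$ and $\lambda$ in the choice of realizing vertices) gives the reverse inequality. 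The main obstacle is the first step: converting Hausdorff convergence of the invariant subsets of $M_{\infty}(S)$ into discrete convergence of the combinatorial data (isotopy classes of arcs in $Y$, or arcs in $\widehat{Y}$ joining the two boundary circles) that defines $\pi_{Y}$. The point is that the relevant pieces of leaves of $\lambda$ in a fundamental domain of $Y$ (or in $\widehat{Y}$) are compact, so Hausdorff convergence on $M_{\infty}(S)$ uniformly controls them, and the discreteness of the arc complex then forces eventual equality (or bounded distance) of the projections.
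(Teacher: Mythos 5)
Your overall strategy is the same as the paper's: use Hausdorff convergence to produce, for all large $i$, a vertex of $\pi_{Y}(\alpha_{i})$ that coincides with (non-annular case) or lies within distance $2$ of (annular case) a vertex of $\pi_{Y}(\lambda)$, and then finish with Lemma \ref{lem : diamproj} and the triangle inequality. The non-annular half is essentially the paper's argument: a compact sub-arc $b$ of a leaf crossing $Y$ is fellow-travelled by an arc of $\alpha_{i}\cap Y$ for $i$ large, and the two arcs then share a regular neighborhood of their union with $\partial Y$, hence a surgery curve. Your appeal to ``discreteness of isotopy classes'' is an informal rendering of this regular-neighborhood argument, but it is the right mechanism, and your final bookkeeping with additive constant $4$ checks out in both cases.

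The genuine gap is in the annular case, at the sentence claiming that ``the algebraic intersection number of two sufficiently Hausdorff-close arcs \dots is eventually constant.'' As stated this is not a usable fact, for two reasons. First, the intersection number of $a_{i}$ with $b$ in $\widehat{Y}$ counts the intersections of a single lift $\tilde{b}\subset\mathbb{D}^{2}$ with \emph{all} $\langle\alpha\rangle$-translates of $\tilde{a}_{i}$, so closeness of one lift of $\alpha_{i}$ to one lift of a leaf of $\lambda$ does not by itself bound $|a_{i}.b|$. Second, Hausdorff closeness of two arcs in $\widehat{Y}$ does not in general bound their geometric intersection number (one arc can oscillate across the other while staying in a small neighborhood of it), so one must exploit that these arcs are projections of geodesics and argue in $\mathbb{D}^{2}$. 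This is exactly where the paper does its real work: it normalizes so that $\tilde{a}_{i}$ and $\tilde{b}$ cross at the origin, observes that every other lift $\alpha^{n}\tilde{b}$ ($n\neq 0$) stays a definite distance from $\tilde{b}$ near the origin, and uses long fellow-travelling to conclude that $\tilde{a}_{i}$ meets no lift of $b$ other than $\tilde{b}$, whence $|a_{i}.b|\leq 1$ and $d_{Y}(\alpha_{i},\lambda)\leq 2$. You need to supply this lift-counting argument (or an equivalent) to close the annular case; once it is in place the rest of your proof goes through.
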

 \begin{proof}
 First suppose that $Y$ is an essential non-annular subsurface. Equip $S$ with a complete hyperbolic metric and realize $\partial{Y}$, the curves $\alpha_{i}$
 and the lamination $\lambda$ geodesically in this
 metric. Let $b$ an arc in $\lambda\cap Y$ and $\delta>0$ be so that the $\delta-$neighborhood of $b\cup\partial{Y}$ is a regular neighborhood and at least one of the components of the boundary of the neighborhood is an essential curve in $Y$. Denote the neighborhood by $U$, see Figure \ref{fig : Hdistsubsurf}. Let $l$ be the geodesic in $\lambda$ so that $b\subset l$. Let $\tilde{l}$ be a lift of $l$ to the universal cover $\mathbb{D}^{2}$.
 The convergence of the curves $\alpha_{i}$ to $\lambda$ in the Hausdorff topology of $M_{\infty}(S)$ (see Lemma I.4.1.11 in \cite[\S I.4]{notesonthurston}) guarantees that given $\delta'<\delta$ and $L>0$, for all $i$ sufficiently large,  there is a lift $\tilde{\alpha}_{i}$ of $\alpha_{i}$ to $\mathbb{D}^{2}$ so that $\tilde{\alpha}_{i}$ and $\tilde{l}$, $\delta'-$fellow travel on an interval of length at least $L$. Then projecting $\tilde{\alpha}_i$ and $\tilde{l}$ to $S$ we can see that there is an arc $a_{i}$ of $\alpha_{i}\cap Y$ such that the arcs $b$ and $a_{i}$ are $\delta'$ fellow travelers in $Y$. This implies that the regular neighborhood $U$ is also a regular neighborhood of $a_{i}\cup\partial{Y}$. By the definition of the subsurface projection the essential boundary curve of this neighborhood is a curve in $\pi_{Y}(\alpha_{i})$. 
 
   \begin{figure}
\centering
\includegraphics[scale=.16]{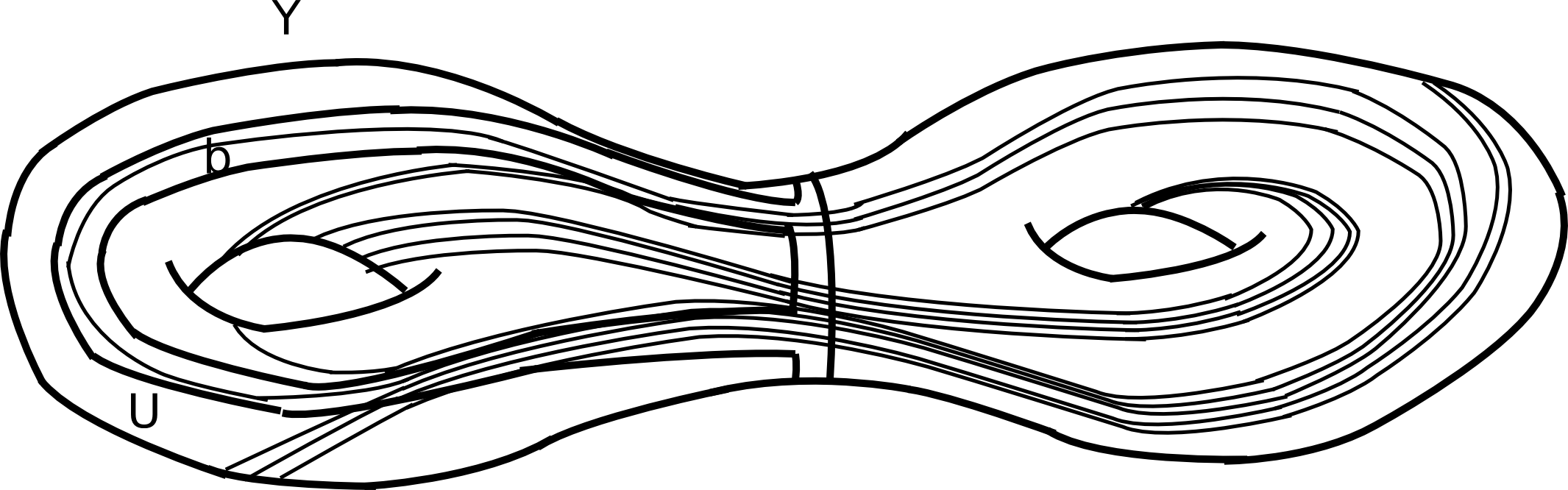}
\caption{ The $\delta$ neighborhood $U$ of an arc $b$ in $\lambda\cap Y$ is a regular neighborhood in $Y$ with at least one essential boundary curve. For $i$ sufficiently large, the Hausdorff distance of $\alpha_{i}$ and $\lambda$ is less than $\delta$, so $\alpha_{i}\cap Y$ is contained in $U$.} 
\label{fig : Hdistsubsurf}
\end{figure}

By Lemma \ref{lem : diamproj}, $\pi_{Y}(\lambda)$ and $\pi_{Y}(\alpha_{i})$ are subsets of $\mathcal{C}_{0}(Y)$ with diameter at most $2$. Moreover as we saw in the previous paragraph $\pi_{Y}(\lambda)\cap\pi_{Y}(\alpha_{i})\neq \emptyset$. Therefore 
$$\diam_{Y}(\pi_{Y}(\alpha_{i})\cup\pi_{Y}(\lambda))\leq 4.$$
Let $\beta$ be a curve in $\pi_{Y}(\lambda')$. Then by the above bound on the diameter we have
  $$|d_{Y}(\beta,\alpha_{i})-d_{Y}(\beta,\lambda)|\leq 4.$$
   This completes the proof of the lemma for non-annular subsurface $Y$.
  \medskip
 
 Now suppose that $Y$ is an essential annular subsurface with core curve $\gamma$. Let $b$ be an arc in $\pi_{Y}(\lambda)$. We claim that, for all $i$ sufficiently large, there is an arc $a_{i}$ in $\pi_{Y}(\alpha_{i})$ such that $a_{i}$ and $b$ have at most one intersection point in their interior. If $a_{i}$ and $b$ do not intersect we are done. Otherwise, after conjugation we may assume that the origin of $\mathbb{D}^{2}$ is a lift of an intersection point of $a_{i}$ and $b$. Moreover there are $\tilde{b}$ a lift of $b$ and $\tilde{a}_{i}$ a lift of $a_{i}$ to $\overline{\mathbb{D}^{2}}$ which pass through the origin, see Figure \ref{fig : annprojbd}. As in Figure \ref{fig : annprojbd}, there is a lower bound for the distance of $\tilde{b}$ and any other lift of $b$ to $\mathbb{D}^{2}$. Then choosing $\delta>0$ sufficiently small and $L>0$ large enough, any geodesic in $\mathbb{D}^{2}$ passing through the origin which $\delta$ fellow travels $\tilde{b}$ on an interval of length at least $L$ does not intersect any of the lifts of $b$ except $\tilde{b}$. The geodesic $\tilde{b}$ is a lift of a leaf of $\lambda$ to $\mathbb{D}^{2}$ and $\tilde{a}_{i}$ is a lift of $\alpha_{i}$ to $\mathbb{D}^{2}$. So the Hausdorff convergence of the curves $\alpha_{i}$ to $\lambda$ implies that given $\delta,L>0$, for $i$ sufficiently large $\tilde{a}_{i}$, $\delta$ fellow travels $\tilde{b}$ on an interval of length at least $L$. Therefore as we saw above $\tilde{a}_{i}$ intersects $\tilde{b}$ once at the origin and does not intersect any other lift of $b$. The number of times that the arcs $a_i$ and $b$ intersect is equal to the number times that $\tilde{b}$ intersects all of the lifts of $a_{i}$ to $\mathbb{D}^{2}$. Which by the above discussion is at most $1$ (see Figure \ref{fig : annprojbd}). The proof of the claim is complete.
 
 \begin{figure}
\centering
\includegraphics[scale=.16]{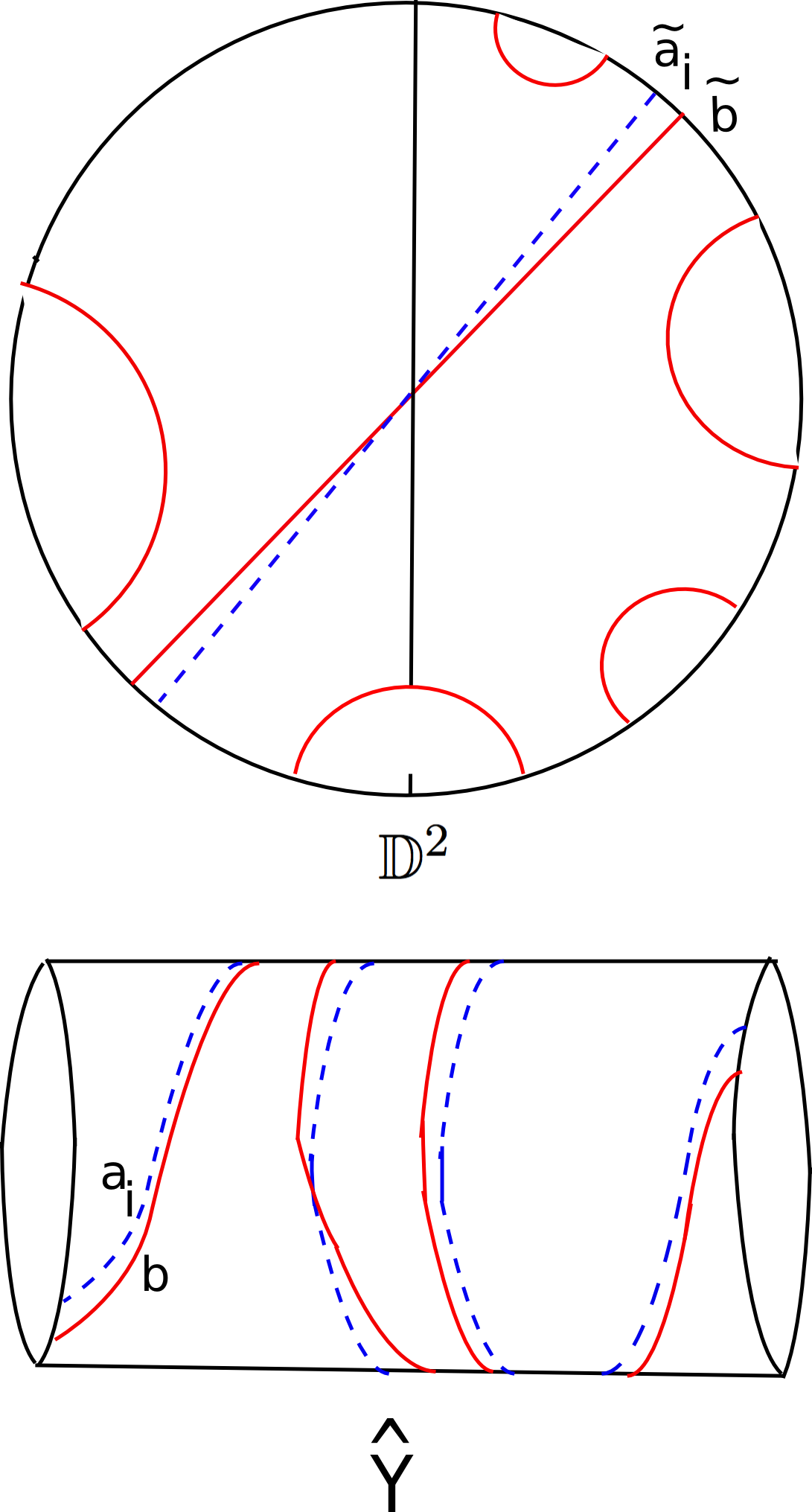}
\caption{Bottom: The arc $b$ in $\pi_{Y}(\lambda)$ and $a_{i}$ in $\pi_{Y}(\alpha_{i})$ in the compactified annular cover $\widehat{Y}$. Top: The lift $\tilde{b}$ of the arc $b$ and $\tilde{a}_{i}$ of $a_{i}$ to the universal cover $\mathbb{D}^{2}$ that pass through the origin and fellow travel for a long portion. As in the picture $\tilde{a}_{i}$ and $\tilde{b}$ intersect once, moreover $\tilde{a}_{i}$ does not intersect any other lift of $b$ to $\mathbb{D}^{2}$. Thus $b$ and $a_{i}$ in $\widehat{Y}$ intersect once.} 
\label{fig : annprojbd}
\end{figure}

 The fact that $a_{i}$ and $b$ intersect at most once implies that 
 $$d_{Y}(\alpha_{i},\lambda)\leq 2.$$
   By Lemma \ref{lem : diamproj}, $\pi_{Y}(\alpha_{i})$ and $\pi_{Y}(\lambda)$ are subsets of $\mathcal{C}_{0}(Y)$ with diameter at most $1$. Moreover as we saw above $\pi_{Y}(\alpha_i)$ and $\pi_{Y}(\lambda)$ have distance at most $2$.  Therefore 
$$\diam_{Y}(\pi_{Y}(\alpha_{i})\cup\pi_{Y}(\lambda))\leq 4.$$
Let $\beta$ be a curve in $\pi_{Y}(\lambda')$. Then by the above bound on the diameter we have
  $$|d_{Y}(\beta,\alpha_{i})-d_{Y}(\beta,\lambda)|\leq 4.$$
   This completes the proof of the lemma for annular subsurface $Y$.
  \end{proof}   
   
 \bold{Hierarchy paths and the distance formula:} Hierarchy paths
 introduced by Masur and Minsky \cite{mm2}, comprise quasi-geodesics in
 the pants and marking graphs of a surface with constants depending
 only on the topological type of the surface. Hierarchy paths have
 properties encoded in their end points and the associated subsurface
 coefficients. For a list of these properties see \cite[\S 2]{bmm2} and
 \cite[\S 2]{wpbehavior}. Here we only state a key feature of hierarchy paths which is the no backtracking property. For other properties we provide a reference wherever we use them.

 \begin{thm}\label{thm : nbacktr}
There exists a constant $M_{2}>0$ depending only on the topological type of the surface $S$ with the following property. Let $\rho:[m,n]\to P(S)$ be a hierarchy path. Let $i,j,k,l\in[m,n]$ with $i\leq j\leq k\leq l$. For any subsurface $Y\subseteq S$ we have
 $$d_{Y}(\rho(i),\rho(l))+2M_{2}\geq d_{Y}(\rho(j),\rho(k)).$$
 \end{thm}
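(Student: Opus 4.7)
The plan is to deduce the no-backtracking inequality from the underlying hierarchy structure that generates $\rho$, specifically the fact that the projection of a hierarchy path to any subsurface curve complex is controlled, up to bounded error, by a single tight geodesic in that curve complex.

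\textbf{Setup and cases.} Recall that a hierarchy path $\rho : [m,n] \to P(S)$ arises from a hierarchy $H$ of tight geodesics living in the curve complexes of various essential subsurfaces (the \emph{domains} of $H$), with each vertex $\rho(t)$ obtained as a slice of $H$. Fix a subsurface $Y \subseteq S$, and split into cases according to whether $Y$ is a domain of $H$. If $Y$ is \emph{not} a domain, then a standard application of the Behrstock inequality (Theorem~\ref{thm : beh}) along the hierarchy shows that $d_Y(\rho(s), \rho(t))$ is bounded by a constant depending only on $\xi(S)$ for all $s, t \in [m,n]$; the desired inequality then holds after choosing $M_2$ larger than this bound.

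\textbf{The main case.} Suppose $Y$ is the domain of a geodesic $h_Y \in H$. The hierarchy machinery of Masur--Minsky produces a subinterval $[t_1, t_2] \subseteq [m,n]$ (the active interval of $h_Y$) and a constant $D_0 = D_0(\xi(S))$ such that for every $t \in [m,n]$ the projection $\pi_Y(\rho(t))$ lies within $D_0$ of a specific vertex $v(t)$ of $h_Y$, with $v(t)$ constant on $[m, t_1]$ and on $[t_2, n]$, and $v(t)$ monotone along $h_Y$ for $t \in [t_1, t_2]$. Consequently, for any three times $s \leq t \leq u$ in $[m,n]$, the vertices $v(s), v(t), v(u)$ are ordered along the geodesic $h_Y$, and comparing the projections $\pi_Y(\rho(\cdot))$ to these vertices via the triangle inequality yields the monotonicity estimate
\begin{equation*}
d_Y(\rho(s), \rho(u)) \geq d_Y(\rho(s), \rho(t)) + d_Y(\rho(t), \rho(u)) - M_2
\end{equation*}
for a constant $M_2 = M_2(\xi(S))$ absorbing $D_0$.

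\textbf{Conclusion.} Apply the monotonicity estimate once to the ordered triple $(i, k, l)$ to obtain $d_Y(\rho(i), \rho(l)) + M_2 \geq d_Y(\rho(i), \rho(k))$, and once to the triple $(i, j, k)$ to obtain $d_Y(\rho(i), \rho(k)) + M_2 \geq d_Y(\rho(j), \rho(k))$. Chaining these two inequalities gives $d_Y(\rho(j), \rho(k)) \leq d_Y(\rho(i), \rho(l)) + 2M_2$, which is the claim. The main technical obstacle is the structural description in the main case: that slices of $H$ project to within bounded distance of successive vertices along each geodesic $h_Y$, and that this correspondence respects the time order on $[m,n]$. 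This is essentially the content of the slice-projection and time-order lemmas in Masur--Minsky's \emph{Geometry of the curve complex, II}; once in hand, the deduction is just the triangle inequality argument above.
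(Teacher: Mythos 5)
The paper does not actually prove this theorem: it is stated as a standing property of hierarchy paths imported from the Masur--Minsky machinery (the text just before it says ``here we only state a key feature of hierarchy paths,'' and refers to \cite{bmm2} and \cite{wpbehavior} for the list of such properties). So there is no in-paper argument to compare against; your proposal is a reconstruction of the standard proof, and its logical skeleton is correct. Granting the structural input --- that $\pi_Y(\rho(t))$ stays within a uniform $D_0$ of a vertex $v(t)$ of the tight geodesic $h_Y$ supported on $Y$, with $v(t)$ weakly monotone in $t$ and constant outside the active interval --- the superadditivity estimate for ordered triples follows because $v(s),v(t),v(u)$ sit in order on a geodesic of $\mathcal{C}(Y)$, and your two applications of it, to $(i,k,l)$ and then to $(i,j,k)$, chain to give exactly $d_Y(\rho(j),\rho(k))\leq d_Y(\rho(i),\rho(l))+2M_2$.

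Two remarks on where the real weight sits. First, in the case where $Y$ is not a domain of the hierarchy, the Large Link Lemma only controls $d_Y$ of the initial and terminal markings of $H$; to bound $d_Y(\rho(s),\rho(t))$ for \emph{intermediate} slices you need the Sigma-projection lemma of \cite{mm2} (or an equivalent slice-projection statement), not merely Behrstock's inequality as you suggest --- Behrstock alone does not obviously yield a bound uniform over all pairs of slices. Second, the existence and monotonicity of the vertex map $v(t)$ along a resolution is precisely the content of the slice, resolution and time-order lemmas of \cite[\S 5]{mm2}; that is the genuinely nontrivial input, and you correctly identify it as such. Since you explicitly defer both points to Masur--Minsky, your proposal is an acceptable reduction to the literature, which is in effect what the paper itself does, only without spelling out the reduction.
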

  
 The following theorem is a
 straightforward consequence of the Bounded Geodesic Image Theorem
  \cite[Theorem 3.1]{mm2}.

\begin{thm}\label{thm : bddgeod}
Given $k\geq 1$ and $c\geq 0$, there is a $G\geq 0$ with the following
property. Let $\{\gamma_{i}\}_{i=0}^{\infty}$ be a sequence of curves in
$\mathcal{C}_{0}(S)$ which form a $1-$Lipschitz,
$(k,c)-$quasi-geodesic. Let $Y\subsetneq S$ be an essential subsurface so that
$\gamma_{i}\pitchfork Y$ holds for all $i\geq 0$, then  
$$\diam_{Y}(\{\pi_{Y}(\gamma_{i})\}_{i=0}^{\infty})\leq G.$$
\end{thm}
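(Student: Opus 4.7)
The statement is the quasi-geodesic strengthening of Masur--Minsky's Bounded Geodesic Image Theorem \cite[Theorem 3.1]{mm2}. It suffices to produce $G=G(k,c)$ such that $d_{Y}(\gamma_{i},\gamma_{j})\le G$ uniformly in $i,j$, for then $\diam_{Y}\{\pi_{Y}(\gamma_{i})\}\le G$. The plan is to combine three ingredients already on hand: the geodesic form of BGI; Morse (stability) for quasi-geodesics in the $\delta$-hyperbolic space $\mathcal{C}(S)$, giving a constant $R=R(k,c,\delta)$ so that any $\mathcal{C}(S)$-geodesic joining $\gamma_{i}$ and $\gamma_{j}$ stays within Hausdorff distance $R$ of the sub-quasi-geodesic $\{\gamma_{i},\dots,\gamma_{j}\}$; and the coarse Lipschitz property of $\pi_{Y}$ on curves overlapping $Y$, namely that disjoint curves both overlapping $Y$ have $d_{Y}$-distance at most $2$ (as in Lemma \ref{lem : diamproj} and \cite[Lemma 2.3]{mm2}).

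Fix $i<j$ and let $g_{ij}$ be a $\mathcal{C}(S)$-geodesic from $\gamma_{i}$ to $\gamma_{j}$. The \emph{easy case} is that every vertex of $g_{ij}$ overlaps $Y$; then the Masur--Minsky BGI gives $d_{Y}(\gamma_{i},\gamma_{j})\le \diam_{Y}(g_{ij})\le M$ for a universal constant $M$, and we are done.

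The \emph{bad case} is that some vertex $v$ of $g_{ij}$ fails to overlap $Y$, so that $v$ is either a component of $\partial Y$ or is disjoint from $\partial Y$, giving $d_{\mathcal{C}(S)}(v,\partial Y)\le 1$. Morse stability then produces an index $k\in[i,j]$ with $d_{\mathcal{C}(S)}(\gamma_{k},\partial Y)\le R+1$. Choose an edge path in $\mathcal{C}(S)$ from $\gamma_{k}$ to $\partial Y$ of length at most $R+1$, and truncate it just before it reaches $\partial Y$ to obtain a path $\gamma_{k}=w_{0},w_{1},\dots,w_{N}$ with $N\le R$ whose final vertex $w_{N}$ is disjoint from $\partial Y$ yet still overlaps $Y$. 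Consecutive $w_{l},w_{l+1}$ are disjoint and overlap $Y$, so Lemma \ref{lem : diamproj} gives $d_{Y}(w_{l},w_{l+1})\le 2$, whence $d_{Y}(\gamma_{k},w_{N})\le 2R$. Splitting $g_{ij}$ at $v$ into two sub-geodesics whose vertices all overlap $Y$ (after discarding $v$ itself) and applying the easy case to each piece, one connects the two resulting BGI bounds across the near-$\partial Y$ neighborhood via the length-$N$ chain above, producing a bound of the shape $G=2M+O(R)$. If several vertices of $g_{ij}$ fail to overlap $Y$, one repeats the chaining at each; the no-backtracking behaviour of geodesics in a $\delta$-hyperbolic space combined with Morse stability bounds the number of times $g_{ij}$ can re-enter the $1$-neighborhood of $\partial Y$, so the estimates telescope.

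The main obstacle is the bad case: one must arrange the chaining through the neighborhood of $\partial Y$ so that the final bound depends only on $k,c,\delta$ and the BGI constant $M$, rather than growing with the number of vertices of $g_{ij}$ that fail to overlap $Y$. Once that is controlled, the conclusion is essentially a two-line application of BGI.
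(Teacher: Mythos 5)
The paper offers no proof of this theorem (it is asserted as a ``straightforward consequence'' of \cite[Theorem 3.1]{mm2}), so your argument must stand on its own, and it has a genuine gap in the bad case. Your ingredients --- geodesic BGI, Morse stability in the $\delta$-hyperbolic complex $\mathcal{C}(S)$, and the fact that disjoint curves both overlapping $Y$ have $Y$-projections at distance at most $2$ --- are the right ones, but the assembly fails at the crucial step. After deleting the bad vertex $v$ from $g_{ij}$ and applying BGI to the two remaining sub-geodesics, you must bound $d_{Y}(u,u')$, where $u,u'$ are the vertices of $g_{ij}$ adjacent to $v$. Your length-$N$ chain runs from $\gamma_{k}$ to an auxiliary curve $w_{N}$; it touches neither $u$ nor $u'$, and nothing in the proposal relates $\pi_{Y}(w_{N})$ to $\pi_{Y}(u)$ or $\pi_{Y}(u')$. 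Since $u$ and $u'$ are merely at distance $2$ in $\mathcal{C}(S)$, their $Y$-projections can be arbitrarily far apart --- this is precisely the phenomenon the theorem addresses --- so no amount of control on $\mathcal{C}(S)$-distances closes this gap. A secondary flaw: the intermediate vertices $w_{1},\dots,w_{N-1}$ of an arbitrary edge path from $\gamma_{k}$ toward $\partial Y$ need not overlap $Y$ (they may lie in $S\setminus Y$), so the estimate $d_{Y}(w_{l},w_{l+1})\leq 2$ is unavailable along your chain, and the truncated endpoint $w_{N}$ need not overlap $Y$ either.

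The missing idea is to bridge the gap along the quasi-geodesic itself, which is where the hypothesis that every $\gamma_{i}$ overlaps $Y$ actually enters. Let $R$ be the Morse constant and set $I=\{l:\, d_{\mathcal{C}(S)}(\gamma_{l},\partial Y)\leq R+1\}$. Any two indices in $I$ differ by at most $k(2R+3+c)$, by the lower quasi-geodesic inequality, because the $(R+1)$-neighborhood of $\partial Y$ has diameter at most $2R+3$; hence $I$ lies in an interval $[a,b]$ of uniformly bounded length. If $[i,j]\cap[a,b]=\emptyset$, then every vertex of $g_{ij}$ overlaps $Y$ (a vertex $v$ that did not would satisfy $d_{\mathcal{C}(S)}(v,\partial Y)\leq 1$ and, by Morse, would lie within $R$ of some $\gamma_{l}$ with $l\in[i,j]\cap I$), so BGI gives $d_{Y}(\gamma_{i},\gamma_{j})\leq M$. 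For general $i<j$, interpolate the points $\gamma_{a-1}$ and $\gamma_{b+1}$ (truncated to $[i,j]$): the two outer pieces are handled by the previous sentence, and the middle piece is handled by summing $d_{Y}(\gamma_{l},\gamma_{l+1})\leq 2$ over at most $k(2R+3+c)+2$ steps, which is legitimate precisely because consecutive $\gamma_{l}$ are disjoint ($1$-Lipschitz) and all overlap $Y$ by hypothesis (Lemma \ref{lem : diamproj}). The triangle inequality then yields a uniform $G=2M+O(k(R+c))$, with no need to chain through the geodesic's bad vertices at all.
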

\noindent Here $\diam_{Y}(.)$ is the diameter of the given subset of
$\mathcal{C}(Y)$.

Using the hierarchical machinery Masur and Minsky provide the following
quasi-distance formula in the pants graph of a surface \cite[Theorem 6.12]{mm2}.  Given $A>M_{1}$ ($M_{1}$ is a
constant depending on the topological type of $S$) there are constants $K\geq 1$ and $C\geq 0$ such that 
\begin{equation}\label{eq : dsf}d(\mu,\mu')\asymp_{K,C} \sum_{\substack{Y\subseteq S\\ \nonannular}}\{d_{Y}(\mu,\mu')\}_{A}.\end{equation}
Here the cut-off function $\{.\}_{A}:\mathbb{R}\to \mathbb{R}^{\geq
  0}$ is defined by 
$$\{a\}_{A} = \Big\lbrace  
   \begin{array}{cl}
  a & \;\text{if}\; a\geq A,\\
  0 & \;\text{if}\: a < A .\\
  \end{array}$$

\bold{Bounded combinatorics:} A pair of laminations or partial
markings $(\mu,\mu')$ has non-annular $R-$bounded combinatorics if  
$$d_{Y}(\mu,\mu')\leq R$$
for every proper, essential, non-annular subsurface $Y\subsetneq S$. 

The following result about stability of hierarchy paths with
non-annular bounded combinatorics in the pants graph is an
important ingredient in the proof that bounded combinatorics of end
invariants of a WP geodesic guarantees co-boundedness of the geodesic 
and vice versa (see \cite{bmm2}). We need this theorem in our study of
the behavior of WP geodesics in $\S$\ref{sec : recur}.  

\begin{thm}\cite{bmm2}\label{thm : bddcombstable}
Given $R>0$ there is a quantifier function $$d_{R}:\mathbb{R}^{\geq
  1}\times \mathbb{R}^{\geq 0}\to \mathbb{R}^{\geq 0}$$ so that a
hierarchy path $\rho$ with end points with non-annular $R-$bounded
combinatorics is $d_{R}-$stable in the pants graph. That is, any
$(K,C)-$quasi-geodesic with end points on $|\rho|$ stays in the
$d_{R}(K,C)$ neighborhood of $|\rho|$. Here $|\rho|$ is the union of the pants decompositions of $\rho$. 
\end{thm}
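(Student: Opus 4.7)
The plan is to leverage the hyperbolicity of the curve complex $\mathcal{C}(S)$: under bounded non-annular combinatorics, the pants graph distance reduces (via the distance formula) to the curve graph distance, and stability of quasi-geodesics in $\mathcal{C}(S)$ transfers back to stability in $P(S)$. First, I would propagate the $R$-bounded combinatorics of the endpoints to all of $|\rho|$: applying the no-backtracking property (Theorem~\ref{thm : nbacktr}) to the hierarchy path $\rho$ gives, for any $s, s' \in [m,n]$ and any proper non-annular $Y \subsetneq S$,
$$d_Y(\rho(s), \rho(s')) \leq d_Y(\rho(m), \rho(n)) + 2M_2 \leq R + 2M_2 =: R'.$$
Choosing the threshold $A$ in the distance formula (\ref{eq : dsf}) strictly greater than $R'$, the only summand that can contribute between points of $|\rho|$ is $Y = S$, whence $d_P(\rho(s), \rho(s')) \asymp_{K_0, C_0} d_S(\rho(s), \rho(s'))$ with constants depending only on $R$. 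Selecting a curve $\alpha(s) \in \rho(s)$ from each pants decomposition then shows $\pi_S(|\rho|)$ is a uniform quasi-geodesic in $\mathcal{C}(S)$.

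Next, let $\eta : [0, N] \to P(S)$ be a $(K, C)$-quasi-geodesic with $\eta(0), \eta(N) \in |\rho|$. Its endpoints inherit the $R'$-bounded non-annular combinatorics, so in particular $d_P(\eta(0), \eta(N)) \asymp d_S(\eta(0), \eta(N))$. The central step is to show that $\eta$ itself does not develop large non-annular proper subsurface coefficients in terms of $R, K, C$. Suppose $d_Y(\eta(0), \eta(t)) \geq T$ for some proper non-annular $Y$. The Bounded Geodesic Image Theorem (Theorem~\ref{thm : bddgeod}) forces any curve-graph quasi-geodesic connecting the shadows of $\eta(0)$ and $\eta(t)$ to pass within bounded distance of $\partial Y$. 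Combined with Behrstock's inequality (Theorem~\ref{thm : beh}), this produces a detour of the $\mathcal{C}(S)$-shadow of $\eta$ that is incompatible with the $(K,C)$-quasi-geodesic condition plus the fact that $\pi_S(|\rho|)$ is a uniform quasi-geodesic joining $\pi_S(\eta(0))$ to $\pi_S(\eta(N))$ in a $\delta$-hyperbolic space. Balancing these contributions yields a bound $d_Y(\eta(0), \eta(t)) \leq R''(R, K, C)$ uniform in $t$ and $Y$, and applying the distance formula once more shows $\pi_S(\eta)$ is a $(K', C')$-quasi-geodesic in $\mathcal{C}(S)$ with constants depending only on $R, K, C$.

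Morse stability in the $\delta$-hyperbolic space $\mathcal{C}(S)$ (with universal $\delta$ by \cite{unicorns}) then gives $D_0 = D_0(K', C', \delta)$ with $\pi_S(\eta) \subset N_{D_0}(\pi_S(|\rho|))$ in $\mathcal{C}(S)$. To lift this back to $P(S)$, for each $\eta(t)$ select $\rho(s) \in |\rho|$ so that some curve $\alpha(s) \in \rho(s)$ realizes $d_S(\pi_S(\eta(t)), \alpha(s)) \leq D_0$. Using the bounded combinatorics on $|\rho|$ (Step~1) and on $\eta$ (Step~2) together with the triangle inequality in each $\mathcal{C}(Y)$ and Behrstock's inequality to rule out simultaneously large projections on overlapping subsurfaces, the coefficient $d_Y(\eta(t), \rho(s))$ is bounded uniformly by a constant depending on $R, K, C$ for every proper non-annular $Y$, while $d_S(\eta(t), \rho(s)) \leq D_0 + O(1)$. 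The distance formula applied to the pair $(\eta(t), \rho(s))$ then yields $d_P(\eta(t), \rho(s)) \leq d_R(K, C)$ for a function $d_R$ depending only on $R, K, C$, as required.

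The hard part will be the second step: a general quasi-geodesic $\eta$ has no built-in no-backtracking property, so ruling out large non-annular subsurface detours requires combining the Bounded Geodesic Image Theorem with the $(K, C)$-quasi-geodesic condition on $\eta$ and the hyperbolic stability of the shadow $\pi_S(|\rho|)$. Everything downstream, including the final lifting step to $P(S)$ via the distance formula, is routine once this uniform bound on subsurface coefficients along $\eta$ is secured.
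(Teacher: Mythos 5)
First, a point of reference: the paper does not prove this theorem --- it is quoted from \cite{bmm2} --- so your attempt should be measured against the argument there. Your Step 1 is correct and matches the standard setup: Theorem \ref{thm : nbacktr} propagates the $R$-bounded non-annular combinatorics from the endpoints to all pairs on $|\rho|$, and the distance formula (\ref{eq : dsf}) with threshold above $R+2M_2$ then shows that $d(\rho(s),\rho(s'))$ is comparable to $d_S(\rho(s),\rho(s'))$, so the shadow of $\rho$ is a parametrized quasi-geodesic in $\mathcal{C}(S)$.

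The genuine gap is Step 2, which you rightly identify as the hard part but whose proposed mechanism does not work. A large coefficient $d_Y(\eta(0),\eta(t))$ for a proper non-annular $Y$ does \emph{not} force the $\mathcal{C}(S)$-shadow of $\eta$ to make a long excursion in $\mathcal{C}(S)$: the Bounded Geodesic Image Theorem only says that geodesics between the shadows pass within distance $1$ of $\partial Y$, and $\partial Y$ may sit right on the quasi-geodesic $\pi_S(|\rho|)$. The length created by a large $d_Y$ appears in $P(S)$ (via the distance formula), inside the quasi-isometrically embedded product region of pants decompositions containing $\partial Y$, while the shadow in $\mathcal{C}(S)$ barely moves; since $P(S)$ is not hyperbolic and contains such quasi-flats, a $(K,C)$-quasi-geodesic can a priori execute a ``staircase'' detour deep into such a region without violating any curve-complex constraint, so there is no ``incompatibility with the $(K,C)$-quasi-geodesic condition'' at the level of shadows. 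Ruling this out is precisely the content of the theorem, and it is what \cite{bmm2} accomplishes by a different route: they define the coarse projection $\pi$ of $P(S)$ to $\rho$ via nearest-point projection of shadows onto $\pi_S(|\rho|)$ in $\mathcal{C}(S)$ and prove a contraction-type estimate of the form
$$d(Q,\rho(s))\;\geq\;\frac{1}{K'}\Bigl(d(Q,\pi(Q))+d(\pi(Q),\rho(s))\Bigr)-C',$$
using the distance formula, the vanishing (below threshold) of all proper non-annular terms $d_Y(\pi(Q),\rho(s))$ along $\rho$, and hyperbolicity of $\mathcal{C}(S)$ for the $S$-term; stability then follows from the standard ``contracting implies Morse'' argument. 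Your final lifting step inherits the same defect: bounding $d_S(\eta(t),\rho(s))$ together with bounded combinatorics of pairs on $|\rho|$ does not bound the mixed coefficients $d_Y(\eta(t),\rho(s))$ for a single proper $Y$, which is exactly what has to be established.
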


\subsection{The Weil-Petersson metric}
In this section we assemble properties of the
Weil-Petersson metric we will need. For an introduction to the
synthetic geometry of the Weil-Petersson metric see \cite{wol}. 

The Weil-Petersson metric on the Teichm\"{u}ller space $\Teich(S)$ is
a Riemannian metric with negative sectional curvatures. It is
incomplete, but is geodesically convex: any two points are joined by a
unique geodesic that lies in the interior.  Its metric
completion $\overline{\Teich(S)}$ is a $\CAT(0)$ space. See  $\S$
II.3.4 of \cite{bhnpc} for an introduction to $\CAT(0)$ space.  By the work of H. Masur \cite{maswp} the
completion of the Teichm\"{u}ller space with the Weil-Petersson metric
is naturally identified with the {\em augmented Teichm\"{u}ller space}
obtained by adjoining nodal surfaces as limits. The completion is
stratified by the data of simple closed curves on $S$ that are
pinched: each stratum $\mathcal{S}(\sigma)$ is a copy of the
Teichm\"{u}ller space of the surface $S\backslash \sigma$, where
$\sigma$ is a multicurve. Masur also gave an expansion of the metric near
the completion showing that the inclusion
$\mathcal{S}(\sigma)\hookrightarrow \Teich(S)$ is an isometry and
$\mathcal{S}(\sigma)$ is totally geodesic.  

S. Yamada observed that a stronger form of Masur's expansion should
hold near the completion guaranteeing that the Weil-Petersson metric
is asymptotic to a metric product of strata to higher order, and work
of Daskalopolous-Wentworth \cite{dwwp} gave the appropriate metric
expansion.  Their expansion showed that these completion strata have
the {\em non-refraction property}: For any $X,Y\in\overline{\Teich(S)}$,  the interior of the unique geodesic connecting $X$ and $Y$ lies in the smallest stratum that contains $X$ and $Y$. See \cite{wols} for stronger form of the asymptotic expansion of the WP metric.
The Weil-Petersson metric is invariant under the action of the mapping
class group of the surface $\Mod(S)$ and descends under the natural
orbifold cover to a metric on the moduli space of Riemann surfaces
$\mathcal{M}(S)$.  The completion descends to a metric on the familiar
Deligne-Mumford compactification of $\mathcal{M}(S)$.

\bold{Length-functions:} Let $X\in \Teich(S)$. Let $\alpha$ be a
closed curve on $S$. We denote by $\ell_{\alpha}(X)$ the length of the
geodesic representative of $\alpha$ in its free homotopy class on
$S$. The length-function has a  natural extension to the space of
measured laminations, \cite{bonlam}. For $\mathcal{L}\in
\mathcal{ML}(S)$, we denote the length of $\mathcal{L}$ by $\ell_{\mathcal{L}}(X)$. 

Significant from the point of view of the Weil-Petersson geometry is
the result of Wolpert, \cite{wol}, that each length-function is a
strictly convex function along any WP geodesic. 

\bold{Quasi-isometric model:} Let $S$ be a surface with negative Euler characteristic. There is a constant $L_{S}$ (Bers
constant) depending only on the topological type of $S$ such that any
complete hyperbolic metric on $S$ has a pants decomposition (Bers pants
decomposition) with the property that the length of any curve in the pants decomposition is at most $L_{S}$, see \cite[\S 5]{buser}. Let $X\in \overline{\Teich(S)}$. Suppose that $X\in\mathcal{S}(\sigma)$. A {\it Bers pants decomposition} of $X$, denoted by $Q(X)$, is the union of Bers pants decompositions of the connected components of $S\backslash\sigma$ and $\sigma$. A {\it Bers marking} of $X$, denoted by $\mu(X)$, is a partial marking
obtained from a Bers pants decomposition $Q$ of $X$ by adding a transversal
curve with minimal length for each $\alpha\in Q-\sigma$.  The following
result of Jeffrey Brock provides a quasi-isometric model for the
Weil-Petersson metric. 

\begin{thm}\textnormal{(Quasi-isometric model)} \label{thm : brockqisom} \cite{br}
There are constants $K_{\WP}\geq 1$ and $C_{\WP}\geq 0$ depending only on
the topological type of $S$ with the following property. The map
$Q:\Teich(S)\to P(S)$, assigning to each $X$ a Bers pants
decomposition of $X$ is a $(K_{\WP},C_{\WP})-$quasi-isometry. 
\end{thm}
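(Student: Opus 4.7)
The strategy is the standard one for establishing a quasi-isometric model: verify that $Q$ is coarsely well defined, coarsely surjective, and that both sides of the quasi-isometry inequality hold. The essential analytic input beyond the hierarchical machinery already recalled is Wolpert's asymptotic expansion
\[
d_{\WP}(X,\mathcal{S}(\sigma))^{2} \asymp \ell_{\sigma}(X),
\]
together with strict convexity of length functions along WP geodesics and the non-refraction property of completion strata. On any hyperbolic surface only finitely many simple closed curves have length at most $L_{S}$, so any two Bers pants decompositions of a fixed $X$ are connected by a uniformly bounded sequence of elementary moves; this gives coarse well-definedness. Coarse surjectivity follows by noting that for any pants decomposition $\sigma$, a point $X \in \Teich(S)$ with all Fenchel--Nielsen lengths along $\sigma$ at most $L_{S}$ satisfies $Q(X) = \sigma$.

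For the upper bound $d_{P(S)}(Q(X),Q(Y)) \leq K_{\WP}\, d_{\WP}(X,Y) + C_{\WP}$, I would subdivide the unique WP geodesic from $X$ to $Y$ into unit-length segments and choose a Bers pants decomposition at each subdivision point. Using strict convexity of length functions along WP geodesics together with the estimate $d_{\WP}(X,\mathcal{S}(\alpha))^{2} \asymp \ell_{\alpha}(X)$, any curve of length at most $L_{S}$ at one subdivision point has length controlled by a universal constant at the next. A counting argument then shows that consecutive Bers pants decompositions differ by a uniformly bounded number of elementary moves in $P(S)$, and summing over subdivisions produces the claimed linear upper bound.

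The lower bound $d_{\WP}(X,Y) \leq K_{\WP}\, d_{P(S)}(Q(X),Q(Y)) + C_{\WP}$ is the harder direction. For each elementary move replacing $\alpha$ by $\beta$, supported in a complexity-one subsurface $S'$, I would construct a WP path of uniformly bounded length realizing the move. The idea is to pinch the $\xi(S)-1$ curves common to the two pants decompositions to reach the stratum $\mathcal{S}(\sigma)$, which by Masur's isometric identification is a copy of the Teichm\"uller space of $S\setminus\sigma = S'$ with its own WP metric; to move within this stratum between configurations where $\alpha$ and $\beta$ each have length at most $L_{S}$; and then to unpinch. Wolpert's expansion bounds the pinch/unpinch cost by $O(\sqrt{L_{S}})$, while the intermediate move, taking place in the WP metric of a complexity-one Teichm\"uller space (quasi-isometric to the Farey graph by a direct case of the theorem in the low-complexity setting), has uniformly bounded length because it corresponds to a single Farey edge. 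Concatenating such segments along a geodesic path of elementary moves in $P(S)$ yields the desired bound.

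The main obstacle is producing these bounded-length WP paths realizing elementary moves uniformly. The non-refraction property is what confines the pinch/unpinch detour to the intended stratum, and one must appeal to the precise asymptotics of the WP metric near the completion so that the constants do not depend on the configuration of the common curves or on the specific elementary move. A compactness argument in Fenchel--Nielsen coordinates for $S'$, combined with $\CAT(0)$ geometry of $\overline{\Teich(S)}$, is what finally closes the estimate uniformly in the topological type of $S$.
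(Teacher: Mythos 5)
This theorem is quoted from \cite{br}; the paper itself gives no proof, so there is no internal argument to compare against. Your outline is essentially Brock's original one: the coarse Lipschitz bound via unit subdivision of the WP geodesic, the uniform Lipschitz property of $\ell_{\alpha}^{1/2}$ along WP paths, and Mumford compactness to bound the number of elementary moves between consecutive Bers pants decompositions; and the reverse bound via Wolpert's pinching estimate $d_{\WP}(X,\mathcal{S}(\sigma))\leq\sqrt{2\pi\,\ell_{\sigma}(X)}$, non-refraction, and a finiteness argument reducing a single elementary move to a bounded WP excursion through a complexity-one stratum. One small caution: only the upper bound in your displayed relation $d_{\WP}(X,\mathcal{S}(\sigma))^{2}\asymp\ell_{\sigma}(X)$ holds for all $X$ (the two-sided comparison is only valid as $\ell_{\sigma}\to 0$), but the upper bound is all your argument actually uses.
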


\bold{Ending laminations:}  Let $r:[0,a)\to \Teich(S)$ be a WP
geodesic ray.  Any limit in the weak$^{*}$ topology of an infinite sequence of
distinct Bers curves at $r(t_{n})$ where $t_{n}\to a$ is an {\it ending measured lamination} of $r$. A pinching
curve $\alpha$ along $r$ is a curve so that $\ell_{\alpha}(r(t))\to 0$
as $t\to a$. In \cite{bmm1} Brock, Masur and Minksy showed that the union of
the supports of ending measured laminations and pinching curves of $r$ is a geodesic lamination. We call this lamination the {\it ending lamination} of $r$.

Let $g:(a,b)\to \Teich(S)$ be a WP geodesic, where $(a,b)$ is an open interval containing $0$. If  the forward
trajectory $g|_{[0,b)}$ can be extended to $b$ so that $g(b)\in
\overline{\Teich(S)}$ we define the forward end invariant of $g$ to be
a Bers partial marking of $g(b)$. If not let the forward end invariant
of $g$ be the lamination of $g|_{[0,b)}$ we defined above. We denote
the forward end invariant by $\nu^{+}=\nu^{+}(g)$. Similarly, consider
the backward trajectory $g|_{(a,0]}$ and define the backward end
invariant of $g$, $\nu^{-}=\nu^{-}(g)$. 

From $\S 8$ of \cite{wpbehavior} we have the following result: 

\begin{lem}\label{lem : rayinfty} \textnormal{(Infinite rays)}
Let $\nu$ be a minimal filling lamination. There is an infinite WP
geodesic ray $r$ with forward ending lamination $\nu$.   
\end{lem}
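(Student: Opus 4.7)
The plan is to produce $r$ as a subsequential limit of WP geodesic segments aimed at boundary strata whose pinched curves Gromov-converge to $\nu$, and then to rule out finite length using that $\nu$ is filling.

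Fix a basepoint $X_0 \in \Teich(S)$. Since $\nu \in \mathcal{EL}(S)$, Klarreich's theorem (Proposition \ref{prop : bdrycc}) provides a sequence of simple closed curves $\alpha_n \in \mathcal{C}_0(S)$ with $\alpha_n \to \Phi^{-1}(\nu)$ in the Gromov boundary of $\mathcal{C}(S)$; in particular any accumulation point of $\{\alpha_n\}$ in $\mathcal{PML}(S)$ is supported on $\nu$. For each $n$, choose a point $X_n \in \mathcal{S}(\alpha_n) \subset \overline{\Teich(S)}$ and let $g_n:[0,T_n] \to \overline{\Teich(S)}$ denote the unique unit-speed WP geodesic from $X_0$ to $X_n$, which exists and is well defined by $\CAT(0)$-uniqueness and Wolpert's convexity.

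The first step is to verify that $T_n \to \infty$. By Brock's quasi-isometric model (Theorem \ref{thm : brockqisom}) the WP distance is comparable to the distance in $P(S)$ between Bers pants decompositions, and the natural map $P(S) \to \mathcal{C}(S)$ sending a pants decomposition to any of its curves is $1$-Lipschitz. Since $\alpha_n$ lies in $Q(X_n)$ (it is pinched), and $d_{\mathcal{C}(S)}(Q(X_0),\alpha_n)\to\infty$, we get $d_{\WP}(X_0,X_n)\to \infty$, so $T_n\to\infty$.

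Since $g_n$ is determined by its initial unit tangent vector $v_n \in UT_{X_0}\Teich(S)$ and the unit tangent sphere is compact, after passing to a subsequence $v_n \to v$. Let $r:[0,T) \to \Teich(S)$ be the maximal unit-speed WP geodesic with $r(0)=X_0$ and $r'(0)=v$. By continuity of the geodesic flow in the $\CAT(0)$ space $\overline{\Teich(S)}$, for every $s<T$ and for every $n$ sufficiently large that $T_n\geq s$, we have $g_n(t)\to r(t)$ uniformly on $[0,s]$.

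The main obstacle is to rule out $T<\infty$. Suppose, aiming at a contradiction, that $T<\infty$, so $r$ extends continuously to $r(T) \in \mathcal{S}(\sigma)$ for some nonempty multicurve $\sigma$. By non-refraction the interior of $r$ stays in $\Teich(S)$, and each $\beta \in \sigma$ satisfies $\ell_\beta(r(t)) \to 0$ as $t \to T$. Pass to a further subsequence so that $\alpha_n \to \mathcal{L}$ in $\mathcal{PML}(S)$, necessarily with $|\mathcal{L}|=\nu$ by Klarreich. One then argues that $\beta$ must be disjoint from $\nu$: for $t_0<T$ sufficiently close to $T$ the length $\ell_\beta(r(t_0))$ is arbitrarily small, so by continuity of length functions on $\Teich(S)$ and $g_n(t_0)\to r(t_0)$, the length $\ell_\beta(g_n(t_0))$ is also small for large $n$. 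On the other hand, because $\ell_{\alpha_n}(g_n(T_n))=0$, the Collar Lemma forces $\ell_\beta(g_n(T_n))$ to be large whenever $i(\beta,\alpha_n)>0$. Combining this with Wolpert's strict convexity of $\ell_\beta$ along $g_n$ and the fact that $\ell_\beta(g_n(0))=\ell_\beta(X_0)$ is fixed, one extracts (for all large $n$) that $i(\beta,\alpha_n)=0$: otherwise the function $\ell_\beta\circ g_n$ would assume two interior minima, violating strict convexity. Passing to the limit in $\mathcal{PML}(S)$ gives $i(\beta,\mathcal{L})=0$, so $\beta$ is disjoint from $\nu$, contradicting that $\nu$ fills $S$. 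Hence $T=\infty$ and $r:[0,\infty)\to\Teich(S)$ is an infinite ray.

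Finally I identify the forward ending lamination of $r$ with $\nu$. Pick $t_k \to \infty$ and Bers markings $\mu_k = \mu(r(t_k))$ with bases $Q_k = \base(\mu_k) \subset \mathcal{C}_0(S)$. By Theorem \ref{thm : brockqisom} the sequence $Q_k$ traces a quasi-geodesic in $P(S)$ with $d_{P(S)}(Q_0,Q_k) \to \infty$, and its projection to $\mathcal{C}(S)$ is a $1$-Lipschitz quasi-geodesic. For each fixed $k$, uniform convergence $g_n(t_k)\to r(t_k)$ implies (for $n$ large) that Bers curves at $g_n(t_k)$ and $r(t_k)$ have bounded $\mathcal{C}(S)$-distance. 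Since the sequence of Bers curves along $g_n$ forms a quasi-geodesic in $\mathcal{C}(S)$ from $Q_0$ to $\alpha_n$, stability of quasi-geodesics in the $\delta$-hyperbolic space $\mathcal{C}(S)$ forces any subsequential limit of $Q_k$ in the Gromov boundary to equal $\lim_n\alpha_n = \Phi^{-1}(\nu)$. Applying Proposition \ref{prop : bdrycc} again, any accumulation point of $Q_k$ in $\mathcal{PML}(S)$ is supported on $\nu$. Since the forward ending lamination of $r$ is by definition the union of supports of such weak$^*$-accumulations of Bers curves (there are no pinching curves since $T=\infty$), we conclude that the forward ending lamination of $r$ is $\nu$.
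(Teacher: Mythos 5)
The paper does not prove this lemma itself --- it quotes it from \S 8 of \cite{wpbehavior} --- so I can only judge your argument on its own terms. Your overall architecture (limits of geodesics aimed at strata $\mathcal{S}(\alpha_n)$ with $\alpha_n\to\Phi^{-1}(\nu)$, ruling out finite length via fillingness, then identifying the ending lamination) is the standard one, but the crucial step where you rule out $T<\infty$ contains a genuine error. You claim that if $i(\beta,\alpha_n)>0$ then $\ell_\beta\circ g_n$ --- which equals the fixed value $\ell_\beta(X_0)$ at $t=0$, is less than $\epsilon$ at $t_0$, and is large at $T_n$ --- ``would assume two interior minima, violating strict convexity.'' It would not: a strictly convex function can decrease from $\ell_\beta(X_0)$ to a small value and then increase without bound, having exactly one interior minimum and monotone slopes throughout. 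Convexity places no obstruction on this profile, so you obtain no bound on $i(\beta,\alpha_n)$ and the contradiction never materializes. The standard way to close this step runs in the opposite logical order: first carry out the identification argument for the (possibly finite) ray $r|_{[0,T)}$, showing every ending measured lamination of $r$ is supported on $\nu$; then, if $T<\infty$, each $\beta\in\sigma$ is a pinching curve of $r$, and the Brock--Masur--Minsky structure theorem quoted just before the lemma (the union of the supports of ending measured laminations and the pinching curves is a single geodesic lamination) forces $\beta$ to be disjoint from $\nu$, contradicting that $\nu$ fills $S$.

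There is a second, lesser gap in your final paragraph: you assert that ``the sequence of Bers curves along $g_n$ forms a quasi-geodesic in $\mathcal{C}(S)$ from $Q_0$ to $\alpha_n$'' and then invoke stability of quasi-geodesics in a $\delta$-hyperbolic space. Theorem \ref{thm : brockqisom} makes $Q\circ g_n$ a quasi-geodesic in the pants graph $P(S)$, not in $\mathcal{C}(S)$; the composition with the $1$-Lipschitz map $P(S)\to\mathcal{C}(S)$ is generally not a parametrized quasi-geodesic, and even the unparametrized version for WP geodesics is a nontrivial theorem that you would have to cite or prove. As written, the stability argument identifying the boundary limit of the $Q_k$ with $\Phi^{-1}(\nu)$ does not go through, and since (as noted above) the correct treatment of the $T<\infty$ case also relies on this identification step, the gap propagates backward into the main argument.
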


The following strengthened version
of Wolpert's Geodesic Limit Theorem (see \cite{wols} and \cite{bmm2})
proved in $\S 4$ of \cite{wpbehavior} provides a limiting picture for a sequence of bounded length WP 
geodesic segments in the Teichm\"{u}ller space.  
\begin{thm} \textnormal{(Geodesic limits)} \label{thm : geodlimit} 
Given $T>0$. Let $\zeta_{n}:[0,T]\to \overline{\Teich(S)}$ be a sequence of WP
geodesic segments parametrized by arc-length.  
After possibly passing to a subsequence there is a partition
$0=t_{0}<....<t_{k+1}=T$ of $[0,T]$, possibly empty multi-curves
$\sigma_{0},...,\sigma_{k+1}$ and a multi-curve $\hat{\tau}\equiv
\sigma_{i}\cap \sigma_{i+1}$ for $i=0,1,...,k$ and a piece-wise
geodesic 
 $$\hat{\zeta}:[0,T]\to \overline{\Teich(S)},$$
 with the following properties
\begin{enumerate}[(1)]
\item\label{gl : tau} $\hat{\zeta}((t_{i},t_{i+1}))\subset \mathcal{S}(\hat{\tau})$ for $i=0,...,k$, 
\item \label{gl : sigma}$\hat{\zeta}(t_{i})\in \mathcal{S}(\sigma_{i})$ for $i=0,...,k+1$,

Given a multi-curve $\sigma$ denote by $\tw(\sigma)$ the subgroup of $\Mod(S)$ generated by positive Dehn twists about the curves in $\sigma$. There are elements of the mapping class group $\psi_{n}$ for each $n\in\mathbb{N}$, and $\mathcal{T}_{i,n}\in \tw(\sigma_{i}-\hat{\tau})$ for $ i=1,...,k$ and $n\in\mathbb{N}$ so that
\item\label{gl : converge}$\psi_{n}(\zeta_{n}(t))\to \hat{\zeta}(t)$ as $n\to \infty$ for all $t\in [0,t_{1}]$. Let $\varphi_{i,n}=\mathcal{T}_{i,n}\circ ...\circ \mathcal{T}_{1,n}\circ \psi_{n}$ for $i=1,...,k$ and each $n\in\mathbb{N}$. Then 
 $\varphi_{i,n}(\zeta_{n}(t))\to \hat{\zeta}(t)$ for any $t\in[t_{i},t_{i+1}]$ as $n\to \infty$.
 \end{enumerate}
\end{thm}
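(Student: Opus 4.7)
The plan combines Mumford compactness (to normalize the sequence by the mapping class group), Wolpert's convexity of length functions, and the $\CAT(0)$ geometry and non-refraction property of $\overline{\Teich(S)}$, applied inductively across successive strata transitions.

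First I would normalize the initial endpoints. By Theorem \ref{thm : brockqisom} the Bers markings $\mu(\zeta_{n}(0))$ record the location of $\zeta_{n}(0)$ up to bounded Weil--Petersson error; applying Mumford compactness to their images in moduli space yields mapping class elements $\psi_{n}$ so that, after subsequence, $\psi_{n}(\zeta_{n}(0)) \to \hat{\zeta}(0) \in \mathcal{S}(\sigma_{0})$ in $\overline{\Teich(S)}$, where $\sigma_{0}$ consists of those curves whose length tends to zero. Wolpert's strict convexity of length functions on WP geodesics, together with the $\CAT(0)$ continuity of geodesics in their endpoints, then gives convergence $\psi_{n}(\zeta_{n}(t)) \to \hat{\zeta}(t)$ uniformly on some initial interval $[0, t_{1}]$, where $t_{1}$ is the first positive time the limit visits a stratum deeper than that occupied on $(0, t_{1})$. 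Setting $\sigma_{1}$ to be the pinching data at $\hat{\zeta}(t_{1})$ and $\hat{\tau} := \sigma_{0} \cap \sigma_{1}$, the Daskalopoulos--Wentworth non-refraction property forces $\hat{\zeta}((0, t_{1})) \subset \mathcal{S}(\hat{\tau})$, establishing conditions (1) and (2) for $i=0$.

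Second I would extend past $t_{1}$. The subtlety is that the geodesics $\psi_{n}(\zeta_{n})$ need not actually descend into $\mathcal{S}(\sigma_{1})$: they may only graze that stratum while accumulating Dehn twists about the curves of $\sigma_{1}-\hat{\tau}$, whose lengths pinch only momentarily at $t_{1}$. The refined asymptotic expansion of the WP metric, which to leading order is a metric product in Fenchel--Nielsen coordinates near $\mathcal{S}(\sigma)$, exhibits this accumulation as a twist-parameter drift; solving for the power of $\mathcal{D}_{\alpha}$ that cancels this drift produces a twist $\mathcal{T}_{1,n}\in \tw(\sigma_{1}-\hat{\tau})$ such that $\mathcal{T}_{1,n}\psi_{n}(\zeta_{n}(t_{1}))$ remains bounded in $\overline{\Teich(S)}$ and, after subsequence, converges to $\hat{\zeta}(t_{1})$. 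Re-running the $\CAT(0)$/length-convexity argument from $\hat{\zeta}(t_{1})$ on a further subsequence produces the next break $t_{2}$, the multicurves $\sigma_{2}$, and the twist $\mathcal{T}_{2,n}$, and induction completes the construction.

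Finally, finiteness of the partition on the compact interval $[0,T]$ follows because each break $t_{i}$ carries a positive cost (in length) reflected in the product expansion near $\mathcal{S}(\hat{\tau})$, so only finitely many dips into deeper strata can fit into a finite-length segment; a diagonal subsequence then gives the simultaneous conclusions (3) for all $i$. The principal obstacle is the existence and the correct exponents of the Dehn twist corrections $\mathcal{T}_{i,n}$: this is exactly where one needs the quantitative Wolpert/Daskalopoulos--Wentworth asymptotic expansion, implemented as in $\S 4$ of \cite{wpbehavior}; without that quantitative twist control, the argument only recovers Wolpert's original single-stratum limit theorem and not the uniform piecewise statement across all $k+1$ strata visited on $[0,T]$.
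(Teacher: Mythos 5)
The paper does not prove this theorem; it quotes it from \S 4 of \cite{wpbehavior}, which strengthens Wolpert's Geodesic Limit Theorem. So the comparison here is between your sketch and that known argument. Your overall architecture is the right one and matches the standard proof: Mumford compactness to normalize by $\psi_{n}$, convexity of length functions plus $\CAT(0)$ convergence of geodesics in their endpoints to pass to a limit on each subinterval, non-refraction to locate the open pieces in a single stratum, and Dehn twist corrections $\mathcal{T}_{i,n}$ to account for the twisting that accumulates when $\zeta_{n}$ only grazes a deeper stratum. The finiteness of the partition and the existence of the correct twist exponents are indeed where the quantitative Wolpert/Daskalopoulos--Wentworth expansions enter, as you say.

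However, there is a genuine gap: your argument never addresses the one feature that distinguishes this statement from Wolpert's original theorem, namely that the intersections $\sigma_{i}\cap\sigma_{i+1}$ are all equal to a \emph{single} multi-curve $\hat{\tau}$, so that every open piece $\hat{\zeta}((t_{i},t_{i+1}))$ lies in the \emph{same} stratum $\mathcal{S}(\hat{\tau})$. (The paper's remark immediately after the theorem flags exactly this as the point of the strengthened version.) You define $\hat{\tau}:=\sigma_{0}\cap\sigma_{1}$ at the first step and then say ``induction completes the construction,'' but the inductive step as you describe it only yields $\hat{\zeta}((t_{i},t_{i+1}))\subset\mathcal{S}(\sigma_{i}\cap\sigma_{i+1})$ with possibly different multi-curves $\tau_{i}=\sigma_{i}\cap\sigma_{i+1}$ at each stage --- that is Wolpert's original conclusion, not the one claimed here. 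One must additionally show that a curve pinched along one open piece of the limit is pinched along all of them; in \cite{wpbehavior} this uses the convexity of $\ell_{\alpha}$ along each $\zeta_{n}$ (so $\sup_{[0,T]}\ell_{\alpha}$ is controlled by the endpoint values) together with the fact that the correcting twists $\mathcal{T}_{j,n}$ preserve the lengths of the curves in question, to conclude that the set of curves whose lengths tend to zero uniformly on all of $[0,T]$ is well defined and equals each $\sigma_{i}\cap\sigma_{i+1}$. Without an argument of this kind your proof establishes a weaker statement than the one used later in the paper (e.g.\ in Lemma \ref{lem : zetamnthick}, where the single $\hat{\tau}$ is essential to derive the contradiction from bounded combinatorics).
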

\begin{remark}
The central difference between the above version and original versions
lies in the assertion that we have one (possibly
empty) multi-curve $\hat{\tau}$ rather than several multi-curves
$\tau_{i}=\sigma_{i}\cap\sigma_{i+1}, i=0,1,..,k,$ allowed in Wolpert's Geodesic
Limit Theorem.  In particular, in part (\ref{gl : tau}) the geodesic segments
$\hat{\zeta}((t_{i},t_{i+1}))$ lie in one stratum
$\mathcal{S}(\hat{\tau})$ rather than several strata
$\mathcal{S}(\tau_{i})$. 
\end{remark}
 
\section{Minimal non-uniquely ergodic laminations}\label{sec : nue}

A (measurable) geodesic lamination $\lambda$ is {\it non-uniquely
  ergodic} if there are non-proportional measures supported on
$\lambda$. More precisely, $\lambda$ is non-uniquely ergodic if there exist
transverse measures $m$ and $m'$ supported on $\lambda$ and curves
$\alpha$ and $\beta$ such that
$$\frac{m(\alpha)}{m'(\alpha)}\neq\frac{m(\beta)}{m'(\beta)}.$$

 Gabai \cite [\S9]{gabaiendlamspace} gave a recipe to construct
minimal filling non-uniquely ergodic geodesic laminations on any
surface $S$ with $\xi(S)> 1$. In fact, Gabai outlined the
construction of minimal filling laminations and measures supported on each one of them with distinct projective classes
 \cite[Theorem 9.1]{gabaiendlamspace}. Leininger-Lenzhen-Rafi \cite[\S3-5]{nonuniqueerg} gave a detailed
construction of minimal filling non-uniquely ergodic
laminations on the surface $S_{0,5}$. Moreover, they studied the set of
measures supported on the lamination and their projective
classes. 

We first recall the construction of \cite{nonuniqueerg}. Let
$\{e_{i}\}_{i=1}^{\infty}$ be a sequence of positive integers. Let
$\rho:S_{0,5}\to S_{0,5}$ be the order-five homeomorphism of $S_{0,5}$
realized as the rotation by angle $\frac{4\pi}{5}$ in Figure \ref{fig : nue05}. Let $\mathcal{D}=\mathcal{D}_{\hat{\gamma}_{2}}$ be the
positive Dehn twist about the curve $\hat{\gamma}_{2}$. Let
$f_{i}=\mathcal{D}^{e_{i}}\circ \rho$, for $i\geq 1$. Define the sequence of curves
$\hat{\gamma}_{i}=f_{1}\circ f_{2}\circ ...\circ
f_{i}(\hat{\gamma}_{0})$, for $i\geq1$. The curves
$\hat{\gamma}_{0},...,\hat{\gamma}_{5}$ are shown in Figure \ref{fig : nue05}.
  
   \begin{figure}[htbp]
\centering
\includegraphics[scale=.16]{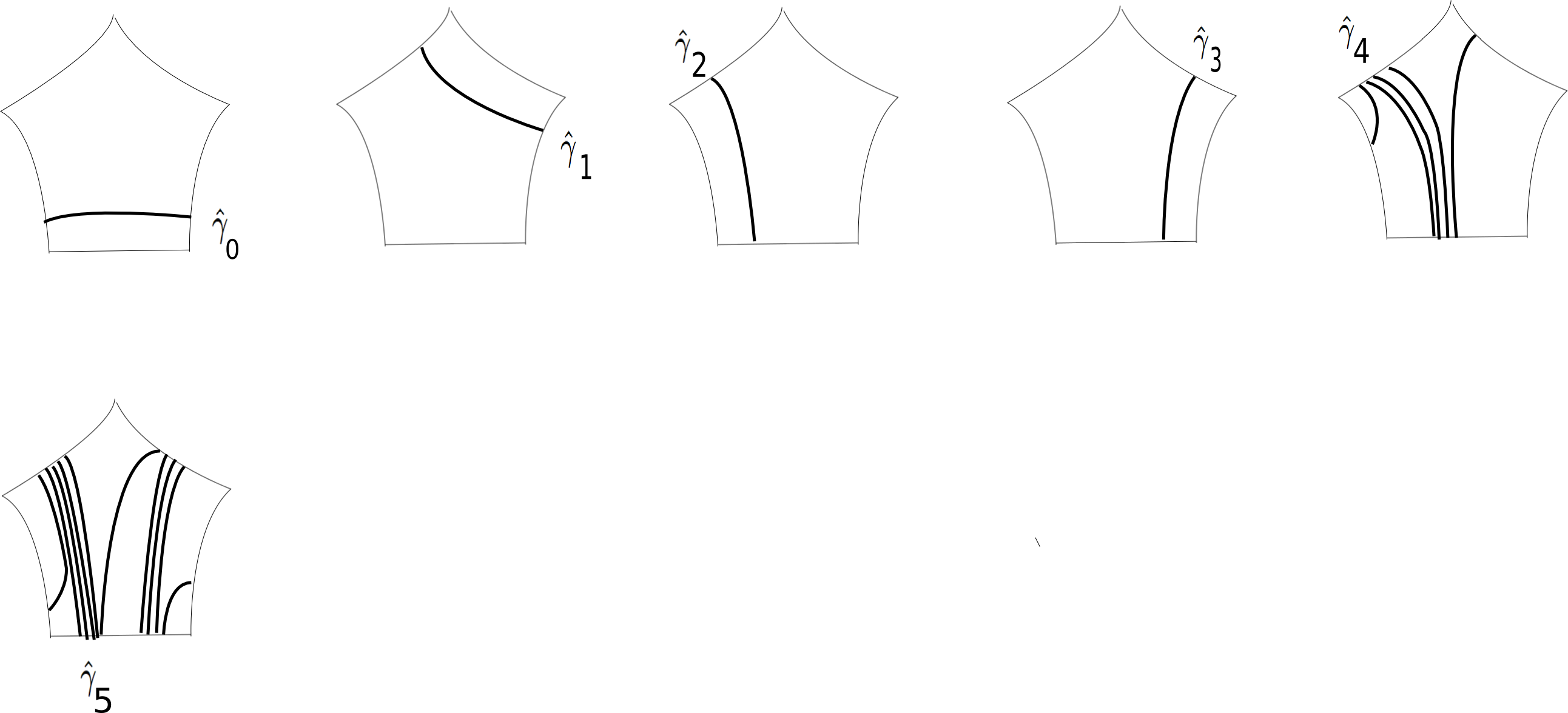}
\caption{The double of each pentagon in the picture is a five-times punctured $2-$sphere.
  Let the curves $\hat{\gamma}_{0},\hat{\gamma}_{1},...,\hat{\gamma}_{5}$ are shown in the
  picture. Any other six consecutive curves in the sequence after applying an appropriate element of $\Mod(S_{0,5})$ are the same as the above six curves, where the last two curves have different number of parallel strands from $\hat{\gamma}_4$ and $\hat{\gamma}_5$, respectively.} 
\label{fig : nue05}
\end{figure}

\begin{prop}\label{prop : 1-nue05} There are constants $E>0$, $k\geq 1, c\geq 0$, and $K\geq 1, C\geq 0$ with the following properties. Suppose that $\{e_{i}\}_{i=1}^{\infty}$ is a sequence of integers satisfying $e_{i}>E$ for all $i\geq 1$. Let $\{\hat{\gamma}_{i}\}_{i=0}^{\infty}$ be the sequence of curves described. Then:
\begin{enumerate} 
\item\label{nue05 : int} For any $i\geq 0$ and $j\geq i+2$, $\hat{\gamma}_{j}\pitchfork \hat{\gamma}_{i}$ holds.  
\item\label{nue05 : fill} For any $i\geq 0$ and $j\geq i+4$ the curves $\hat{\gamma}_{i}$ and $\hat{\gamma}_{j}$ fill the surface $S_{0,5}$.
\item \label{nue05 : qg} The sequence of curves $\{\hat{\gamma}_{i}\}_{i=0}^{\infty}$ is a $1-$Lipschitz, $(k,c)-$quasi-geodesic in $\mathcal{C}_{0}(S_{0,5})$. 
\item \label{nue05 : dgi} $d_{\hat{\gamma}_{i}}(\hat{\gamma}_{j},\hat{\gamma}_{j'})\asymp_{K,C} e_{i-1}$ for any $j\geq i+2$ and $j'\leq i-2$. 
\end{enumerate}
\end{prop}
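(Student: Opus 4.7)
The plan is to reduce every part to a uniformly bounded-index statement about the base window of Figure \ref{fig : nue05} by exploiting the self-similarity of the construction. Concretely, writing $\phi_i = f_1 \circ \cdots \circ f_i$, we have $\hat{\gamma}_i = \phi_i(\hat{\gamma}_0)$, so mapping-class equivariance of intersection number, of the filling property, and of subsurface projection reduces each claim about the infinite sequence to a corresponding statement involving only six consecutive curves in a fixed picture.

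For parts (\ref{nue05 : int}) and (\ref{nue05 : fill}), I would first inspect Figure \ref{fig : nue05} to confirm that $\hat{\gamma}_0$ and $\hat{\gamma}_k$ overlap for $k\in\{2,3\}$ and that $\hat{\gamma}_0$ and $\hat{\gamma}_k$ fill $S_{0,5}$ for $k\in\{4,5\}$, provided all $e_i$ exceed a threshold $E_1$ (without enough Dehn twisting around $\hat{\gamma}_2$, one of the complementary regions of $\hat{\gamma}_0\cup\hat{\gamma}_4$ could remain a non-peripheral annulus). Since any six consecutive curves are related to these by an element of $\Mod(S_{0,5})$ (up to the number of parallel strands in the last two curves, which only increases intersection and does not reduce filling), pushing these pictures forward by $\phi_i$ gives the general overlap and filling statements.

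For part (\ref{nue05 : qg}), consecutive curves $\hat{\gamma}_i$ and $\hat{\gamma}_{i+1}$ are visibly disjoint in Figure \ref{fig : nue05}, yielding the $1$-Lipschitz property. For the quasi-geodesic bound I would first establish part (\ref{nue05 : dgi}) and then apply Theorem \ref{thm : bddgeod} in its annular form: choosing $E$ large enough that the lower bound $\frac{1}{K}e_{i-1}-C$ from part (\ref{nue05 : dgi}) exceeds the constant $G$ of Theorem \ref{thm : bddgeod}, whenever $i_1\le i-2$ and $i+2\le i_2$ any $\mathcal{C}_0(S_{0,5})$-geodesic from $\hat{\gamma}_{i_1}$ to $\hat{\gamma}_{i_2}$ must contain a vertex disjoint from $\hat{\gamma}_i$. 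This forces every such geodesic to pass within distance $1$ of every intermediate $\hat{\gamma}_i$, and a standard pigeonhole comparison with the $1$-Lipschitz upper bound then produces the uniform quasi-geodesic constants $k,c$.

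For part (\ref{nue05 : dgi}), the strategy is to use equivariance $d_{\phi(\alpha)}(\phi(\beta),\phi(\gamma)) = d_{\alpha}(\beta,\gamma)$ to pull the computation back by a suitable power/conjugate of $\phi_{i-1}$, so that the annulus becomes the standard one around $\hat{\gamma}_2$ and the endpoints factor through $\mathcal{D}^{e_{i-1}}$; the shift of index is forced because the twist used in the step producing $\hat{\gamma}_{i-1}$ from $\hat{\gamma}_{i-2}$ is what the later curves remember in the annulus around $\hat{\gamma}_i$. The relative-twist identity \eqref{eq : rtdt} combined with \eqref{eq : dYiYann} then gives $d_{\hat{\gamma}_i}(\hat{\gamma}_j,\hat{\gamma}_{j'}) = e_{i-1} + O(1)$, with the bounded error absorbing the contribution of the remaining exponents $e_k$ via Lemma \ref{lem : diamproj} on each tail together with the no-backtracking Theorem \ref{thm : nbacktr} to rule out cancellation. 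I expect the main obstacle to be this indexing bookkeeping: correctly justifying why the dominant exponent is $e_{i-1}$ rather than $e_i$ or $e_{i+1}$ requires carefully following how the rotation $\rho$ shifts indices through $\phi_i$ and checking that the two tails $j\ge i+2$ and $j'\le i-2$ contribute the same twist parameter up to uniformly bounded error.
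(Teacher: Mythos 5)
There is a genuine gap, and it occurs at the point your proposal leans on hardest: the claim that self-similarity reduces everything to the fixed six-curve picture. Writing $\phi_i=f_1\circ\cdots\circ f_i$, the pullback $\phi_i^{-1}(\hat{\gamma}_j)=f_{i+1}\circ\cdots\circ f_j(\hat{\gamma}_0)$ is one of the curves of Figure \ref{fig : nue05} only when $j-i\leq 5$; for larger gaps it is a curve of growing complexity, so ``pushing these pictures forward by $\phi_i$'' does not give the general overlap statement in part (\ref{nue05 : int}) (which requires all $j\geq i+2$) nor the filling statement in part (\ref{nue05 : fill}) (all $j\geq i+4$). The paper closes exactly this gap by a simultaneous induction on $j-j'$, proving overlap together with the lower bound $d_{\hat{\gamma}_{i}}(\hat{\gamma}_{j},\hat{\gamma}_{j'})\geq e_{i-1}-C$: the base cases are the window computations you describe, and the inductive step uses the Behrstock inequality (Theorem \ref{thm : beh}) to bound the error terms $d_{\hat{\gamma}_{i}}(\hat{\gamma}_{i\pm 2},\cdot)$ in a triangle inequality by $B_{0}$. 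Filling for arbitrary $j-i\geq 4$ is then a separate induction in which a hypothetical $\mathcal{C}(S)$-geodesic of length $2$ from $\hat{\gamma}_i$ to $\hat{\gamma}_j$ would, by Theorem \ref{thm : bddgeod} and the large annular coefficients, have its middle vertex disjoint from the pants decomposition $\{\hat{\gamma}_h,\hat{\gamma}_{h+1}\}$, forcing a contradiction. Your proposal contains neither mechanism, and one cannot substitute part (\ref{nue05 : qg}) here, since the quasi-geodesic constants only give overlap and filling for $j-i$ beyond a possibly large threshold.

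The same gap propagates into your part (\ref{nue05 : dgi}). Your $O(1)$ control of ``the contribution of the remaining exponents'' is attributed to Lemma \ref{lem : diamproj} and Theorem \ref{thm : nbacktr}: the former bounds the projection diameter of a \emph{single} multicurve or lamination, not of the infinite tails $\{\hat{\gamma}_l\}_{l\geq i+2}$ and $\{\hat{\gamma}_{l}\}_{l\leq i-2}$, and the latter concerns hierarchy paths in the pants graph, which is not what this curve sequence is. The tools that actually work are the Behrstock inequality for the lower bound and Theorem \ref{thm : bddgeod} applied to the two quasi-geodesic tails for the upper bound; the latter presupposes parts (\ref{nue05 : int}) and (\ref{nue05 : qg}). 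Your indexing analysis (why the dominant exponent is $e_{i-1}$) and your route to part (\ref{nue05 : qg}) via annular projections plus pigeonhole are sound in outline and genuinely different from the paper, which simply quotes Lemma 3.2 of Leininger--Lenzhen--Rafi for the quasi-geodesic property; but your pigeonhole needs filling for all $j-i\geq 4$ to ensure no geodesic vertex lies within distance $1$ of more than boundedly many $\hat{\gamma}_i$, so it inherits the gap above rather than repairing it.
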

\begin{proof}
We start by proving the following lemma.
\begin{lem}\label{lem : int-subsurfbd}
Let $i\geq 1$. For any $j\geq i+2$ and $j'\leq i-2$,
\begin{equation}\label{eq : intgigj}\hat{\gamma}_{i}\pitchfork
  \hat{\gamma}_{j} \ \ \ \text{and}\ \ \  \gamma_{i}\pitchfork
  \hat{\gamma}_{j'}\end{equation} 
hold. Furthermore, the subsurface coefficient bound 
\begin{equation}\label{eq : dgigjgj'lb}d_{\hat{\gamma}_{i}}(\hat{\gamma}_{j},\hat{\gamma}_{j'})\geq e_{i-1}-C,\end{equation}
holds. 

Here $C=2B_{0}+7$ and $B_{0}$ is the constant from Theorem \ref{thm : beh} (Behrstock Inequality).  
\end{lem}
\begin{proof}
 Set the constant
$$E=C+B_{0}+G_{0}+2,$$
where $G_{0}$ is the constant from Theorem \ref{thm : bddgeod} for a geodesic in the curve complex of $S_{0,5}$.

 We prove (\ref{eq : intgigj}) and (\ref{eq : dgigjgj'lb}) simultaneously by induction on $j-j'$. The proof of the base of the induction breaks into the following cases:
 \medskip

\noindent{\bf Case $j-j'=4$, $j-i=2$ and $i-j'=2$:} Applying $(f_{1}\circ ...\circ
f_{i-2})^{-1}$ to the curves $\hat{\gamma}_{j'},\hat{\gamma}_{i}$ and
$\hat{\gamma}_{j}$, we obtain the curves
\begin{center}
$\hat{\gamma}_{0},\hat{\gamma}_{2}=f_{i-1}\circ f_{i}(\hat{\gamma}_{0})$ and
$f_{i-1}\circ ...\circ f_{i+2}(\hat{\gamma}_{0})$, 
\end{center}
respectively. The curve $f_{i-1}\circ ...\circ f_{i+2}(\hat{\gamma}_{0})$ is the same as $\hat{\gamma}_{4}$ with a different number of parallel strands, see Figure \ref{fig : nue05}. Since $\hat{\gamma}_{2}\pitchfork\hat{\gamma}_{0}$ and 
$\hat{\gamma}_{2}\pitchfork f_{i-1}\circ ...\circ f_{i+2}(\hat{\gamma}_{0})$ hold, it follows that
$\hat{\gamma}_{i}\pitchfork \hat{\gamma}_{i-2}$ and
$\hat{\gamma}_{i}\pitchfork \hat{\gamma}_{i+2}$ hold. This is (\ref{eq : intgigj}). 
  
  We proceed to establish (\ref{eq : dgigjgj'lb}). We have $f_{i-1}\circ ...\circ f_{i+2}(\hat{\gamma}_{0})=\mathcal{D}_{\hat{\gamma}_{2}}^{e_{i-1}}\circ \rho(\hat{\gamma}_{3})$. Then by the formula (\ref{eq : rtdt}) for the relative twists we have 
  $$\tau_{\hat{\gamma}_{2}}(f_{i-1}\circ ...\circ f_{i+2}(\hat{\gamma}_{0}),\rho(\hat{\gamma}_{3}))\subset \{e_{i-1},e_{i-1}+1\}.$$ 
  Then (\ref{eq : dYiYann}) implies that 
  $$d_{\hat{\gamma}_{2}}(f_{i-1}\circ ...\circ f_{i+2}(\hat{\gamma}_{0}),\rho(\hat{\gamma}_{3}))\geq e_{i-1}-3.$$
   Furthermore, the curves $\rho(\hat{\gamma}_{3})$ and $\hat{\gamma}_{0}$ are disjoint and both intersect $\hat{\gamma}_{2}$, thus 
   $$d_{\hat{\gamma}_{2}}(\hat{\gamma}_{0},\rho(\hat{\gamma}_{3}))\leq 1.$$
    Combining the above two subsurface coefficient bounds by the triangle inequality and using the fact that $\diam_{\hat{\gamma}_{2}}(\rho(\hat{\gamma}_{3}))\leq 1$, we have
\begin{equation}\label{eq : dg2>e}d_{\hat{\gamma}_{2}}(\hat{\gamma}_{0},f_{i-1}\circ ...\circ f_{i+2}(\hat{\gamma}_{i}))\geq e_{i-1}-5.\end{equation}
 Applying $f_{1}\circ ...\circ f_{i-2}$ to the subsurface coefficient above and using the fact that $C>5$, we obtain
  $$d_{\hat{\gamma}_{i}}(\hat{\gamma}_{i-2},\hat{\gamma}_{i+2})\geq e_{i-1}-C.$$
   This is the subsurface coefficient bound (\ref{eq : dgigjgj'lb}). 
   \medskip
    
  \noindent{\bf Case $j-j'=5$, $i-j'=2$ and $j-i=3$:} 
    Applying $(f_{1}\circ ...\circ f_{i-2})^{-1}$
  to the curves $\hat{\gamma}_{j'},\hat{\gamma}_{i}$ and
  $\hat{\gamma}_{j}$,  we obtain the curves 
  \begin{center}
  $\hat{\gamma}_{0},\hat{\gamma}_{2}=f_{i-1}\circ f_{i}(\hat{\gamma}_{0})$ and
$f_{i-1}\circ ...\circ f_{i+3}(\hat{\gamma}_{0})$,
\end{center}
 respectively. The curve $f_{i-1}\circ ...\circ f_{i+3}(\hat{\gamma}_{0})$ is the same as $\hat{\gamma}_{5}$ with a different number of parallel strands, see Figure \ref{fig : nue05}. Since $\hat{\gamma}_{0}\pitchfork \hat{\gamma}_{2}$ and $\hat{\gamma}_{2}\pitchfork f_{i-1}\circ ...\circ f_{i+3}(\hat{\gamma}_{0})$ hold, (\ref{eq : intgigj}) holds. 

We have $f_{i-1}\circ ...\circ f_{i+3}(\hat{\gamma}_{i})=\mathcal{D}_{\hat{\gamma}_{2}}^{e_{i-1}}\circ \rho (f_{i}\circ ...\circ f_{i+3}(\hat{\gamma}_{i}))$.
 Then by (\ref{eq : rtdt}), 
 $$\tau_{\hat{\gamma}_{2}}(f_{i-1}\circ ...\circ f_{i+3}(\hat{\gamma}_{i}),\rho\circ f_{i}\circ ...\circ f_{i+3}(\hat{\gamma}_{i}))\subset\{e_{i-1},e_{i-1}+1\}.$$ 
So (\ref{eq : dYiYann}) implies that 
$$d_{\hat{\gamma}_{2}}(f_{i-1}\circ ...\circ f_{i+3}(\hat{\gamma}_{i}),\rho\circ f_{i}\circ ...\circ f_{i+3}(\hat{\gamma}_{i}))\geq e_{i-1}-3.$$
 Furthermore,  because $\rho(\hat{\gamma}_{3})$ is a curve intersecting $\hat{\gamma}_{2}$ and disjoint from both $\hat{\gamma}_{0}$ and $\rho\circ f_{i}\circ ...\circ f_{i+3}(\hat{\gamma}_{i})$ (to see this, note that $f_{i}\circ...\circ f_{i+3}(\hat{\gamma}_{i})$ is $\hat{\gamma}_{4}$ with different number of parallel strands), we have
 $$d_{\hat{\gamma}_{2}}(\hat{\gamma}_{0},\rho\circ f_{i}\circ ...\circ f_{i+3}(\hat{\gamma}_{i}))\leq 2.$$
  Combining the above two subsurface coefficient bounds by the triangle inequality and using the fact that $\diam_{\hat{\gamma}_{2}}(\rho\circ f_{i}\circ ...\circ f_{i+3}(\hat{\gamma}_{i}))\leq 1$, we have 
$$d_{\hat{\gamma}_{2}}(\hat{\gamma}_{0},f_{i-1}\circ ...\circ f_{i+3}(\hat{\gamma}_{i}))\geq e_{i-1}-6.$$
Now applying $f_{1}\circ ...\circ f_{i-2}$ to the above subsurface coefficient and using the fact that $C>6$, we get   
  $$d_{\hat{\gamma}_{i}}(\hat{\gamma}_{i-2},\hat{\gamma}_{i+3})\geq e_{i-1}-C.$$
  This is the subsurface coefficient bound (\ref{eq : dgigjgj'lb}).
  \medskip
  
\noindent{\bf Case $j-j'=5$, $i-j'=3$ and $j-i=2$:} Applying $(f_{1}\circ ...\circ
  f_{i-3})^{-1}$ to the curves $\hat{\gamma}_{j'},\hat{\gamma}_{i}$
  and $\hat{\gamma}_{j}$, we obtain the curves 
  \begin{center}
  $\hat{\gamma}_{0},\hat{\gamma}_{3}=f_{i-2}\circ f_{i-1}\circ f_{i}(\hat{\gamma}_{0})$ and
$f_{i-2}\circ ...\circ f_{i+2}(\hat{\gamma}_{0})$, 
\end{center}
see Figure \ref{fig : nue05}. The statement about the intersection of curves (\ref{eq : intgigj}) holds since $\hat{\gamma}_{3}\pitchfork\hat{\gamma}_{0}$ and $\hat{\gamma}_{3}\pitchfork f_{i-2}\circ ...\circ f_{i+2}(\hat{\gamma}_{0})$. 

By the triangle inequality
\begin{eqnarray}\label{eq : tridg3g0g5}
d_{\hat{\gamma}_{3}}(\hat{\gamma}_{0},f_{i-2}\circ ...\circ f_{i+2}(\hat{\gamma}_{0}))&\geq& d_{\hat{\gamma}_{3}}(\hat{\gamma}_{1},f_{i-2}\circ ...\circ f_{i+2}(\hat{\gamma}_{0}))\\
&-&d_{\hat{\gamma}_{3}}(\hat{\gamma}_{0},\hat{\gamma}_{1})-\diam_{\hat{\gamma}_{3}}(\hat{\gamma}_{1}).\nonumber
\end{eqnarray}
First we find a lower bound for the first term on the right-hand side of (\ref{eq : tridg3g0g5}). Note that $f_{i-2}(\hat{\gamma}_{0})=\hat{\gamma}_{1}$ and $f_{i-2}(\hat{\gamma}_{2})=\hat{\gamma}_{3}$.
Thus applying $(f_{i-2})^{-1}$ to this term, we obtain
$$d_{\hat{\gamma}_{2}}(\hat{\gamma}_{0},f_{i-1}\circ ...\circ f_{i+2}(\hat{\gamma}_{0})).$$
This subsurface coefficient by (\ref{eq : dg2>e}) is bounded below by $e_{i-1}-5$.

The two curves $\hat{\gamma}_{0}$ and $\hat{\gamma}_{1}$ are disjoint and intersect $\hat{\gamma}_{3}$. So the second term on the right-hand side of (\ref{eq : tridg3g0g5}) is bounded by $1$.

These bounds for the two terms on the right-hand side of the inequality (\ref{eq : tridg3g0g5}) and the fact that $\diam_{\hat{\gamma}_{3}}(\hat{\gamma}_{1})\leq 1$ (Lemma \ref{lem : diamproj}) give us
$$d_{\hat{\gamma}_{3}}(\hat{\gamma}_{0},f_{i-2}\circ ...\circ f_{i+2}(\hat{\gamma}_{0}))\geq e_{i-1}-7> e_{i-1}-C.$$
Applying $f_{1}\circ...\circ f_{i-1}$ to the subsurface coefficient on the left-hand side of the above inequality, we obtain the bound (\ref{eq : dgigjgj'lb}).

We proved that (\ref{eq : intgigj}) and (\ref{eq : dgigjgj'lb}) hold for $j-j'\leq 5$. In what follows we assume that (\ref{eq : intgigj}) and (\ref{eq : dgigjgj'lb}) hold when $j-j'\leq n$, where $n\geq 6$, and prove that (\ref{eq : intgigj}) and (\ref{eq : dgigjgj'lb}) hold for $j-j'=n+1$
\medskip

 If $j-i=2$ or $3$, applying $(f_{1}\circ...\circ f_{i})^{-1}$ to $\hat{\gamma}_{i}$ and $\hat{\gamma}_{j}$ we obtain $\hat{\gamma}_{0}$ and $\hat{\gamma}_{j-i}$, respectively. Then since $\hat{\gamma}_{0}\pitchfork\hat{\gamma}_{j-i}$ (see Figure \ref{fig : nue05}), we have $\hat{\gamma}_{i}\pitchfork\hat{\gamma}_{j}$.  If $j-i=4$ or $5$, then since $(j-2)-i\geq 2$ and $j-(j-2)=2$, by the hypothesis of the induction
$$d_{\hat{\gamma}_{j-2}}(\hat{\gamma}_{i},\hat{\gamma}_{j})\geq E> 2.$$
This bound implies that $\hat{\gamma}_{i}\pitchfork \hat{\gamma}_{j}$ holds.

Now suppose that $j-i\geq 6$. Then we have $(j-2)-i\geq 2$. Thus by
the induction hypothesis  $\hat{\gamma}_{j-2}\pitchfork
\hat{\gamma}_{i}$ holds. Moreover, $\hat{\gamma}_{j}\pitchfork\hat{\gamma}_{j+2}$. So we may write the following triangle inequality 
\begin{eqnarray}\label{eq : dgj-2gigj3}
d_{\hat{\gamma}_{j-2}}(\hat{\gamma}_{i},\hat{\gamma}_{j})&\geq& d_{\hat{\gamma}_{j-2}}(\hat{\gamma}_{j-4},\hat{\gamma}_{j})-d_{\hat{\gamma}_{j-2}}(\hat{\gamma}_{j-4},\hat{\gamma}_{i})-\diam_{\hat{\gamma}_{j-2}}(\hat{\gamma}_{j-4})\nonumber\\
&\geq&E-C-B_{0}-1> 2.
\end{eqnarray}
To get the second inequality in (\ref{eq : dgj-2gigj3}), first, by the assumption of the induction, we have 
$$d_{\hat{\gamma}_{j-2}}(\hat{\gamma}_{j-4},\hat{\gamma}_{j})\geq e_{j-3}-C\geq E-C,$$
This gives a lower bound for the first term on the right-hand side of the first inequality of (\ref{eq : dgj-2gigj3}). Second, since $(j-4)-i\geq 2$ by the assumption of the induction we have 
$$d_{\hat{\gamma}_{j-4}}(\hat{\gamma}_{i},\hat{\gamma}_{j-2})\geq e_{j-5}-C>E-C> B_{0},$$
 where the last inequality holds because $E>C+B_{0}$.
Then Behrstock inequality (Theorem \ref{thm : beh} ) implies that
$$d_{\hat{\gamma}_{j-2}}(\hat{\gamma}_{j-4},\hat{\gamma}_{i})\leq B_{0}.$$ 
This is the upper bound for the second term on the right-hand side of the first inequality of (\ref{eq : dgj-2gigj3}). Finally the last term by Lemma \ref{lem : diamproj} is at most $1$.

The lower bound (\ref{eq : dgj-2gigj3}) guarantees that
$\hat{\gamma}_{i}\pitchfork \hat{\gamma}_{j}$ holds. The proof of that
$\hat{\gamma}_{j'}\pitchfork \hat{\gamma}_{i}$ holds for each $j'\leq i-2$ is
similar. The proof of (\ref{eq : intgigj}) is complete. 
 \medskip

We proceed to establish (\ref{eq : dgigjgj'lb}). Let $j,j'$ be so that $j'\leq i-2$ and $j\geq i+2$. By (\ref{eq : intgigj}) we may write the following triangle inequality 
\begin{eqnarray}\label{eq : dgigjgj'lbt}
d_{\hat{\gamma}_{i}}(\hat{\gamma}_{j'},\hat{\gamma}_{j})&\geq& d_{\hat{\gamma}_{i}}(\hat{\gamma}_{i-2},\hat{\gamma}_{i+2})-d_{\hat{\gamma}_{i}}(\hat{\gamma}_{i-2},\hat{\gamma}_{j'})-d_{\hat{\gamma}_{i}}(\hat{\gamma}_{i+2},\hat{\gamma}_{j})\nonumber\\
&-&\diam_{\hat{\gamma}_{i}}(\hat{\gamma}_{i-2})-\diam_{\hat{\gamma}_{i}}(\hat{\gamma}_{i+2}).
\end{eqnarray}
 We have that $(i-2)-j'<i-j'<j-j'$ and $j-(i+2)<j-i<j-j'$. Thus by the assumption of the induction, the fact that $e_i>E$ and the choice of $E$ we have that 
 \begin{eqnarray*}
 d_{\hat{\gamma}_{i-2}}(\hat{\gamma}_{i},\hat{\gamma}_{j'})&\geq& E-C> B_{0}, \;\text{and}\\
 d_{\hat{\gamma}_{i+2}}(\hat{\gamma}_{i},\hat{\gamma}_{j})&\geq& E-C> B_{0},
 \end{eqnarray*}
 The first lower bound above and the Behrstock inequality imply that the second term on the right-hand side of (\ref{eq : dgigjgj'lbt}) is bounded above by $B_{0}$. Similarly the second bound above and the Behrstock inequality imply that the third term on the right-hand side of (\ref{eq : dgigjgj'lbt}) is bounded above by $B_{0}$. Moreover, by Lemma \ref{lem : diamproj} the third and fourth terms on the right-hand side of (\ref{eq : dgigjgj'lbt}) are less than or equal to $1$. So we obtain
\begin{eqnarray*}
d_{\hat{\gamma}_{i}}(\hat{\gamma}_{j'},\hat{\gamma}_{j})&\geq& d_{\gamma_{i}}(\hat{\gamma}_{i-2},\hat{\gamma}_{i+2})-2B_{0}-2\\
&\geq&e_{i-1}-C.
\end{eqnarray*}
The proof of (\ref{eq : dgigjgj'lb}) is complete. 
\end{proof}

We proceed to prove the proposition. Part (\ref{nue05 : int}) is the statement about intersection of curves (\ref{eq : intgigj}) we proved in Lemma \ref{lem : int-subsurfbd}. Note that (\ref{eq : dgigjgj'lb}) gives the lower bound in part (\ref{nue05 : dgi}).  Part (\ref{nue05 : qg}) is Lemma 3.2 of \cite{nonuniqueerg}. Part (\ref{nue05 : dgi}) follows from parts (\ref{nue05 : int}), (\ref{nue05 : qg}) and Theorem \ref{thm : bddgeod} (Bounded Geodesics Image Theorem). 
\medskip

Now we prove part (\ref{nue05 : fill}) of the proposition. The proof is by induction on $j-i$ and is essentially the one given in Lemma 3.2 of \cite{nonuniqueerg}. Note that here we do not assume any upper bound for the value of $j-i$. 

In the rest of the proof denote the surafce $S_{0,5}$ by $S$. Suppose that $j-i=4$. Applying $(f_{1}\circ ...\circ f_{i})^{-1}$ to the curves $\hat{\gamma}_{i}$ and $\hat{\gamma}_{j}$ we obtain the curves $\hat{\gamma}_{0}$ and $\hat{\gamma}_{4}$ in Figure \ref{fig : nue05}, respectively, which fill $S$. Thus $\hat{\gamma}_{i}$ and $\hat{\gamma}_{j}$ fill $S$.

Suppose that part (\ref{nue05 : fill}) is true for all $j-i\leq n$, where $n\geq 5$. Let $j-i=n+1$. To get a contradiction suppose that curves $\hat{\gamma}_{i}$ and $\hat{\gamma}_{j}$ do not fill the surface. Then $d_{S}(\hat{\gamma}_{i},\hat{\gamma}_{j})\leq 2$. 
On the other hand, by the assumption of the induction the curves $\hat{\gamma}_{i}$ and $\hat{\gamma}_{j-1}$ fill $S$, so $d_{S}(\hat{\gamma}_{i},\hat{\gamma}_{j-1})\geq 3$. Moreover, by the construction of the sequence of curves  $\hat{\gamma}_{j}$ and $\hat{\gamma}_{j-1}$ are disjoint, so $d_{S}(\hat{\gamma}_{j},\hat{\gamma}_{j-1})=1$. Thus by the triangle inequality $d_{S}(\hat{\gamma}_{i},\hat{\gamma}_{j})\geq 2$. The two bounds we established for $d_{S}(\hat{\gamma}_{i},\hat{\gamma}_{j})$ imply that
$$d_{S}(\hat{\gamma}_{i},\hat{\gamma}_{j})= 2.$$
Since $j-i\geq 5$ we may choose an index 
\begin{center}$h$ so that $i<h<h+1<j$, $j-h-1\geq 2$ and $h-i\geq 2$.\end{center}
Then by (\ref{eq : intgigj}) the curves $\hat{\gamma}_{i}$ and $\hat{\gamma}_{j}$ intersect $\hat{\gamma}_{h}$ and $\hat{\gamma}_{h+1}$. Moreover, by the bound (\ref{eq : dgigjgj'lb}), the fact that $e_i>E$ and the choice of $E$ we have that
\begin{eqnarray*}
d_{\hat{\gamma}_{h}}(\hat{\gamma}_{i},\hat{\gamma}_{j})&\geq&E-C>G_{0},\;\text{and}\\
d_{\hat{\gamma}_{h+1}}(\hat{\gamma}_{i},\hat{\gamma}_{j})&\geq&E-C>G_{0}
\end{eqnarray*}
where $G_{0}$ is the constant from Theorem \ref{thm : bddgeod} for a geodesic in $\mathcal{C}(S)$. 

As we saw above $d_{S}(\hat{\gamma}_{j},\hat{\gamma}_{i})=2$, so the geodesic in $\mathcal{C}(S)$ connecting $\hat{\gamma}_{i}$ and $\hat{\gamma}_{j}$ contains three curves $\hat{\gamma}_{i},\gamma'$ and $\hat{\gamma}_{j}$. We have that the curve $\gamma'$ is disjoint from $\hat{\gamma}_{h}$. For otherwise, the curves $\gamma_{i},\gamma',\gamma_{j}$ which form a geodesic in $\mathcal{C}(S)$ intersect $\hat{\gamma}_{h}$. Then Theorem \ref{thm : bddgeod} (Bounded Geodesic Image) implies that $d_{\hat{\gamma}_{h}}(\hat{\gamma}_{i},\hat{\gamma}_{j})\leq G_{0}$. But this contradicts the first lower bound above. Similarly using the second lower bound above we may show that $\gamma'$ and $\hat{\gamma}_{h+1}$ are disjoint. The curves $\hat{\gamma}_{h}$ and $\hat{\gamma}_{h+1}$ consist a pants decomposition on $S$. Thus the only curves disjoint from both $\hat{\gamma}_{h}$ and $\hat{\gamma}_{h+1}$ are themselves. So $\gamma'$ is either $\hat{\gamma}_{h}$ or $\hat{\gamma}_{h+1}$. As we mentioned above the curves  $\hat{\gamma}_{h}$ and $\hat{\gamma}_{h+1}$ intersect the curves $\hat{\gamma}_{i}$ and $\hat{\gamma}_{j}$. So $\gamma'$ intersects both  $\hat{\gamma}_{i}$ and $\hat{\gamma}_{j}$. On the other hand, since $\hat{\gamma}_{i},\gamma'$ and $\hat{\gamma}_{j}$ are consecutive curves on a geodesic in $\mathcal{C}(S)$, $\gamma'$ is disjoint from both $\hat{\gamma}_{i}$ and $\hat{\gamma}_{j}$. This contradiction shows that in fact $\hat{\gamma}_i$ and $\hat{\gamma}_j$ fill $S$ and completes the proof of part (\ref{nue05 : fill}) by induction. 
\end{proof}

Let the sequence of integers $\{e_{i}\}_{i=1}^{\infty}$ with $e_{i}>E$, and the sequence of curves $\{\hat{\gamma}_{i}\}_{i=0}^{\infty}$ be as in Proposition \ref{prop : 1-nue05}. Part (\ref{nue05 : qg}) of Proposition \ref{prop : 1-nue05}  and hyperbolicity of the curve complex imply that the sequence of curves $\{\hat{\gamma}_{i}\}_{i=0}^{\infty}$ converges to a point in the Gromov boundary of the curve complex. By Proposition \ref{prop : bdrycc} this point determines a projective measured lamination $[\mathcal{E}]$ with minimal filling support $\hat{\nu}$ on $S_{0,5}$.  

\begin{prop}\label{prop : 2-nue05} Let the marking $\hat{\mu}$ and the geodesic lamination $\hat{\nu}$ be as above. We have
\begin{enumerate}
\item \label{nue05 : dgimn}There exist $K\geq 1$ and $C\geq 0$ so that $d_{\hat{\gamma}_{i}}(\hat{\mu},\hat{\nu})\asymp_{K,C} e_{i-1}$.

Furthermore, suppose that for some $a>2$ we have $e_{i+1}\geq ae_{i}$ for each $i\geq 1$. Then 
\item \label{nue05 : nue}The geodesic lamination $\hat{\nu}$ is minimal, filling and non-uniquely ergodic. 
\end{enumerate}
\end{prop}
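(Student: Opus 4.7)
My plan proceeds in three steps: first the subsurface-coefficient asymptotic in part (\ref{nue05 : dgimn}), then the minimal-filling portion of part (\ref{nue05 : nue}), then non-unique ergodicity. For part (\ref{nue05 : dgimn}), the idea is to pass from the curves $\hat{\gamma}_j$ to their Hausdorff limit using Lemma \ref{lem : subsurfcomplim} combined with the annular subsurface-coefficient bound of Proposition \ref{prop : 1-nue05}(\ref{nue05 : dgi}). By Proposition \ref{prop : 1-nue05}(\ref{nue05 : qg}) the sequence $\{\hat{\gamma}_i\}$ is a quasi-geodesic in the $\delta$-hyperbolic complex $\mathcal{C}(S_{0,5})$, so it converges to a unique Gromov-boundary point; Proposition \ref{prop : bdrycc} then guarantees that $\hat{\gamma}_j$ accumulates on $\hat{\nu}$ in the Hausdorff topology of $M_{\infty}(S_{0,5})$ and that any weak-$*$ subsequential limit in $\mathcal{PML}(S_{0,5})$ is supported on $\hat{\nu}$. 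Since $\hat{\nu}$ is filling (which I establish below from Proposition \ref{prop : bdrycc}), it intersects every annular subsurface essentially, so Lemma \ref{lem : subsurfcomplim} applied with $Y$ the annulus of core $\hat{\gamma}_i$, $\lambda=\hat{\nu}$, and $\lambda'=\hat{\mu}$ gives, for $j$ sufficiently large,
$$
d_{\hat{\gamma}_i}(\hat{\gamma}_j,\hat{\mu})\asymp_{1,4}d_{\hat{\gamma}_i}(\hat{\nu},\hat{\mu}).
$$
Taking $j\geq i+2$ and using that $\hat{\mu}$ projects to $\mathcal{C}(\hat{\gamma}_i)$ uniformly close (via Lemma \ref{lem : diamproj}) to $\hat{\gamma}_0$, the bound $d_{\hat{\gamma}_i}(\hat{\gamma}_j,\hat{\gamma}_0)\asymp_{K,C}e_{i-1}$ from Proposition \ref{prop : 1-nue05}(\ref{nue05 : dgi}) with $j'=0$ transfers to $d_{\hat{\gamma}_i}(\hat{\mu},\hat{\nu})\asymp e_{i-1}$ as claimed.

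The minimality and filling parts of (\ref{nue05 : nue}) are then immediate from Proposition \ref{prop : bdrycc}: the Klarreich homeomorphism $\Phi$ sends the Gromov-boundary limit of the quasi-geodesic $\{\hat{\gamma}_i\}$ to a point of $\mathcal{EL}(S_{0,5})$, whose elements have minimal filling support by definition, and this support is $\hat{\nu}$.

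The main obstacle, and where the hypothesis $e_{i+1}\geq ae_i$ with $a>2$ is used essentially, is non-unique ergodicity. My plan, modeled on the strategy of Leininger--Lenzhen--Rafi in \cite{nonuniqueerg}, is to exhibit two non-proportional transverse measures on $\hat{\nu}$ by extracting two distinct subsequential limits in $\mathcal{PML}(S_{0,5})$ of an appropriately normalized subsequence of $\{\hat{\gamma}_j\}$. I would first derive an inductive estimate for $i(\hat{\gamma}_{j+1},\hat{\gamma}_k)$ from the recursion $\hat{\gamma}_{j+1}=f_1\circ\cdots\circ f_{j+1}(\hat{\gamma}_0)$ and the fact that $f_{j+1}=\mathcal{D}^{e_{j+1}}\circ\rho$, showing that the dominant contribution to $i(\hat{\gamma}_{j+1},\hat{\gamma}_k)$ is $e_{j+1}\cdot i(\hat{\gamma}_j,\hat{\gamma}_k)$ with a lower-order remainder controlled by $i(\hat{\gamma}_{j-1},\hat{\gamma}_k)$. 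Normalizing by $1/i(\hat{\gamma}_j,\hat{\gamma}_0)$, the growth hypothesis $a>2$ should force the even- and odd-indexed subsequences $\{[\hat{\gamma}_{2j}]\}$ and $\{[\hat{\gamma}_{2j+1}]\}$ in $\mathcal{PML}(S_{0,5})$ to have different limits, because the geometric rate $a>2$ prevents the remainder terms from absorbing the dominant recursion term in a way that would equalize consecutive normalized ratios such as $i(\hat{\gamma}_j,\hat{\gamma}_1)/i(\hat{\gamma}_j,\hat{\gamma}_0)$. By Proposition \ref{prop : bdrycc} both limits are supported on $\hat{\nu}$, so they yield the two non-proportional transverse measures required. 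The technical hurdle I expect here is proving the sharp quantitative separation between the two subsequential limits: it demands careful bookkeeping of how the Dehn-twist exponents $e_j$ interact with the order-five rotation $\rho$ through the intersection-number recursion, and a non-cancellation argument that genuinely uses $a>2$ rather than merely $e_j\to\infty$.
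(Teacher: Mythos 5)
Your treatment of part (\ref{nue05 : dgimn}) is essentially the paper's: the paper derives it from Proposition \ref{prop : 1-nue05} (\ref{nue05 : dgi}) together with the Bounded Geodesic Image Theorem, and the comparison you make between $d_{\hat{\gamma}_i}(\hat{\gamma}_j,\cdot)$ for large $j$ and $d_{\hat{\gamma}_i}(\hat{\nu},\cdot)$ via Lemma \ref{lem : subsurfcomplim} is exactly the mechanism the paper spells out for the analogous bound (\ref{eq : dYmn}) in the proof of Theorem \ref{thm : nabddcomb}. One imprecision: Proposition \ref{prop : bdrycc} does \emph{not} give that $\hat{\gamma}_j\to\hat{\nu}$ in the Hausdorff topology; a Hausdorff subsequential limit $\xi$ only satisfies $\hat{\nu}\subseteq\xi$, so Lemma \ref{lem : subsurfcomplim} must be applied with $\lambda=\xi$ and the discrepancy absorbed using $\diam_Y(\xi)\leq 2$ and $\hat{\nu}\subseteq\xi$, as the paper does. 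Likewise, minimality and fillingness are immediate from Klarreich's theorem, which is precisely how the paper obtains them ($\hat{\nu}$ is \emph{defined} as the support of the boundary point, hence minimal and filling); the hypothesis $a>2$ plays no role there.

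The genuine gap is non-unique ergodicity. The paper disposes of this by citation: the sequence $\{\hat{\gamma}_i\}$ is verbatim the Leininger--Lenzhen--Rafi construction, and part (\ref{nue05 : nue}) \emph{is} Theorem 1.1 of \cite{nonuniqueerg}. What you offer instead is a plan whose decisive step --- that the normalized even- and odd-indexed subsequences of $[\hat{\gamma}_j]$ in $\mathcal{PML}(S_{0,5})$ converge to non-proportional limits --- is exactly the content of the statement and is left unproved (you flag it yourself as the ``technical hurdle''). Moreover, the recursion you posit is not of the scalar form $i(\hat{\gamma}_{j+1},\hat{\gamma}_k)\approx e_{j+1}\,i(\hat{\gamma}_j,\hat{\gamma}_k)+O(i(\hat{\gamma}_{j-1},\hat{\gamma}_k))$: the basic twisting estimate gives $i(\mathcal{D}_\alpha^n\beta,\gamma)\approx n\,i(\alpha,\beta)\,i(\alpha,\gamma)$, and since the twisting curve at stage $j$ is $f_1\circ\cdots\circ f_{j-1}(\hat{\gamma}_2)=\hat{\gamma}_{j+1}$ (changing with $j$), the correct recursion involves products of intersection numbers with these moving curves; controlling the error terms and extracting the separation of the two projective limits is the bulk of \S 4--5 of \cite{nonuniqueerg} and genuinely uses $\sum e_i/e_{i+1}<\infty$-type consequences of $a>2$. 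As written, your argument does not establish non-unique ergodicity; you should either reproduce that analysis in full or, as the paper does, invoke \cite[Theorem 1.1]{nonuniqueerg} directly.
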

Part (\ref{nue05 : dgimn}) follows from Proposition \ref{prop : 1-nue05} (\ref{nue05 : dgi})
and Theorem \ref{thm : bddgeod} (Bounded Geodesics Image Theorem). Part (\ref{nue05 : nue}) is Theorem 1.1 of
\cite{nonuniqueerg}. Note that the growth of powers $e_{i+1}\geq ae_{i}$ ($a>2$) is required
to guarantee the non-unique ergodicity of the lamination $\hat{\nu}$.  
\medskip

The utility of the construction of Leininger-Lenzhen-Rafi lies in its
control on  subsurface coefficients; see Proposition 
\ref{prop : 2-nue05} (\ref{nue05 : dgimn}), Theorem \ref{thm :
  nabddcomb} and Theorem \ref{thm : abd}. These are conditions similar to arithmetic conditions
for coefficients of the continued fraction expansion of irrational numbers
relevant to the coding of geodesics on the modular surface which is
$\mathcal{M}(S_{1,1})$ as well; see \cite{modsurf}. Though Gabai's
construction produces a minimal filling non-uniquely ergodic
lamination on any surface $S$ with $\xi(S)>1$, it provides no a priori
control on subsurface coefficients.  

\begin{thm}\label{thm : nabddcomb}
There is a constant $\hat{R}>0$ such that for any proper, essential, non-annular subsurface $Y\subsetneq S$ we have $d_{Y}(\hat{\mu},\hat{\nu})\leq\hat{R}$. In other words, the pair $(\hat{\mu},\hat{\nu})$ has
non-annular $\hat{R}-$bounded combinatorics.  
\end{thm}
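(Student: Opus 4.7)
The plan is to exploit two features of the setting. First, since $\xi(S_{0,5})=2$, every proper essential non-annular subsurface $Y\subsetneq S$ is a four-holed sphere, so $\partial Y$ consists of a single simple closed curve $\alpha_Y$ in $S$, and a curve of $S$ fails to overlap $Y$ essentially only if it coincides with $\alpha_Y$. Second, the sequence $\{\hat{\gamma}_i\}_{i=0}^{\infty}$ is a $1$-Lipschitz $(k,c)$-quasi-geodesic in $\mathcal{C}(S)$ converging in the Gromov boundary to the point determined by $\hat{\nu}$ (Proposition \ref{prop : 1-nue05}(\ref{nue05 : qg}) and Proposition \ref{prop : bdrycc}). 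Because the curves $\hat{\gamma}_i$ are pairwise distinct, at most one index $i_Y$ satisfies $\hat{\gamma}_{i_Y}=\alpha_Y$. Deleting this one term leaves a tail $\{\hat{\gamma}_i\}_{i>i_Y}$ which is still a quasi-geodesic with constants depending only on $(k,c)$, every vertex of which overlaps $Y$ essentially.

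I would then invoke the Bounded Geodesic Image Theorem (Theorem \ref{thm : bddgeod}) on this tail to obtain a uniform constant $G_{0}$, depending only on the topological type of $S$, with
$$\diam_{Y}\bigl(\{\pi_{Y}(\hat{\gamma}_{i})\}_{i>i_Y}\bigr)\leq G_{0}.$$
In particular $d_Y(\hat{\gamma}_i,\hat{\gamma}_j)\leq G_0$ for all $i,j>i_Y$.

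To bridge this tail to the lamination $\hat{\nu}$, I would pass to a subsequence of $\{\hat{\gamma}_i\}$ converging in the Hausdorff topology of $M_{\infty}(S)$ to a lamination $\xi$ with $\hat{\nu}\subseteq\xi$. Since $\hat{\nu}$ fills $S$ it overlaps $Y$ essentially, and hence so does $\xi$. Lemma \ref{lem : subsurfcomplim} then gives $d_Y(\hat{\gamma}_i,\hat{\nu})\asymp_{1,4}d_Y(\xi,\hat{\nu})$ for $i$ large. The inclusion $\hat{\nu}\subseteq\xi$ together with Lemma \ref{lem : diamproj} forces $d_Y(\xi,\hat{\nu})\leq 2$, so $d_Y(\hat{\gamma}_i,\hat{\nu})$ is bounded by a universal constant. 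A parallel estimate bounds $d_Y(\hat{\mu},\hat{\gamma}_{j})$ for some small $j$: the base of $\hat{\mu}$ contains $\hat{\gamma}_0$ and $\hat{\gamma}_1$, at least one of which overlaps $Y$, so $\pi_Y(\hat{\mu})$ and $\pi_Y(\hat{\gamma}_j)$ sit within the bounded-diameter cluster above up to Lemma \ref{lem : diamproj}. Assembling these estimates by the triangle inequality yields a universal constant $\hat{R}$ with $d_Y(\hat{\mu},\hat{\nu})\leq\hat{R}$, independent of the choice of $Y$.

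The main obstacle is ensuring that the at-most-one exceptional vertex $\hat{\gamma}_{i_Y}=\alpha_Y$ does not break the argument: this is handled by the elementary observation that removing a single vertex from a $1$-Lipschitz quasi-geodesic yields a quasi-geodesic with controlled constants, so the Bounded Geodesic Image Theorem still applies with constants uniform in $Y$. A secondary technical point is that Lemma \ref{lem : subsurfcomplim} is stated for convergence of curves to a single lamination, so one must verify that $\pi_Y(\xi)$ and $\pi_Y(\hat{\nu})$ remain close using $\hat{\nu}\subseteq\xi$; this follows directly from the definition of the projection together with Lemma \ref{lem : diamproj}.
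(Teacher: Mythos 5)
Your first case (every $\hat{\gamma}_i$ overlaps $Y$) and your bridge from the tail of the quasi-geodesic to $\hat{\nu}$ via a Hausdorff limit $\xi\supseteq\hat{\nu}$ and Lemma \ref{lem : subsurfcomplim} match the paper's argument. The gap is in the exceptional case $\hat{\gamma}_{i_Y}=\partial Y$. The Bounded Geodesic Image Theorem is applied in the paper (and correctly by you) only to \emph{connected}, $1$-Lipschitz pieces of the sequence all of whose vertices overlap $Y$; it controls $\diam_Y$ of the tail $\{\hat{\gamma}_i\}_{i\geq i_Y+2}$ and, separately, of the initial segment $\{\hat{\gamma}_{j'}\}_{j'\leq i_Y-2}$. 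But $\hat{\mu}$ lives at the start of the sequence, so to bound $d_Y(\hat{\mu},\hat{\nu})$ you must cross the excised vertex, i.e.\ you need a bound on $d_Y(\hat{\gamma}_{i_Y-2},\hat{\gamma}_{i_Y+2})$. Your assertion that deleting one vertex leaves a quasi-geodesic to which BGI ``still applies with uniform constants'' does not deliver this: the spliced sequence is no longer $1$-Lipschitz at the splice (indeed $\hat{\gamma}_{i_Y-2}\pitchfork\hat{\gamma}_{i_Y+2}$), and BGI genuinely fails without the connectivity hypothesis. For a concrete failure mode, take $\beta_1,\beta_2\subset Y$ with $d_Y(\beta_1,\beta_2)$ arbitrarily large; both overlap $Y$, both are disjoint from $\partial Y$, hence $d_S(\beta_1,\beta_2)\leq 2$, so $\{\beta_1,\beta_2\}$ is a two-term quasi-geodesic all of whose vertices overlap $Y$ with unbounded $Y$-projection diameter. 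In other words, the projection to $Y$ can jump arbitrarily far precisely when the path passes through $\partial Y$, which is exactly what happens here.

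What closes this gap in the paper is a construction-specific geometric fact, not a general principle: applying $(f_1\circ\cdots\circ f_{i-2})^{-1}$ and inspecting Figure \ref{fig : projg-1Y}, one checks that $\pi_Y(\hat{\gamma}_{i-2})$ and $\pi_Y(\hat{\gamma}_{i+2})$ intersect exactly twice in the four-holed sphere $Y$, so $d_Y(\hat{\gamma}_{i-2},\hat{\gamma}_{i+2})=1$. The point is that the large powers $e_{i-1}$ of Dehn twisting about $\hat{\gamma}_i$ are seen only in the \emph{annular} coefficient $d_{\hat{\gamma}_i}$ (Proposition \ref{prop : 1-nue05}(\ref{nue05 : dgi})) and not in the complementary non-annular subsurface $Y$. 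Your proof needs this explicit verification (or some substitute for it); without it the triangle inequality assembling $d_Y(\hat{\mu},\hat{\gamma}_{i_Y-2})$, $d_Y(\hat{\gamma}_{i_Y-2},\hat{\gamma}_{i_Y+2})$, and $d_Y(\hat{\gamma}_{i_Y+2},\hat{\nu})$ cannot be completed, and the theorem does not follow.
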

\begin{proof} By Proposition \ref{prop : 1-nue05} (\ref{nue05 : qg}), $\{\hat{\gamma}_{i}\}_{i=0}^{\infty}$ is a $1-$Lipschitz, $(k,c)-$quasi-geodesic in $\mathcal{C}(S_{0,5})$. Let $G$ be the corresponding constant from Theorem \ref{thm :
  bddgeod} (Bounded Geodesic Image Theorem). 
Let $Y\subseteq S_{0,5}$ be an essential non-annular subsurface. First note that
$Y$ is a four-holed sphere.

 If $\hat{\gamma}_{i}\pitchfork Y$ holds for all
$i\geq 0$, then the Bounded Geodesic Image Theorem guarantees that 
$$\diam_{Y}(\{\hat{\gamma}_{i}\}_{i=0}^{\infty})\leq G.$$
  The lamination $\hat{\nu}$ is filling, so $\pi_{Y}(\hat{\nu})\neq\emptyset$. Now we claim that 
  \begin{equation}\label{eq : dYmn}d_{Y}(\hat{\mu},\hat{\nu})\leq G+6.\end{equation}
  To see this, let $\hat{\gamma}_{j_{n}}$ be a convergent subsequence of $\{\hat{\gamma}_{j}\}_{j=i+2}^{\infty}$ in the $\mathcal{PML}(S)$ topology. By Proposition \ref{prop : bdrycc} the support of the limit of $\hat{\gamma}_{j_{n}}$ is $\hat{\nu}$. After possibly passing to a further subsequence we may assume that $\hat{\gamma}_{j_{n}}$ is also convergent in the Hausdorff topology of $M_{\infty}(S)$. Denote the limit lamination in the Hausdorff topology by $\xi$.  Then $\hat{\nu}\subseteq \xi$ (see e.g. \cite{notesonthurston}). By the bound $\diam_{Y}(\{\hat{\gamma}_{i}\}_{i= 0}^{\infty})\leq G$ we established above, we have that $d_{Y}(\hat{\gamma}_{0},\hat{\gamma}_{j_{n}})\leq G$. Then since $\hat{\gamma}_{j_{n}}\to\xi$ in the Hausdorff topology as $n\to\infty$, by Lemma \ref{lem : subsurfcomplim}, we obtain 
  $$d_{Y}(\xi,\hat{\gamma}_{0})\leq G+4.$$ 
 Furthermore we have that $\hat{\nu}\subseteq\xi$ and $\hat{\gamma}_0\subset\hat{\mu}$. Then since $\diam_{Y}(\xi)\leq 2$ (by Lemma \ref{lem : diamproj}), the difference of the subsurface projection distance in (\ref{eq : dYmn}) and the one above is most at $2$. Which gives us (\ref{eq : dYmn}).
  \medskip
  
  Now suppose that for some integer $i\geq 0$, $\hat{\gamma}_{i}\pitchfork Y$ does not hold. Then since $Y$ is a four-holed sphere inside $S_{0,5}$ we
have that $\partial{Y}=\hat{\gamma}_{i}$.  

Let $j'\leq i-2$. By part (\ref{nue05 : int}) of Proposition \ref{prop : 1-nue05}, $\hat{\gamma}_{i}\pitchfork \hat{\gamma}_{j'}$. Then since $\partial{Y}=\hat{\gamma}_{i}$,
we conclude that $\hat{\gamma}_{j'}\pitchfork Y$ holds. Thus Bounded Geodesic Image
Theorem guarantees that 
$$\diam_{Y}(\{\hat{\gamma}_{j'}\}_{j'=0}^{i-2})\leq G.$$ 
The above bound and the fact that $\hat{\mu}$ contains
$\hat{\gamma}_{0}$ give us the bound  
\begin{equation}d_{Y}(\hat{\mu},\hat{\gamma}_{i-2})\leq G.\label{eq  : dYgi-m}\end{equation}
Let $j\geq i+2$. By part (\ref{nue05 : int}) of Proposition \ref{prop : 1-nue05}, $\hat{\gamma}_{i}\pitchfork \hat{\gamma}_{j}$ holds. Then similarly to above we obtain that 
$$\diam_{Y}(\{\hat{\gamma}_{j}\}_{j= i+2}^{\infty})\leq G.$$
  Then similar to the proof of (\ref{eq : dYmn}) we may obtain
\begin{equation}d_{Y}(\hat{\nu},\hat{\gamma}_{i+2})\leq G+6. \label{eq  : dYgi+n} \end{equation}

By Proposition \ref{prop : 1-nue05} (\ref{nue05 : dgi}) $\hat{\gamma}_{i-2}\pitchfork\hat{\gamma}_{i}$. So $\hat{\gamma}_{i-2}\pitchfork
Y$ holds, because $\hat{\gamma}_{i}=\partial{Y}$. Similarly $
\hat{\gamma}_{i+2}\pitchfork Y$ holds. So
$d_{Y}(\hat{\gamma}_{i-2},\hat{\gamma}_{i+2})$ is defined. We claim that  
\begin{equation}\label{eq : dYgi-+}d_{Y}(\hat{\gamma}_{i-2},\hat{\gamma}_{i+2})=1.\end{equation} 
To see this, let $g$ be the element of $\Mod(S)$ given by the composition $g=f_{1}\circ ...\circ
f_{i-2}$. Applying $g^{-1}$ to the subsurface
coefficient in (\ref{eq : dYgi-+}) we get
$$d_{g^{-1}(Y)}(g^{-1}(\hat{\gamma}_{i-2}),g^{-1}(\hat{\gamma}_{i+2})).$$ 
Thus, to obtain the desired equality, it suffices to show that the above
subsurface coefficient is equal to $1$. The curves 
$$g^{-1}(\hat{\gamma}_{i-2}),...,g^{-1}(\hat{\gamma}_{i+2})$$ are the
curves $\hat{\gamma}_{0},...,\hat{\gamma}_{4}$ in Figure \ref{fig :
  nue05}, respectively, except that the twist of the curve $g^{-1}(\hat{\gamma}_{i+2})$ about
$g^{-1}(\hat{\gamma}_{i})=\hat{\gamma}_{2}$ is $e_{i-1}$ rather than $e_{1}$. 

\begin{figure}
\centering
\includegraphics[scale=.19]{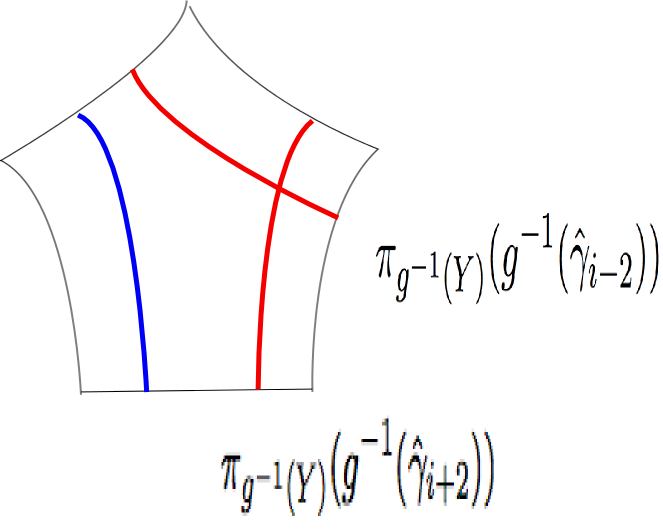}
\caption{The subsurface $g^{-1}(Y)\subset S_{0,5}$ is the four-holed sphere with boundary $\partial g^{-1}(Y)=\hat{\gamma}_{2}$. The curves $\pi_{g^{-1}(Y)}(g^{-1}(\hat{\gamma}_{i-2}))$ and $\pi_{g^{-1}(Y)}(g^{-1}(\hat{\gamma}_{i+2}))$ are shown in the figure.} 
\label{fig : projg-1Y}
\end{figure}

Since $\partial{Y}=\hat{\gamma}_{i}$, the subsurface $g^{-1}(Y)$ is the four holed sphere with
boundary $\hat{\gamma}_{2}$, see Figure \ref{fig : projg-1Y}. We have that
$g^{-1}(\hat{\gamma}_{i-2})=\hat{\gamma}_{0}$. Furthermore, the curve $g^{-1}(\hat{\gamma}_{i+2})$ is the curve
$\hat{\gamma}_{4}$ in Figure \ref{fig : nue05}, except that the twist of the curve $g^{-1}(\hat{\gamma}_{i+2})$ about
$\hat{\gamma}_{2}$ is $e_{i-1}$ rather than $e_{1}$.  The
projection of the curves $g^{-1}(\hat{\gamma}_{i-2})$ and $g^{-1}(\hat{\gamma}_{i+2})$ to the subsurface $g^{-1}(Y)$
are shown in Figure \ref{fig : projg-1Y}. The $\mathcal{C}(g^{-1}(Y))-$distance of these two curves is
$1$, because these are two curves with (minimal) intersection number $2$
on the four-holed sphere $g^{-1}(Y)$, yielding the desired equality.

Note that $\hat{\mu}$ is a marking and $\hat{\nu}$ fills the surface. By the triangle inequality and the bounds (\ref{eq  : dYgi-m}), (\ref{eq : dYgi+n}) and (\ref{eq : dYgi-+}) we have
\begin{eqnarray*}
d_{Y}(\hat{\mu},\hat{\nu})&\leq& d_{Y}(\hat{\mu},\hat{\gamma}_{i-2})+d_{Y}(\hat{\gamma}_{i-2},\hat{\gamma}_{i+2})+d_{Y}(\hat{\gamma}_{i+2},\hat{\nu})\\
&+&\diam_{Y}(\hat{\gamma}_{i-2})+\diam_{Y}(\hat{\gamma}_{i+2})\\
&\leq& 2G+6+1+4.
\end{eqnarray*}
We conclude that the $Y$ subsurface coefficient of $\hat{\mu}$ and $\hat{\nu}$ is bounded above by $\hat{R}:=2G+11$, as was desired.
\end{proof}

\begin{thm}\label{thm : abd}
There is a constant $R'>0$, so that $d_{\beta}(\hat{\mu},\hat{\nu})\leq R'$ for any curve $\beta$ which is not in the sequence $\{\hat{\gamma}_{i}\}_{i=0}^{\infty}$.
\end{thm}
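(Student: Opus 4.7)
The plan is to mirror the two-case split in the proof of Theorem~\ref{thm : nabddcomb}, working with the annular subsurface having core $\beta$ in place of the non-annular $Y$. In the first case, if $\hat{\gamma}_i \pitchfork \beta$ for every $i \geq 0$, the $1$-Lipschitz $(k,c)$-quasi-geodesic property (Proposition~\ref{prop : 1-nue05}(\ref{nue05 : qg})) combined with the Bounded Geodesic Image Theorem (Theorem~\ref{thm : bddgeod}) gives $\diam_{\beta}\{\hat{\gamma}_i\} \leq G$. Combining this with Lemma~\ref{lem : subsurfcomplim} applied to a Hausdorff-convergent subsequence whose limit contains $\hat{\nu}$ (as extracted in the proof of Theorem~\ref{thm : nabddcomb}) and using $\hat{\gamma}_0 \in \hat{\mu}$, one concludes $d_{\beta}(\hat{\mu}, \hat{\nu}) \leq G + O(1)$.

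In the second case, suppose some $\hat{\gamma}_m$ is disjoint from $\beta$. The low complexity $\xi(S_{0,5}) = 2$ forbids two consecutive indices from both lying in $I_{\beta} := \{i : \hat{\gamma}_i \cap \beta = \emptyset\}$, since otherwise $\{\hat{\gamma}_m, \hat{\gamma}_{m+1}, \beta\}$ would be three pairwise disjoint essential curves, exceeding the pants decomposition size. Together with the quasi-geodesic property bounding $\diam(I_{\beta}) \leq 2k + c$, this makes $|I_{\beta}|$ uniformly bounded. Writing $I_{\beta} = \{i_1 < \cdots < i_l\}$, the same argument as in the first case applied to the head $\{\hat{\gamma}_i\}_{i < i_1}$, the tail $\{\hat{\gamma}_i\}_{i > i_l}$, and each ``between-gap'' segment $\{\hat{\gamma}_i\}_{i_j < i < i_{j+1}}$ (all of whose curves overlap $\beta$) controls all contributions except the $l$ ``single-skip gaps'' $d_{\beta}(\hat{\gamma}_{i_j - 1}, \hat{\gamma}_{i_j + 1})$. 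The edge case $i_1 = 0$ is handled using the transversal to $\hat{\gamma}_0$ in $\hat{\mu}$.

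The crucial new step, and the main obstacle, is to bound each single-skip gap uniformly. I would conjugate by $g_j := f_1 \circ \cdots \circ f_{i_j}$ and use mapping class group invariance of subsurface projections, together with the cyclic identification $\hat{\gamma}_i = \rho^i(\hat{\gamma}_0)$ for $i = 0,1,2$ and the fact that $\mathcal{D}_{\hat{\gamma}_2}^e(\hat{\gamma}_1) = \hat{\gamma}_1$ (since $\hat{\gamma}_1$ and $\hat{\gamma}_2$ are disjoint). Explicit computation then yields $g_j^{-1}(\hat{\gamma}_{i_j+1}) = \hat{\gamma}_1$ and $g_j^{-1}(\hat{\gamma}_{i_j-1}) = \rho^{-1} \mathcal{D}_{\hat{\gamma}_2}^{-e_{i_j}}(\hat{\gamma}_0)$. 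Acting further by $\rho$ (using $\rho(\hat{\gamma}_1) = \hat{\gamma}_2$) and then by $\mathcal{D}_{\hat{\gamma}_2}^{e_{i_j}}$ (using $\mathcal{D}_{\hat{\gamma}_2}^e(\hat{\gamma}_2) = \hat{\gamma}_2$) transforms the gap into
\[
d_{\beta}(\hat{\gamma}_{i_j - 1}, \hat{\gamma}_{i_j + 1}) = d_{\alpha^*}(\hat{\gamma}_0, \hat{\gamma}_2)
\]
for the curve $\alpha^* = \mathcal{D}_{\hat{\gamma}_2}^{e_{i_j}} \rho\, g_j^{-1}(\beta)$. Since $\hat{\gamma}_0$ and $\hat{\gamma}_2$ are \emph{fixed} curves on $S_{0,5}$ independent of the twist exponents $\{e_i\}$, the intersection-number bound~(\ref{eq : dYi}) gives $d_{\alpha^*}(\hat{\gamma}_0, \hat{\gamma}_2) \leq 2 i(\hat{\gamma}_0, \hat{\gamma}_2) + 1$, a uniform constant. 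Summing the uniformly bounded head, tail, between-gap, and at most $l$ single-skip gap contributions then produces the desired bound $d_{\beta}(\hat{\mu}, \hat{\nu}) \leq R'$.
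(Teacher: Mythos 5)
Your argument is correct, and it follows the same overall strategy as the paper's proof: split on whether $\beta$ overlaps every $\hat{\gamma}_i$, apply the Bounded Geodesic Image Theorem (Theorem \ref{thm : bddgeod}) to the maximal subsegments of the quasi-geodesic all of whose curves overlap $\beta$ (together with the Hausdorff-limit argument to replace the tail by $\hat{\nu}$), and bridge across the exceptional indices by intersection-number bounds obtained after conjugating back to the model curves of Figure \ref{fig : nue05}. Where you differ is in the organization of the bridging. The paper fixes one index $i$ with $\beta\cap\hat{\gamma}_i=\emptyset$ and never uses $\hat{\gamma}_{i\pm1},\hat{\gamma}_{i\pm3}$: it invokes Proposition \ref{prop : 1-nue05} (\ref{nue05 : fill}) to see that $\beta$ overlaps every $\hat{\gamma}_j$ with $|j-i|\ge4$, a complementary-curve argument to see $\beta\pitchfork\hat{\gamma}_{i\pm2}$, and then bounds the three bridges $d_\beta(\hat{\gamma}_{i-4},\hat{\gamma}_{i-2})$, $d_\beta(\hat{\gamma}_{i-2},\hat{\gamma}_{i+2})$, $d_\beta(\hat{\gamma}_{i+2},\hat{\gamma}_{i+4})$ by $5$ via (\ref{eq : dYi}), the middle one by inspecting the projections to the four-holed sphere complementary to $\hat{\gamma}_i$ (Figure \ref{fig : projg-1Y}). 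You instead enumerate the full set $I_\beta$ of bad indices, note it has no two consecutive elements and uniformly bounded size, and bridge each bad index by the single jump $d_\beta(\hat{\gamma}_{i_j-1},\hat{\gamma}_{i_j+1})$, reduced by your untwisting conjugation to $d_{\alpha^*}(\hat{\gamma}_0,\hat{\gamma}_2)\le 2\,i(\hat{\gamma}_0,\hat{\gamma}_2)+1$, a constant. Your conjugation correctly pushes the large twist $e_{i_j}$ into the annulus $\alpha^*$, which is exactly what makes the bound uniform; it plays the same role as the paper's observation that the projections in Figure \ref{fig : projg-1Y} intersect only twice regardless of $e_{i-1}$. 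Your version is somewhat more systematic in that it explicitly accommodates $\beta$ being disjoint from several curves of the sequence (the paper's $\pm2,\pm4$ jumps sidestep this rather than confront it), at the cost of a longer chain. Two cosmetic points: the diameter bound on $I_\beta$ coming from the quasi-geodesic property should be $k(2+c)$ rather than $2k+c$, and Theorem \ref{thm : bddgeod} is stated for infinite sequences, so you should note (as the paper implicitly does) that it applies equally to the finite subsegments you use.
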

\begin{proof}
By Proposition \ref{prop : 1-nue05} (\ref{nue05 : qg}), $\{\hat{\gamma}_{i}\}_{i=0}^{\infty}$ is a $1-$Lipschitz, $(k,c)-$quasi-geodesic in $\mathcal{C}(S_{0,5})$. Let $G$ be the corresponding constant from Theorem \ref{thm :
  bddgeod} (Bounded Geodesic Image Theorem).

If $\beta$ intersects all of the curves in the sequence. Then similar to (\ref{eq : dYmn}) in the proof of Theorem \ref{thm : nabddcomb} we may obtain that
$$d_{\beta}(\hat{\mu},\hat{\nu})\leq G+5.$$ 
Now suppose that for some integer $i>0$ the curve $\beta$ is disjoint from $\hat{\gamma}_i$. By Proposition 3.1(2)  for every $j\geq i+4$, the curves $\hat{\gamma}_j$ and $\hat{\gamma}_i$ fill $S_{0,5}$. So we may deduce that $\beta\pitchfork\hat{\gamma}_j$ holds. Then Theorem \ref{thm : bddgeod} guarantees that
\begin{equation}\label{eq : dbngi-}d_{\beta}(\hat{\mu},\hat{\gamma}_{i-4})\leq G.\end{equation}
Similarly for every $j'\leq i-4$, $\beta\pitchfork\hat{\gamma}_{j'}$ holds. Then 
$$\diam_{\beta}(\{\gamma_{j'}\}_{j'=0}^{ i-4}).$$
by the Bounded Geodesic Image Theorem. Then similarly to (\ref{eq : dYmn}) we may obtain
\begin{equation}\label{eq : dbngi+}d_{\beta}(\hat{\nu},\hat{\gamma}_{i+4})\leq G+5.\end{equation}
Let $g=f_{1}\circ ...\circ f_{i}$. Applying $g^{-1}$ to the curves $\hat{\gamma}_{i},...,\hat{\gamma}_{i+4}$, we obtain the curves $\hat{\gamma}_{0},...,\hat{\gamma}_{4}$ in Figure \ref{fig : nue05}, respectively. The difference is that $g^{-1}(\hat{\gamma}_{i+4})$ has $e_{i+1}$ twists. The only curve disjoint from $\hat{\gamma}_{0}$ and $\hat{\gamma}_{2}$ is $\hat{\gamma}_{1}$. Therefore, the only curve disjoint from $\hat{\gamma}_{i}$ and $\hat{\gamma}_{i+2}$ is $\hat{\gamma}_{i+1}$. The curve $\beta$ is not in the sequence $\{\hat{\gamma}_{i}\}_{i=0}^{\infty}$, in particular $\beta\neq \hat{\gamma}_{i+1}$. Moreover $\beta$ is disjoint from $\hat{\gamma}_{i}$. Thus $\beta\pitchfork \hat{\gamma}_{i+2}$ holds. Furthermore, the curves $\hat{\gamma}_{0}$ and $g^{-1}(\hat{\gamma}_{i+4})$ fill $S$ ($g^{-1}(\hat{\gamma}_{i+4})$ is $\hat{\gamma}_{4}$ with a different number of parallel strands). Thus $\hat{\gamma}_{i}$ and $\hat{\gamma}_{i+4}$ fill $S$. Then since $\beta$ is disjoint from $\hat{\gamma}_{i}$, $\beta\pitchfork\hat{\gamma}_{i+4}$ holds.
We showed that $\beta\pitchfork\hat{\gamma}_{i+2}$ and $\beta\pitchfork\hat{\gamma}_{i+4}$, therefore $d_{\beta}(\hat{\gamma}_{i+2},\hat{\gamma}_{i+4})$ is defined. Now since $i(\hat{\gamma}_{i+2},\hat{\gamma}_{i+4})=2$, by (\ref{eq : dYi}) we obtain the bound
\begin{equation}\label{eq : dbgi+}d_{\beta}(\hat{\gamma}_{i+2},\hat{\gamma}_{i+4})\leq 5.\end{equation}
Similarly, we may obtain the bound
 \begin{equation}\label{eq : dbgi-}d_{\beta}(\hat{\gamma}_{i-2},\hat{\gamma}_{i-4})\leq 5.\end{equation}
Let $g=f_{1}\circ ...\circ f_{i-2}$. Applying $g^{-1}$ to the curves $\hat{\gamma}_{i-2},...,\hat{\gamma}_{i+2}$ we obtain the first five curves in Figure \ref{fig : nue05}, with the difference that the last curve has $e_{i-1}$ twists. Let $Y\subset S_{0,5}$ be the the four-holed sphere with boundary curve $\hat{\gamma}_{i}$. Then $\beta\in \mathcal{C}_{0}(Y)$. The curves $\pi_{g^{-1}(Y)}(g^{-1}(\hat{\gamma}_{i-2}))$ and $\pi_{g^{-1}(Y)}(g^{-1}(\hat{\gamma}_{i+2}))$ are shown in Figure \ref{fig : projg-1Y}. These two curves intersect twice. Thus by (\ref{eq : dYi}) we have
   \begin{equation}\label{eq : dbgi+-}d_{\beta}(\hat{\gamma}_{i-2},\hat{\gamma}_{i+2})\leq 5.\end{equation}
  The bounds (\ref{eq : dbngi-}), (\ref{eq : dbngi+}), (\ref{eq : dbgi+}), (\ref{eq : dbgi-}) and (\ref{eq : dbgi+-}) for the $\beta$ subsurface coefficients combined by the triangle inequality give us the bound $R':=2G+29$. 
\end{proof}

We proceed to construct minimal, filling, non-uniquely ergodic
laminations on any surface $S_{g,0}$ of genus $g\geq 2$ with control on
the subsurface coefficients of the laminations. We construct the
laminations by an appropriate lift of the lamination we described on
$S_{0,5}$ using 2-dimensional orbifolds and their orbifold
covers. Here, we replace each puncture with a marked point
on the surface. 

Let $\mathcal{S}_{0,5}$ be the $2$ sphere equipped with an orbifold
structure with five orbifold points of order $2$ at the $5$ marked points of
$S_{0,5}$. Let $\mathcal{S}_{0,6}$ be the 2 sphere with an orbifold
structure with orbifold points of order $2$ at the marked points of $S_{0,6}$. Let $\mathcal{S}_{g,0}$ ($g\geq 2$) be $S_{g,0}$ equipped
with an orbifold structure with no orbifold point ( i.e. a manifold structure).  Let
$$f:\mathcal{S}_{0,6}\to \mathcal{S}_{0,5} \ \ \ \text{and} \ \ \  h:\mathcal{S}_{2,0}\to
\mathcal{S}_{0,6}$$ 
be the orbifold covering maps shown at the top left
and right of Figure~\ref{fig : sgs2}, respectively. Given $g\geq 2$,
let $\sigma_{g}:\mathcal{S}_{g,0}\to \mathcal{S}_{2,0}$ be the
covering map given at the bottom of Figure \ref{fig : sgs2}. Let
$F_{g}=\sigma_{g}\circ h\circ f$. Let $\nu$ be the lamination
$\nu=F_{g}^{-1}(\hat{\nu})$.

Recall the sequence of curves $\{\hat{\gamma}_{i}\}_{i=0}^{\infty}$. Denote the surface $S_{g,0}$ by $S$ and the covering map $F_{g}$ by $F$.

\begin{thm}\label{thm : g0qg}
There are constants $k\geq 1$ and $c\geq 0$ so that the sequence $\{F^{-1}(\hat{\gamma_{i}})\}_{i=0}^{\infty}$ is a $(k,c)-$quasi-geodesic in $\mathcal{C}(S)$.
\end{thm}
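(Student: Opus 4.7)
The plan is to deduce this from Proposition 3.1(3), which says $\{\hat{\gamma}_i\}_{i=0}^{\infty}$ is a $(k,c)$-quasi-geodesic in $\mathcal{C}(\mathcal{S}_{0,5})$, by invoking the fact that a finite (orbifold) covering map induces a quasi-isometric embedding on curve complexes via the preimage map. More precisely, by the Rafi--Schleimer theorem on covers and curve complexes, for any finite-degree (orbifold) cover $\pi:\tilde{S}\to S$ the assignment $\alpha\mapsto\pi^{-1}(\alpha)$ is a quasi-isometric embedding $\mathcal{C}(S)\hookrightarrow\mathcal{C}(\tilde{S})$ with constants depending only on the topological data of the cover. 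Since the quasi-isometric embedding property is preserved under composition, applying this to each of the three maps $f$, $h$, $\sigma_g$ in turn shows that $F^{-1}:\mathcal{C}(\mathcal{S}_{0,5})\to\mathcal{C}(S_{g,0})$ is a quasi-isometric embedding.

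Next, I would observe that the image of any quasi-geodesic under a quasi-isometric embedding is again a quasi-geodesic, with constants depending only on the original quasi-geodesic constants and the quasi-isometric embedding constants. Applied to the sequence $\{\hat{\gamma}_i\}_{i=0}^\infty$, this gives the desired quasi-geodesic property of $\{F^{-1}(\hat{\gamma}_i)\}_{i=0}^\infty$ in $\mathcal{C}(S)$. A minor technical point is that $F^{-1}(\hat{\gamma}_i)$ is a multi-curve rather than a single vertex; it corresponds to a simplex in $\mathcal{C}(S)$, but by Lemma \ref{lem : diamproj} (or simply because vertices of a simplex are within distance $1$ of each other in the curve complex), replacing the multi-curve by any of its component curves affects distances by at most an additive constant, so this does not disturb the quasi-geodesic property.

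The main obstacle, should one choose not to rely on the black-box Rafi--Schleimer theorem, is to verify directly that preimages of curves under the orbifold covers behave well with respect to distances in the curve complex. One would need to check that if $d_{\mathcal{S}_{0,5}}(\alpha,\beta)$ is large, then $d_S(F^{-1}(\alpha),F^{-1}(\beta))$ is correspondingly large. The upper bound $d_S(F^{-1}(\alpha),F^{-1}(\beta))\leq K\cdot d_{\mathcal{S}_{0,5}}(\alpha,\beta)$ is easy since disjoint curves downstairs have disjoint preimages upstairs. The lower bound is the substantive content: disjoint curves upstairs project to curves downstairs with bounded intersection number (bounded by the degree of the cover), so any path in $\mathcal{C}(S)$ connecting $F^{-1}(\alpha)$ and $F^{-1}(\beta)$ projects to a sequence of curves in $\mathcal{S}_{0,5}$ with bounded successive intersections, hence a coarse path in $\mathcal{C}(\mathcal{S}_{0,5})$ connecting $\alpha$ and $\beta$. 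This gives the multiplicative distortion constant and completes the quasi-isometric embedding statement, from which the theorem follows.
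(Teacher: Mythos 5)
Your proposal is correct and follows essentially the same route as the paper: the paper also deduces the result by citing Rafi--Schleimer (Theorem 8.1 of the ``Covers and the curve complex'' paper) to get that $\alpha\mapsto F^{-1}(\alpha)$ is a quasi-isometry $\mathcal{C}(S_{0,5})\to\mathcal{C}(S_{g,0})$ with constants depending only on the degree $4(g-1)$, and then applies Proposition \ref{prop : 1-nue05} (\ref{nue05 : qg}). The only cosmetic difference is that you factor $F$ through the three covers and compose quasi-isometric embeddings, whereas the paper invokes the theorem once for $F$ itself.
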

\begin{proof}
By Theorem 8.1 of \cite{coverscc} we have that the set-valued map that assigns to each simple closed curve $\alpha$ on the orbifold $\mathcal{S}_{0,5}$ the component curves of $F^{-1}(\alpha)$ in the orbifold cover $\mathcal{S}_{g,0}$ is a $(Q,Q)-$quasi-isometry from
$\mathcal{C}(S_{0,5})$ to $\mathcal{C}(S_{g,0})$, where $Q\geq 1$ is a
constant depending only on the degree of the cover $4(g-1)$. Then the theorem follows from Proposition \ref{prop : 1-nue05} (\ref{nue05 : qg}).
\end{proof}

For each $i\geq 0$ let $\gamma_{i}$ be a component curve of $F^{-1}(\hat{\gamma}_{i})$. By Theorem \ref{thm : g0qg} we have 
$$d_{S}(\gamma_{i},\gamma_{j})\geq\frac{1}{k}|i-j|-c.$$
Let $d=2k+kc$. If $|i-j|\geq d$, then by the above inequality we have
$$d_{S}(\gamma_{i},\gamma_{j})\geq 2,$$
which implies that
\begin{equation}\label{eq : intgigjcomp}\gamma_{j}\pitchfork \gamma_{i}\end{equation}
 holds.

\begin{thm}\label{thm : nueg0}Let the sequence of curves $\{\gamma_{i}\}_{i=0}^{\infty}$, the marking $\mu$ and the lamination $\nu$ be as above. There are constants $K\geq 1$ and $C\geq 0$ depending only on the degree of the cover such that we have
\begin{enumerate}

\item \label{nueg0 : dgi} For any $i\geq d$, $j\geq i+d$ and $j'\leq i-d$ we have $d_{\gamma_{i}}(\gamma_{j},\gamma_{j'})\asymp_{K,C} e_{i-1}$.
\item  \label{nueg0 : dgimn}$d_{\gamma_{i}}(\mu,\nu)\asymp_{K,C} e_{i-1}$ for all $i\geq 1$.
\item \label{nueg0 : nabd} For any essential, non-annular subsurface $W$
  we have $d_{W}(\mu,\nu)\leq R$. 
\item \label{nueg0 : nue} The lamination $\nu$ is a minimal filling non-uniquely
  ergodic lamination on $S_{g,0}$. 
\end{enumerate}
\end{thm}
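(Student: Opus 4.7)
The plan is to transfer each of the four statements from their established analogues on $\mathcal{S}_{0,5}$ (Propositions \ref{prop : 1-nue05}, \ref{prop : 2-nue05} and Theorems \ref{thm : nabddcomb}, \ref{thm : abd}) up through the orbifold cover $F = \sigma_g \circ h \circ f$ of degree $4(g-1)$. The unifying tool is that finite covers induce quasi-isometries both of curve complexes (\cite{coverscc}, as already invoked in Theorem \ref{thm : g0qg}) and, analogously, of annular subsurface complexes, so that subsurface coefficients upstairs agree with their downstairs counterparts up to multiplicative and additive constants depending only on the degree.

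For part (\ref{nueg0 : dgi}), the intersection property (\ref{eq : intgigjcomp}) makes the annular projection well-defined. The lower bound $d_{\gamma_i}(\gamma_j,\gamma_{j'}) \succeq e_{i-1}$ is transferred from Proposition \ref{prop : 1-nue05}(\ref{nue05 : dgi}) via the covering relation between the annular cover of $S$ at $\gamma_i$ and that of $\mathcal{S}_{0,5}$ at $\hat{\gamma}_i$: the local twist numbers transfer with multiplicity at worst $4(g-1)$. The upper bound follows from Theorem \ref{thm : bddgeod} (Bounded Geodesic Image) applied to the tails of the $(k,c)$-quasi-geodesic $\{\gamma_k\}$ (Theorem \ref{thm : g0qg}) on either side of $\gamma_i$, combined with the upper bound from Proposition \ref{prop : 1-nue05}(\ref{nue05 : dgi}) downstairs. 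Part (\ref{nueg0 : dgimn}) then follows from (\ref{nueg0 : dgi}) by letting $j \to \infty$ with $j' = 0$: a subsequence $\gamma_{j_n}$ converges in $\mathcal{PML}(S)$ to a measured lamination supported on $\nu$ (by Proposition \ref{prop : bdrycc}) and, passing to a Hausdorff-convergent further subsequence, Lemma \ref{lem : subsurfcomplim} converts the estimate of part (\ref{nueg0 : dgi}) into the desired bound on $d_{\gamma_i}(\mu,\nu)$; finitely many small $i$ are absorbed into the constants.

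Part (\ref{nueg0 : nabd}) is the principal technical obstacle and parallels the proof of Theorem \ref{thm : nabddcomb}. Let $W \subsetneq S$ be a proper essential non-annular subsurface. If every $\gamma_i$ overlaps $W$, then Bounded Geodesic Image applied to $\{\gamma_i\}$ gives $\diam_W(\{\gamma_i\}) \leq G$, and the Hausdorff-limit argument based on Lemma \ref{lem : subsurfcomplim} (as in the first case of Theorem \ref{thm : nabddcomb}) bounds $d_W(\mu,\nu)$. Otherwise some $\gamma_i$ is disjoint from $W$; by (\ref{eq : intgigjcomp}), $\gamma_{j'} \pitchfork W$ for $j' \leq i-d$ and $\gamma_j \pitchfork W$ for $j \geq i+d$, so Bounded Geodesic Image again bounds $d_W(\mu,\gamma_{i-d})$ and $d_W(\gamma_{i+d},\nu)$. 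The remaining middle piece $d_W(\gamma_{i-d},\gamma_{i+d})$ is controlled by descending via $F$: the push-down $F(W)$ is a non-annular essential (orbifold) subsurface of $\mathcal{S}_{0,5}$ whose boundary contains $\hat{\gamma}_i$, so Theorem \ref{thm : nabddcomb} gives $d_{F(W)}(\hat{\gamma}_{i-d},\hat{\gamma}_{i+d})$ uniformly bounded, and cover-projection compatibility lifts this bound upstairs. Summing via the triangle inequality gives $d_W(\mu,\nu) \leq R$. The main difficulty is precisely here: handling the orbifold structure in the descent $W \leadsto F(W)$ and making the covering-compatibility of annular and non-annular subsurface projections uniform enough to invoke the downstairs bound.

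Finally, for (\ref{nueg0 : nue}), filling of $\nu = F^{-1}(\hat{\nu})$ is immediate since $F$ carries complementary regions of $\nu$ to complementary regions of $\hat{\nu}$, which are disks or peripheral annuli by filling of $\hat{\nu}$. For minimality, the $(k,c)$-quasi-geodesic $\{\gamma_i\}$ in $\mathcal{C}(S)$ converges to a point in the Gromov boundary representing, via Proposition \ref{prop : bdrycc}, a minimal filling lamination $\lambda$; the Hausdorff limit of $\hat{\gamma}_i$ is exactly $\hat{\nu}$ (extra leaves would have to lie in the disk/peripheral-annulus complementary regions of $\hat{\nu}$, which cannot support leaves of a geodesic lamination), so the Hausdorff limit of $\gamma_i$ is contained in $F^{-1}(\hat{\nu}) = \nu$ and contains $\lambda$; since $\lambda$ is filling, $\nu \setminus \lambda$ would lie in disk complementary regions of $\lambda$ and is therefore empty, giving $\nu = \lambda$ minimal. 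Non-unique ergodicity follows from Proposition \ref{prop : 2-nue05}(\ref{nue05 : nue}): the two non-proportional transverse measures on $\hat{\nu}$ pull back under $F$ to two non-proportional transverse measures on $\nu$.
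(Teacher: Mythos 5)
Your treatment of parts (\ref{nueg0 : dgi}), (\ref{nueg0 : dgimn}) and the filling/non-unique-ergodicity claims in (\ref{nueg0 : nue}) is broadly in line with the paper (the key point in (1)--(2) is that the annuli with core $\gamma_i$ are \emph{symmetric} subsurfaces, so the annular coefficients upstairs and downstairs are quasi-equal by the Rafi--Schleimer covering theorem; your ``twist numbers transfer'' and BGI arguments amount to the same thing). But part (\ref{nueg0 : nabd}) has a genuine gap, and you have located it yourself without closing it. When some $\gamma_i$ is disjoint from $W$, you propose to ``push down'' $W$ to an essential subsurface $F(W)\subseteq\mathcal{S}_{0,5}$ and invoke Theorem \ref{thm : nabddcomb} downstairs. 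This only makes sense when $W$ is \emph{symmetric}, i.e.\ a component of $F^{-1}(Y)$ for some $Y\subseteq S_{0,5}$; a generic essential non-annular subsurface of $S_{g,0}$ (even one disjoint from some $\gamma_i$) is not of this form, $F|_W$ is not injective, and $F(W)$ is not an embedded essential suborbifold. The paper handles exactly this dichotomy differently: for symmetric $W$ it descends as you suggest and uses Theorem \ref{thm : nabddcomb}, but for \emph{non-symmetric} $W$ it invokes Lemma 7.2 of \cite{coverscc}, which gives the uniform bound $d_W(\mu,\nu)\leq 2T_e+1$ via Rafi's characterization of short curves along Teichm\"uller geodesics. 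That external input is the actual content of part (3) and is absent from your argument; the case analysis ``all $\gamma_i$ overlap $W$ / some $\gamma_i$ misses $W$'' does not substitute for the symmetric/non-symmetric dichotomy.

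A secondary problem is your minimality argument in (\ref{nueg0 : nue}). The claim that the Hausdorff limit of $\hat{\gamma}_i$ is exactly $\hat{\nu}$ because ``extra leaves would have to lie in the disk complementary regions, which cannot support leaves of a geodesic lamination'' is false: the Hausdorff limit of curves converging to a minimal filling lamination typically contains finitely many extra non-compact leaves whose half-leaves spiral onto $\hat{\nu}$ and which cross the complementary ideal polygons as diagonals -- these are perfectly legitimate leaves of a geodesic lamination. So you cannot conclude that the Hausdorff limit of $\gamma_i$ is contained in $\nu$, and the identification $\nu=\lambda$ does not follow as stated. The paper instead proves minimality directly: it passes to the corresponding measured foliations, notes that a leaf of $\mathcal{F}=F^{-1}(\hat{\mathcal{F}})$ missing an open set $U$ (shrunk so that $F|_U$ is a homeomorphism) would push down to a leaf of $\hat{\mathcal{F}}$ missing the open set $F(U)$, contradicting minimality of $\hat{\mathcal{F}}$. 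You should replace your Hausdorff-limit argument with this (or an equivalent direct density argument for leaves of $\nu$ itself).
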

\begin{proof}

First we prove part (\ref{nueg0 : nue}). Since $\hat{\nu}$ is a
non-uniquely ergodic lamination there are curves $\alpha,\beta\in
\mathcal{C}_{0}(S_{0,5})$ and measures $\hat{m}$ and $\hat{m}'$ supported on
$\hat{\nu}$ so that 
$$\frac{\hat{m}(\alpha)}{\hat{m}'(\alpha)}\neq\frac{\hat{m}(\beta)}{\hat{m}'(\beta)}.$$ 
Let $m=F^{*}(\hat{m})$ and
$m'=F^{*}(\hat{m})$ be the pull-backs of $\hat{m}$ and $\hat{m}'$,
respectively. Then $m$ and $m'$ are measures supported
on $\nu$. Let $\tilde{\alpha}$ be a  component of $F^{-1}(\alpha)$
and $\tilde{\beta}$ be a component of $F^{-1}(\beta)$. Then $m(\tilde{\alpha})=\hat{m}(\alpha)$ and $m(\tilde{\beta})=\hat{m}(\beta)$. Therefore
 $$\frac{m(\tilde{\alpha})}{m'(\tilde{\alpha})}\neq\frac{m(\tilde{\beta})}{m'(\tilde{\beta})},$$ 
and hence the lamination
$\nu$ is a non-uniquely ergodic lamination.  

We proceed to show that the lamination $\nu$ is minimal and filling. We use the facts
stated in $\S$\ref{subsec : cc} about measured laminations and foliations and the
correspondence between them. Equip $\hat{\nu}$ with a transverse
measure $\hat{m}$ and $\nu$ with measure the measure $m=F^{*}(\hat{m})$. Let $(\hat{\mathcal{F}},\hat{m})$ and $(\mathcal{F},m)$ be
the measured foliations corresponding to $(\hat{\nu},\hat{m})$ and
$(\nu,m)$, respectively. Note that $\mathcal{F}=F^{-1}(\hat{\mathcal{F}})$. Since the lamination $\hat{\nu}$ is minimal, the foliation $\hat{\mathcal{F}}$
is minimal. By the result of Hubbard and Masur \cite{hubmas} given a
complex structure on the surface $S_{0,5}$ there is a unique quadratic
differential $\hat{q}$ with vertical measured foliation
$(\hat{\mathcal{F}},\hat{m})$. Then $(\mathcal{F},m)$ is the vertical measured foliation of the quadratic differential $q=F^{*}(\hat{q})$. Since $\mathcal{F}$ is a minimal foliation on
$\mathcal{S}_{0,5}$ any leaf of $\mathcal{F}$ is dense in the
surface. Therefore, the lift of each leaf of $\mathcal{F}$ to
$\mathcal{S}_{g,0}$ is dense. To see this, let $l$ be a leaf of
$\mathcal{F}$. Suppose to the contrary that $l$ misses an open set $U$ in
$\mathcal{S}_{g,0}$. We may shrink $U$ and assume that the restriction
of $F$ to $U$ is a homeomorphism. But then $F(l)$ which is a leaf of
$\hat{\mathcal{F}}$ misses $F(U)$, which is an open subset of
$\mathcal{S}_{0,5}$. This contradicts the fact that
$\hat{\mathcal{F}}$ is a minimal foliation of $S_{0,5}$. Therefore $\mathcal{F}$ is minimal and consequently $\nu$ is as well. 

To see that the lamination $\nu$ fills $S$, note that given $\alpha\in
\mathcal{C}_{0}(S)$, a homotopy that realizes $\alpha$ and $\nu$ as
disjoint subsets of $\mathcal{S}_{g,0}$ composed with $F$ gives us a
homotopy which realizes $F(\alpha)$ (an essential closed curve on
$S_{0,5}$) and $\hat{\nu}$ as disjoint subsets of $S_{0,5}$. But this
contradicts the fact that $\hat{\nu}$ fills $S_{0,5}$.  
\medskip  

Using the terminology of \cite{coverscc} we say that a
subsurface $W\subseteq S_{g,0}$ is a symmetric subsurface if it is a
component of $F^{-1}(Y)$ for some subsurface $Y\subseteq
S_{5,0}$. 

When the subsurface $W$ is not symmetric by Lemma 7.2 of \cite{coverscc},
we have 
\begin{equation}\label{eq : dWnsym}d_{W}(\mu,\nu)\leq 2T_{e}+1,\end{equation}
 for a constant $T_{e}>0$
depending only on the degree of the cover and the constant $e$ which
comes from Rafi's characterization of short curves along
Teichm\"{u}ller geodesics; see $\S 4$ of \cite{coverscc} and for more
detail \cite{rshteich}, \cite{rteichhyper}. 

When the subsurface $W$ is an essential symmetric subsurface we have
$$d_{W}(\mu,\nu)\leq d_{Y}(\hat{\mu},\hat{\nu})$$ 
(see the proof of Theorem 8.1 in \cite{coverscc}). Furthermore, by Theorem \ref{thm : nabddcomb}, we know
that there exists $\hat{R}>0$ so that
$$d_{Y}(\hat{\mu},\hat{\nu})\leq \hat{R}$$
 for every essential, non-annular subsurface $Y\subseteq S_{0,5}$. The above two inequalities for subsurface coefficients give us
 \begin{equation}\label{eq : dWsym} d_{W}(\mu,\nu)\leq \hat{R}.\end{equation}
 Then by the subsurface coefficient
bounds (\ref{eq : dWnsym}) and  (\ref{eq : dWsym}) we obtain the upper bound
$R:=\max\{\hat{R},2T_{e}+1\}$ in part (\ref{nueg0 : nabd}).
 
We proceed to prove parts (\ref{nueg0 : dgi}) and (\ref{nueg0 : dgimn}). The fact that the subsurface coefficient $d_{\gamma_{i}}(\gamma_{j},\gamma_{j'})$ in part (\ref{nueg0 : dgi}) is defined follows from (\ref{eq : intgigjcomp}). Note that each annular subsurface with core curve $\gamma_{i}$ is a
 symmetric subsurface, because $\gamma_{i}$ is a component of
 $F^{-1}(\hat{\gamma}_{i})$. Thus as is shown in the proof of Theorem
 8.1 in \cite{coverscc} there exists $Q\geq 1$ so that 
 \begin{eqnarray}
 d_{\gamma_{i}}(\gamma_{j},\gamma_{j'})&\asymp_{Q,Q}&d_{\hat{\gamma}_{i}}(\hat{\gamma}_{j},\hat{\gamma}_{j'}),\;\text{and}\label{eq : dgigjgj'c} \\
 d_{\gamma_{i}}(\mu,\nu)&\asymp_{Q,Q}& d_{\hat{\gamma_{i}}}(\hat{\nu},\hat{\mu})\label{eq : dgimnc}.
 \end{eqnarray} 
 By Proposition \ref{prop : 1-nue05} (\ref{nue05 : dgi}) we have 
 $$d_{\hat{\gamma}_{i}}(\hat{\gamma}_{j},\hat{\gamma}_{j'})\asymp e_{i-1},$$ 
 then from the quasi-equality of subsurface coefficients (\ref{eq : dgigjgj'c}) the quasi-equality (\ref{nueg0 : dgi}) follows.
 
Moreover, by Proposition \ref{prop : 2-nue05} (\ref{nue05 : dgi}) we have
 $$d_{\hat{\gamma}_{i}}(\hat{\mu},\hat{\nu})\asymp e_{i-1},$$ 
  then from the quasi-equality of subsurface coefficients (\ref{eq :
   dgimnc}) the quasi-equality (\ref{nueg0 : dgimn}) follows.
 \end{proof}

\begin{figure}
\centering
\includegraphics[scale=.19]{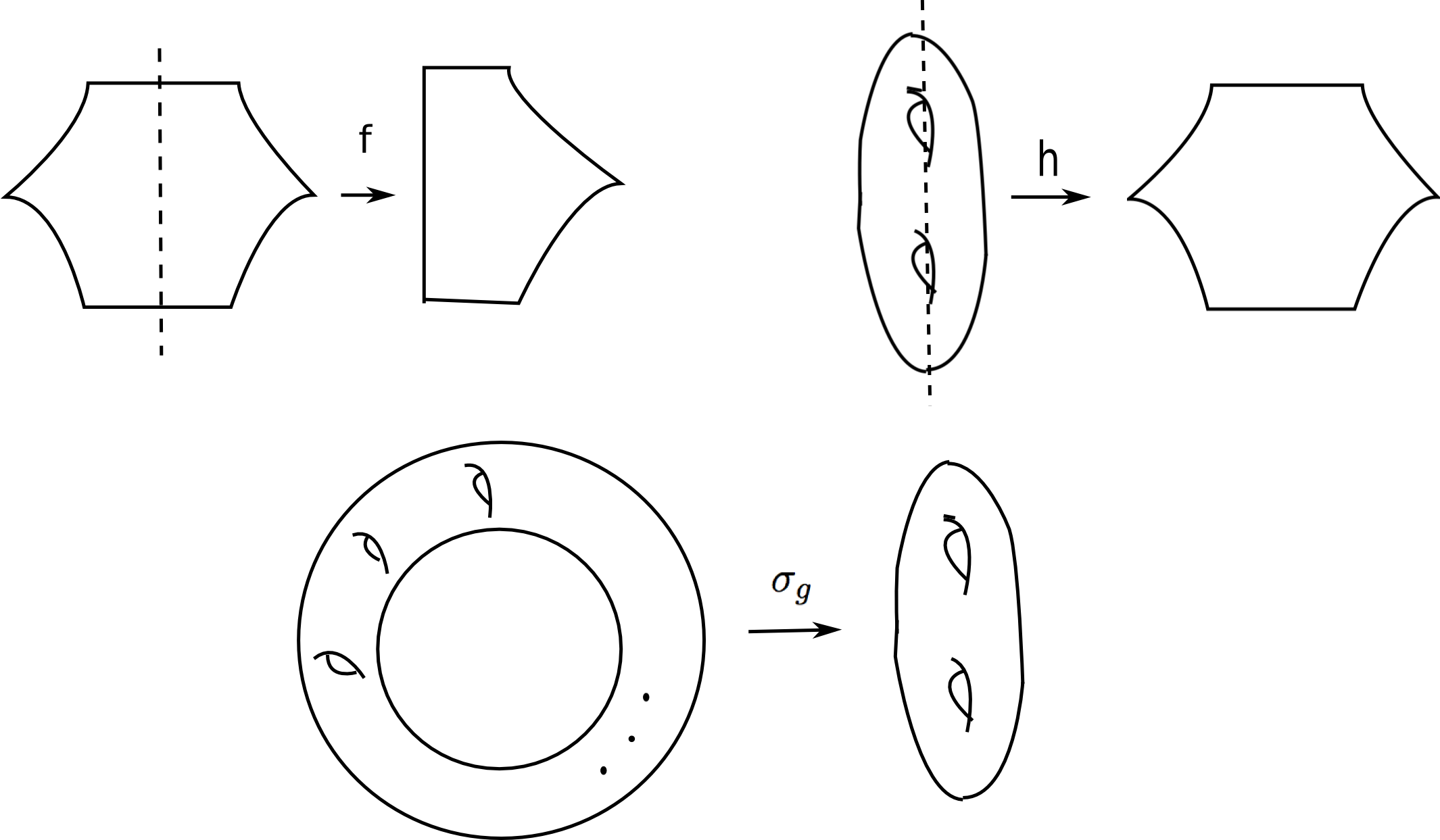}
\caption{Top left: $f:\mathcal{S}_{0,6}\to \mathcal{S}_{0,5}$ is the
  orbifold covering map realized as the rotation by angle $\pi$ about
  the axis in the picture. Top right: $h:\mathcal{S}_{2,0}\to
  \mathcal{S}_{0,6}$ is the orbifold covering map realized as the
  rotation by angle $\pi$ about the axis in the picture (hyperelliptic
  involution). Bottom: $\sigma_{g}:\mathcal{S}_{g,0}\to
  \mathcal{S}_{2,0}$ is the covering map realized by $g-1$ times
  iteration of the rotation by angle $\frac{2\pi}{g-1}$.} 
\label{fig : sgs2}
\end{figure}

\section{Recurrence of geodesics}\label{sec : recur}

Let $X\in \Teich(S)$ and $\nu^{-}$ be a Bers marking of $X$. Let
$\nu^{+}$ be a minimal filling lamination. By Lemma \ref{lem : rayinfty} there is an infinite WP geodesic ray
$r:[0,\infty)\to \Teich(S)$ with $r(0)=X$ and end invariant
$(\nu^{-},\nu^{+})$. Denote the projection of $r$ to the moduli space
by $\hat{r}$. For $\epsilon>0$, the $\epsilon-$thick part of $\mathcal{M}(S)$ consists of the Riemann surfaces with injectivity radius greater than $\epsilon$.

\begin{thm}\label{thm : recurrence}
Given $R>0$. Suppose that the pair $(\nu^{-},\nu^{+})$ has non-annular
$R-$bounded combinatorics. There is an $\epsilon>0$ such that $\hat{r}$ is recurrent to the $\epsilon-$thick part of the moduli space.
\end{thm}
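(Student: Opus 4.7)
The strategy is to push the problem into the pants graph $P(S)$ via Brock's quasi-isometric model (Theorem \ref{thm : brockqisom}) and apply the stability of hierarchy paths with non-annular bounded combinatorics (Theorem \ref{thm : bddcombstable}) to track the large-scale behavior of $Q\circ r$. Concretely, choose $t_n\to\infty$ and let $\rho_n:[0,N_n]\to P(S)$ be a hierarchy path from $Q(r(0))$ to $Q(r(t_n))$. First I would verify that the endpoints of $\rho_n$ satisfy uniform non-annular $R'$-bounded combinatorics: since the ending lamination of $r$ is $\nu^+$, a subsequence of the Bers curves at $r(t_n)$ Hausdorff-converges to $\nu^+$, so Lemma \ref{lem : subsurfcomplim} gives $|d_Y(Q(r(t_n)),\nu^+)|\leq 4$ for each non-annular $Y\subsetneq S$ once $n$ is large, and combining this with $d_Y(\nu^-,\nu^+)\leq R$ yields the claim. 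Theorem \ref{thm : bddcombstable} then makes each $\rho_n$ stable, so the $(K_{\WP},C_{\WP})$-quasi-geodesic $Q\circ r|_{[0,t_n]}$ stays within a uniform $P(S)$-neighborhood $N_D(|\rho_n|)$ of $|\rho_n|$; passing to a subsequential limit yields an infinite hierarchy $\rho$ uniformly $D$-tracked by $Q\circ r$.

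Since $\nu^+\in\mathcal{EL}(S)=\partial\mathcal{C}(S)$ (Proposition \ref{prop : bdrycc}), the main $\mathcal{C}(S)$-geodesic of $\rho$ has vertices $\{\gamma_k\}_{k\geq 0}$ escaping to $\nu^+$ along the Gromov boundary. At each $\gamma_k$ the hierarchy passes through a pants decomposition $P_k$ containing $\gamma_k$, and the key step is to show that $\{P_k\}$ is $\Mod(S)$-coarsely bounded in $P(S)$. For this, the subordinate hierarchy in the complementary subsurface $S\setminus\gamma_k$ inherits non-annular subsurface coefficients bounded by $R'$ (via the Bounded Geodesic Image Theorem \ref{thm : bddgeod} applied to the main geodesic of $\rho$), so the distance formula \eqref{eq : dsf} inside $S\setminus\gamma_k$ bounds the $P(S\setminus\gamma_k)$-diameter of the subordinate data by a constant independent of $k$. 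Crucially, the annular coefficients about $\gamma_k$ itself (permitted to be unbounded by the hypothesis) affect the marking only, not the pants decomposition, so they do not spoil the bound on $\{P_k\}$.

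By the tracking, there exist times $s_k\to\infty$ with $d_{P(S)}(Q(r(s_k)),P_k)\leq D$, so Brock's QI places $r(s_k)$ within bounded WP-distance of a $\Mod(S)$-translate of a fixed compact subset of $\Teich(S)$. Projecting to moduli space, $\hat{r}(s_k)$ therefore lies in a compact subset of $\mathcal{M}(S)$, which by Mumford compactness sits inside the $\epsilon$-thick part for some $\epsilon>0$, proving recurrence. The main obstacle is the boundedness claim in the middle paragraph: the hierarchical bookkeeping needed to show that the transition pants decompositions $\{P_k\}$ are $\Mod(S)$-coarsely bounded in $P(S)$ is where the hypothesis must be used most delicately, and where the distinction between annular coefficients (ignored by the pants graph) and non-annular coefficients (bounded by the hypothesis) is essential.
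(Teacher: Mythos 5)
There is a genuine gap, and it is in the very last step, not (as you suggest) in the ``middle paragraph'' about bounding the transition pants decompositions. You conclude: ``Brock's QI places $r(s_k)$ within bounded WP-distance of a $\Mod(S)$-translate of a fixed compact subset of $\Teich(S)$; projecting to moduli space, $\hat{r}(s_k)$ therefore lies in a compact subset of $\mathcal{M}(S)$.'' Neither clause delivers thickness. First, the Weil--Petersson metric is incomplete and $\mathcal{M}(S)$ has \emph{finite} WP diameter, so ``bounded WP-distance from a compact set'' is satisfied by every point of moduli space and carries no information. Second, and more fundamentally, the preimage under $Q$ of a bounded subset of $P(S)$ is not contained in any $\epsilon$-thick part: a surface lying arbitrarily close to a completion stratum $\mathcal{S}(\sigma)$ has every Bers pants decomposition containing $\sigma$, and such a pants decomposition can equal a fixed $P_k$ exactly. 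Coarse control in the pants graph therefore cannot, by itself, produce a lower bound on injectivity radius. Indeed your argument, if valid, would show the ray stays in a compact part of $\mathcal{M}(S)$ --- contradicting Theorem \ref{thm : excursion} of this paper, which shows that precisely because the \emph{annular} coefficients $d_{\gamma_i}(\nu^-,\nu^+)$ blow up, the curves $\gamma_i$ become arbitrarily short along $r$ and the ray leaves every thick part. The pants graph is blind to annular coefficients, so the dichotomy you invoke (annular coefficients ``affect the marking only, not the pants decomposition'') is exactly why your approach cannot close.

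The paper supplies the missing geometric input via the Geodesic Limit Theorem (Theorem \ref{thm : geodlimit}). One cuts $r$ into subsegments $\zeta_n = r|_{[nT,(n+1)T]}$ of a fixed length $T\ge T_0$, checks (much as in your first paragraph, using stability and no-backtracking) that the endpoint pants decompositions $Q(\zeta_n(0)), Q(\zeta_n(T))$ have uniform non-annular bounded combinatorics, and then passes to a geometric limit of the segments themselves: a piecewise geodesic $\hat\zeta$ in $\overline{\Teich(S)}$ whose open pieces lie in a single stratum $\mathcal{S}(\hat\tau)$. The distance formula \eqref{eq : dsf}, applied with a threshold above $R$, forces $\hat\tau=\emptyset$ --- if some curve were pinched along the whole limit, it would lie in Bers pants decompositions at both endpoints, kill all contributing subsurface terms except those in $S\setminus\hat\tau$, and produce a proper non-annular subsurface with coefficient $\ge R$, contradicting the hypothesis. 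Hence an interior point $\hat\zeta(t^*)$ lies in $\Teich(S)$ proper, has injectivity radius $>2\epsilon$, and the convergence $\psi_{m_n}(\zeta_{m_n}(t^*))\to\hat\zeta(t^*)$ yields $\inj(r(m_nT+t^*))>\epsilon$ along a subsequence. This limiting argument about actual WP geodesic segments is the step your proposal has no substitute for.
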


 \begin{lem}\label{lem : zetamnthick}
Given $R>0$, there are constants $T_{0}>0$ and $\epsilon>0$  with the following property. Let $T>T_{0}$ and $\zeta_{n}:[0,T]\to\Teich(S)$ be a sequence of WP geodesic segments parametrized by arc-length so that the pair $(Q(\zeta_{0}(0)),Q(\zeta_{n}(T)))$ has non-annular $R-$bounded combinatorics. Then there is a time $t^{*}\in[0,T]$ and a sequence $\{m_{n}\}_{n=1}^{\infty}$ so that for all $n$ sufficiently large we have
$$\inj(\zeta_{m_{n}}(t^{*}))>\epsilon.$$
\end{lem}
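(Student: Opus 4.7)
My plan is to apply the Geodesic Limit Theorem (Theorem \ref{thm : geodlimit}) to the sequence $\{\zeta_n\}$ and show that the limiting piecewise geodesic $\hat{\zeta}$ lies in the top stratum $\Teich(S)$ at an interior time. The lemma will then follow from the mapping-class-equivariance and continuity of the injectivity radius on $\Teich(S)$.

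Concretely, Theorem \ref{thm : geodlimit} provides, after passing to a subsequence indexed by $\{m_n\}$, a partition $0 = t_0 < \cdots < t_{k+1} = T$, multi-curves $\sigma_0, \ldots, \sigma_{k+1}$ with $\hat{\tau} = \sigma_i \cap \sigma_{i+1}$, mapping classes $\psi_n$ and $\varphi_{i,n} = \mathcal{T}_{i,n}\circ \cdots \circ \mathcal{T}_{1,n}\circ \psi_n$ with $\mathcal{T}_{i,n} \in \tw(\sigma_i - \hat{\tau})$, and a piecewise geodesic $\hat{\zeta}$ with $\hat{\zeta}((t_i, t_{i+1})) \subset \mathcal{S}(\hat{\tau})$ and $\varphi_{i,n}(\zeta_{m_n}(t)) \to \hat{\zeta}(t)$ for $t \in [t_i, t_{i+1}]$. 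Choose $t^* := t_1/2 \in (t_0, t_1)$. If $\hat{\tau} = \emptyset$, then $\hat{\zeta}(t^*) \in \Teich(S)$, so $\epsilon_0 := \inj(\hat{\zeta}(t^*)) > 0$; since $\psi_{m_n}$ acts on $\Teich(S)$ by isometry, $\inj(\zeta_{m_n}(t^*)) = \inj(\psi_{m_n}(\zeta_{m_n}(t^*))) \to \epsilon_0$, and setting $\epsilon := \epsilon_0/2$ completes the argument.

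The main obstacle is therefore to establish $\hat{\tau} = \emptyset$. Suppose for contradiction that $\alpha \in \hat{\tau}$. Since $\alpha$ is disjoint from each twist locus $\sigma_i - \hat{\tau}$, the twists $\mathcal{T}_{j,n}$ fix $\alpha$, and consequently $\varphi_{i,n}^{-1}(\alpha) = \psi_n^{-1}(\alpha) =: \alpha_n$. The convergence $\varphi_{i,n}(\zeta_{m_n}(t)) \to \hat{\zeta}(t) \in \mathcal{S}(\hat{\tau})$ forces $\ell_{\alpha_n}(\zeta_{m_n}(t)) \to 0$ for each interior time $t$, so $\alpha_n$ appears as a Bers curve of $\zeta_{m_n}(s_n)$ at some interior $s_n$. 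On the other hand, by Brock's quasi-isometric model (Theorem \ref{thm : brockqisom}), $Q \circ \zeta_{m_n}$ is a uniform $(K_\WP, C_\WP)$-quasi-geodesic in $P(S)$ between the endpoints $P_n^- := Q(\zeta_{m_n}(0))$ and $P_n^+ := Q(\zeta_{m_n}(T))$, which have non-annular $R$-bounded combinatorics by hypothesis. By the stability theorem \ref{thm : bddcombstable}, this quasi-geodesic lies within $D := d_R(K_\WP, C_\WP)$ of a hierarchy path $\rho_n$ from $P_n^-$ to $P_n^+$, so $\alpha_n$ is within $D$ elementary moves of a pants decomposition on $\rho_n$.

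The hardest step is then to derive a contradiction between the pinching of $\alpha_n$ along $\zeta_{m_n}$ at an interior time and the $R$-bounded non-annular combinatorics of $(P_n^-, P_n^+)$. My intended route combines the no-backtracking property (Theorem \ref{thm : nbacktr}), the Bounded Geodesic Image Theorem (Theorem \ref{thm : bddgeod}), and the distance formula \eqref{eq : dsf}: the deep pinching of $\alpha_n$ at $s_n$, together with the fact that $\alpha_n$ sits on a pants decomposition close to $\rho_n$, forces $\rho_n$ to twist substantially about $\alpha_n$ between its appearance and the endpoints; this in turn forces the subsurface projection to some non-annular subsurface adjacent to $\alpha_n$ in the hierarchy structure of $\rho_n$ to exceed $R$, contradicting the hypothesis. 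The constant $T_0$ will be chosen large enough to guarantee that the length of $\alpha_n$ must drop by enough along $\zeta_{m_n}$ to produce the requisite unbounded non-annular coefficient, so that the contradiction materializes for $T > T_0$.
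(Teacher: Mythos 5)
The reduction to proving $\hat{\tau}=\emptyset$, and the choice $t^{*}=t_{1}/2$ once that is established, match the paper exactly. The gap is in your argument that $\hat{\tau}=\emptyset$. You propose: interior pinching of $\alpha_{n}$ forces $\rho_{n}$ to twist substantially about $\alpha_{n}$, which in turn forces a non-annular coefficient adjacent to $\alpha_{n}$ to exceed $R$. Neither implication holds. First, a curve becoming short at an interior time of a WP segment does not imply a large annular coefficient between the endpoints; Lemma \ref{lem : twsh} only gives the converse direction. Second, and more fatally, even a genuinely large annular coefficient $d_{\alpha_{n}}(P_{n}^{-},P_{n}^{+})$ is perfectly consistent with non-annular $R$-bounded combinatorics of the endpoints --- that coexistence is the entire premise of this paper: Theorem \ref{thm : nueg0} produces pairs with all non-annular coefficients bounded by $R$ while the annular coefficients $d_{\gamma_{i}}$ grow like $e_{i-1}\to\infty$, and Theorem \ref{thm : excursion} shows the corresponding curves do get arbitrarily short along the ray. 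There is no mechanism by which twisting about a single curve forces a large \emph{non-annular} coefficient, so no contradiction materializes along your route, and the intended role of $T_{0}$ (``so that the length of $\alpha_{n}$ drops enough'') does not make sense: the pinching depth is governed by the limit as $n\to\infty$, not by $T$.

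The observation you are missing is that $\hat{\tau}=\sigma_{0}\cap\sigma_{1}=\sigma_{k}\cap\sigma_{k+1}$, so the curves of $\hat{\tau}$ are pinched in the limit at \emph{both endpoints} $\hat{\zeta}(0)$ and $\hat{\zeta}(T)$, not merely at interior times. Hence for large $n$ they are Bers-short at $\psi_{n}(\zeta_{n}(0))$ and (using that each $\mathcal{T}_{i,n}\in\tw(\sigma_{i}-\hat{\tau})$ fixes $\hat{\tau}$ and its lengths) at $\psi_{n}(\zeta_{n}(T))$, so $\hat{\tau}$ sits inside Bers pants decompositions $Q_{0,n}$ and $Q_{k+1,n}$ of both endpoints. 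Consequently every non-annular subsurface $Y$ overlapping $\hat{\tau}$ has $d_{Y}(Q_{0,n},Q_{k+1,n})\leq 2$ and drops out of the distance formula (\ref{eq : dsf}). Choosing $T_{0}$ so that $T\geq T_{0}$ pushes $d(Q_{0,n},Q_{k+1,n})$ above the threshold, some non-annular $Y_{n}$ must carry a coefficient $\geq A\geq R$; since $Y_{n}$ cannot overlap $\hat{\tau}$ and $\hat{\tau}\neq\emptyset$, $Y_{n}$ is a \emph{proper} subsurface, contradicting the $R$-bounded non-annular combinatorics of the endpoints. That is the correct use of $T_{0}$: it beats the threshold in the distance formula, forcing the large distance to be carried by a proper non-annular subsurface disjoint from $\hat{\tau}$.
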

\begin{proof}
Consider the limiting picture of geodesic segments $\zeta_{n}$ as was described in
 Theorem \ref{thm : geodlimit}. Let the partition $0=t_{0}<...<t_{k+1}=T$, the multi-curves $\sigma_{i}$, $i=0,...,k+1$, the multi-curve $\hat{\tau}$, and the piecewise geodesic 
 $$\hat{\zeta}:[0,T]\to \overline{\Teich(S)}$$
  be as in the theorem. Furthermore, recall the elements of the mapping class group $\psi_{n}$ for $n\in\mathbb{N}$, and $\mathcal{T}_{i,n}\in\tw(\sigma_{i}-\hat{\tau})$ for $i=0,...,k+1$ and $n\in\mathbb{N}$. Also as in the theorem set $\varphi_{i,n}=\mathcal{T}_{i,n}\circ ...\circ\mathcal{T}_{1,n}\circ\psi_{n}$.

 First we show that $\hat{\tau}=\emptyset$. Suppose to the contrary that $\hat{\tau}\neq \emptyset$. 

From Theorem \ref{thm : geodlimit} we know that
$\hat{\tau}=\sigma_{0}\cap\sigma_{1}$, so $\hat{\tau}\subseteq
\sigma_{0}$. Moreover by Theorem \ref{thm : geodlimit} (\ref{gl :
  sigma}), $\hat{\zeta}(0)\in \mathcal{S}(\sigma_{0})$. Thus for any
$\alpha\in \hat{\tau}$ we have $\ell_{\alpha}(\hat{\zeta}(0))=0$.
Furthermore, by Theorem \ref{thm : geodlimit} (\ref{gl : converge}), after possibly passing to a subsequence 
$\psi_{n}(\zeta_{n}(0))\to \hat{\zeta}(0)$ as $n\to\infty$. Thus by continuity of length functions for all $n$
sufficiently large and any $\alpha\in \hat{\tau}$,
$\ell_{\alpha}(\psi_{n}(\zeta_{n}(0))\leq L_{S}$. Thus there is
$Q_{0,n}$ a Bers pants decomposition of $\psi_{n}(\zeta_{n}(0))$
that contains $\hat{\tau}$. 
 
Similarly since $\hat{\tau}=\sigma_{k}\cap\sigma_{k+1}$ (as in Theorem \ref{thm : geodlimit}), we have
$\hat{\tau}\subseteq \sigma_{k+1}$. Moreover, by Theorem \ref{thm : geodlimit}(\ref{gl : sigma}), we know that $\hat{\zeta}(T)\in\mathcal{S}(\sigma_{k+1})$. Thus for any $\alpha\in\hat{\tau}$,
we have $\ell_{\alpha}(\hat{\zeta}(T))=0$. Furthermore, by Theorem
\ref{thm : geodlimit} (\ref{gl : converge}) after possibly passing to a subsequence, $\varphi_{k,n}(\zeta_{n}(T))\to\hat{\zeta}(T)$ as $n\to \infty$. Thus by continuity of length functions for all $n$ sufficiently large
and any $\alpha\in \hat{\tau}$,
we have that $\ell_{\alpha}(\varphi_{k,n}(\zeta_{n}(T))\leq L_{S}$. 

Now note that we have
$\varphi_{k,n}=\mathcal{T}_{k,n}\circ ...\circ\mathcal{T}_{1,n}\circ
\psi_{n}$. The element of mapping class group $\mathcal{T}_{i,n}$ is a composition of powers of Dehn
twists about the curves in $\sigma_{i}$ and
$\hat{\tau}\subseteq \sigma_{i}$. Therefore, $\mathcal{T}_{i,n}$ preserves the isotopy class and the length
of every curve $\alpha\in\hat{\tau}$. Thus applying
$(\mathcal{T}_{k,n}\circ ...\circ\mathcal{T}_{1,n})^{-1}$ to $\ell_{\alpha}(\varphi_{k,n}(\zeta_{n}(T)))$, we obtain
$$\ell_{\alpha}(\varphi_{k,n}(\zeta_{n}(T)))=\ell_{\alpha}(\psi_{n}(\zeta_{n}(T))).$$
 Then by the previous paragraph for all $n$ sufficiently large,
$\ell_{\alpha}(\psi_{n}(\zeta_{n}(T)))\leq L_{S}$. Thus there is a
Bers pants decomposition  $Q_{k+1,n}$ of $\psi_{n}(\zeta_{n}(T))$
containing $\hat{\tau}$.  

 Let the threshold constant in the distance formula (\ref{eq : dsf}) be $A> \max\{M_{1},R,2\}$. Then there are constants $K\geq 1$ and $C\geq 0$ such that 
 \begin{equation}\label{eq : dsfQ0Qk+1}d(Q_{0,n},Q_{k+1,n})\asymp_{K,C} \sum_{\substack{Y\subseteq S\\ \nonannular}}\{d_{Y}(Q_{0,n},Q_{k+1,n})\}_{A}.\end{equation}
As we saw above $\hat{\tau}\subset Q_{0,n}$ and $\hat{\tau}\subset
  Q_{k+1,n}$. So for any essential subsurface $W$ satisfying $\hat{\tau}\pitchfork W$,
  it follows that
  $$d_{W}(Q_{0,n},Q_{k+1,n})\leq 2.$$
   Thus subsurfaces which overlap $\hat{\tau}$ do not contribute to the right hand side of (\ref{eq : dsfQ0Qk+1}). On the other hand, by Theorem \ref{thm : brockqisom} (Quasi-Isometric Model) there are constants $K_{\WP}\geq 1,C_{\WP}\geq 0$ such that 
   $$d(Q_{0,n},Q_{k+1,n})\asymp_{K_{\WP},C_{\WP}} d_{\WP}(\zeta_{n}(0),\zeta_{n}(T)).$$ 
   Let $T_{0}=  K_{\WP}(KA+KC)+K_{\WP}C_{\WP}$. Since $T\geq T_{0}$ by the above quasi-equality 
   $$d(Q_{0,n},Q_{k+1,n})\geq KA+KC.$$
 Now (\ref{eq : dsfQ0Qk+1}) and the above inequality imply that for any $n\in\mathbb{N}$, there is an essential non-annular subsurface $Y_n$ with 
 $$d_{Y_{n}}(Q_{0,n},Q_{k+1,n})\geq A\geq R.$$
 But as we saw above $Y_n$ can not overlap $\hat{\tau}$ (otherwise $d_{Y_n}(Q_{0,n},Q_{k+1,n})\leq 2<A$), therefore $Y_{n}\subseteq S\backslash \hat{\tau}$. Moreover, since $\hat{\tau}\neq \emptyset$, $Y_{n}$ is a proper subsurface. Applying $\psi_{n}^{-1}$ to the subsurface coefficient above we get
  \begin{equation}\label{eq : dpsinY}d_{\psi_{n}^{-1}(Y_{n})}(\psi_{n}^{-1}(Q_{0,n}),\psi_{n}^{-1}(Q_{k+1,n}))\geq R,\end{equation}
where $\psi_{n}^{-1}(Q_{0,n})$ is a Bers pants decomposition of
$\zeta_{n}(0)$ and  $\psi_{n}^{-1}(Q_{k+1,n})$ is a Bers pants
decomposition of $\zeta_{n}(T)$. Moreover, $\psi_{n}^{-1}(Y_{n})$ is a
proper subsurface of $S$, because $Y_{n}$ is a proper subsurface of $S$. But then the lower bound (\ref{eq : dpsinY})
contradicts the non-annular bounded combinatorics assumption for the
two pants decompositions $Q(\zeta_{n}(0))$ and $Q(\zeta_{n}(T))$. This contradiction completes the proof of the fact that $\hat{\tau}=\emptyset$.

Let $t^{*}=\frac{t_1}{2}$. By Theorem \ref{thm : geodlimit} (\ref{gl : sigma}) and since $\hat{\tau}=\emptyset$, we have that $\hat{\zeta}(t^{*})\in \Teich(S)$.
So $\inj(\hat{\zeta}(t^{*}))> 2\epsilon$ for some $\epsilon>0$. Furthermore, by
Theorem \ref{thm : geodlimit} (\ref{gl : converge}), there is a sequence $\{m_{n}\}_{n=1}^{\infty}$ such that
$\psi_{m_{n}}(\zeta_{m_{n}}(t^{*}))\to \hat{\zeta}(t^{*})$ as $n\to \infty$.
Therefore, $\inj(\psi_{m_{n}}(\zeta_{m_{n}}(t^{*})))> \epsilon$ for any $n$ sufficiently large. Then since the action by
elements of the mapping class group does not change the injectivity
radius of a surface $\inj(\zeta_{m_{n}}(t^{*}))>\epsilon$. The lemma is proved.
\end{proof}

\begin{proof}[Proof of Theorem \ref{thm : recurrence}]Let $T_{0}>0$ be as in Lemma \ref{lem : zetamnthick} and $T\geq T_{0}$. Consider the sequence of WP geodesic segments 
$$\zeta_{n}:=r|_{[nT,(n+1)T]}:[0,T]\to \Teich(S).$$
Note that Theorem \ref{thm : bddcombstable} guarantees that for $D=d_{R}(K_{\WP},C_{\WP})$, the paths $Q(r)$ and $\rho$, $D-$fellow travel in the pants graph. Let $z^{-}_{n},z^{+}_{n}\in[0,\infty)$ be so that 
\begin{eqnarray*}
d(\rho(z^{-}_{n}),Q(\zeta_{n}(0)))&\leq& D,\;\text{and}\\
 d(\rho(z^{+}_{n}),Q(\zeta_{n}(T)))&\leq& D.
 \end{eqnarray*}
Then for every essential non-annular  subsurface $Y\subsetneq S$, 
\begin{eqnarray} d_{Y}(\rho(z^{-}_{n}),Q(\zeta_{n}(0)))&\leq& D,\;\text{and}\label{eq : dYrz-q0} \\
d_{Y}(\rho(z^{+}_{n}),Q(\zeta_{n}(T)))&\leq& D.\label{eq : dYrz+qT}
\end{eqnarray}
Moreover by the assumption that the pair $(\nu^{-},\nu^{+})$ has non-annular $R-$bounded combinatorics for any proper, essential non-annular subsurfaces $Y\subsetneq S$ we have, $d_{Y}(\nu^{-},\nu^{+})\leq R$. Then by the no back tracking property of Hierarchy paths (Theorem \ref{thm : nbacktr}) there is an $M_{2}>0$ so that
\begin{equation}\label{eq : dYrz-rz+}d_{Y}(\rho(z^{-}_{n}),\rho(z^{+}_{n}))\leq R+2M_{2}.\end{equation}
The subsurface coefficient bounds (\ref{eq : dYrz-q0}), (\ref{eq : dYrz+qT}) and (\ref{eq : dYrz-rz+}) combined with the triangle inequality imply that
\begin{eqnarray*}
d_{Y}(Q(\zeta_{n}(0)),Q(\zeta_{n}(T)))&\leq& 2D+R+2M_{2}+\diam_{Y}(\rho(z^{-}_{n}))+\diam_{Y}(\rho(z^{+}_{n}))\\
&\leq& 2D+R+2M_{2}+4.
\end{eqnarray*}
 Thus the pair $(Q(\zeta_{n}(0)),Q(\zeta_{n}(T)))$ has $R+2D+2M_{2}+4$ non-annular bounded combinatorics.

 Then Lemma \ref{lem : zetamnthick} applies to the sequence of geodesic segments $\zeta_{n}:=r|_{[nT,(n+1)T]}$ and implies that there are $t^{*}\in [0,T]$, $\epsilon>0$ and a sequence of integers $\{m_{n}\}_{n=1}^{\infty}$ such that at $a_{n}=m_{n}T+t^{*}$ we have
$$\inj(r(a_{n}))>\epsilon.$$
 This implies that $\hat{r}(a_{n})$ where $\hat{r}$ is the projection of $r$ to the moduli space is in the $\epsilon-$thick part of the moduli space. Furthermore, since $a_{n}\to \infty$, the ray is recurrent to the $\epsilon-$thick part of the moduli space.
\end{proof}

 Let $r:[0,\infty)\to \Teich (S)$ be the ray with end invariant $(\nu^{-},\nu^{+})$ with non-annular bounded combinatorics. In Theorem \ref{thm : recurrence} we saw that the ray $\hat{r}$ is recurrent to a compact subset of $\mathcal{M}(S)$. In Theorem \ref{thm : excursion} we show that if in addition there is a sequence of curves $\{\gamma_{i}\}_{i=1}^{\infty}$ so that $d_{\gamma_{i}}(\nu^{-},\nu^{+})\to\infty$ as $i\to\infty$, then the recurrent ray $\hat{r}$ is not contained in any compact part of the moduli space. The theorem also follows from Theorem 3.1 of \cite{bmm2}. The proof here is different and more direct and gives some information about the excursion times. We need the following result from $\S 4$ of \cite{wpbehavior}.
 \begin{lem} \label{lem : twsh} \textnormal{(Large twist $\Longrightarrow$ Short curve)}
Given $T, \epsilon_{0}$ and $N$ positive, there is an $\epsilon<\epsilon_{0}$ with the following property. Let  $\zeta: [0, T'] \to \Teich(S)$ be a WP geodesic segment of length $T'\leq T$ such that
$$\sup_{t \in [0,T']}\ell_{\gamma}(\zeta(t))\geq \epsilon_{0}$$ 
If $d_{\gamma}(\mu(\zeta(0)), \mu(\zeta(T'))) >N$ ($\mu(X)$ denotes a Bers marking of the point $X\in \Teich(S)$), then we have
  $$\inf_{t \in [0,T']} \ell_{\gamma}(\zeta(t))\leq \epsilon .$$
  Moreover, $\epsilon\to 0$ as $N\to \infty$.
  \end{lem}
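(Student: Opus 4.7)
I would prove the lemma by contradiction using the Geodesic Limit Theorem (Theorem \ref{thm : geodlimit}). Suppose the conclusion fails for some fixed $T,\epsilon_{0},N>0$; then for each $n\ge 1$ there is a WP geodesic segment $\zeta_{n}:[0,T_{n}']\to\Teich(S)$ with $T_{n}'\le T$, $\sup_{t}\ell_{\gamma}(\zeta_{n}(t))\ge\epsilon_{0}$, $d_{\gamma}(\mu(\zeta_{n}(0)),\mu(\zeta_{n}(T_{n}')))>N$, and $\inf_{t}\ell_{\gamma}(\zeta_{n}(t))>1/n$. After passing to a subsequence so that $T_{n}'\to T^{*}\in[0,T]$, apply Theorem \ref{thm : geodlimit} to obtain a partition $0=t_{0}<\dots<t_{k+1}=T^{*}$, multi-curves $\hat\tau\subseteq\sigma_{i}$, a piecewise WP geodesic $\hat\zeta:[0,T^{*}]\to\overline{\Teich(S)}$, and mapping class group elements $\psi_{n}$ together with $\mathcal{T}_{i,n}\in\tw(\sigma_{i}-\hat\tau)$ such that, setting $\varphi_{i,n}=\mathcal{T}_{i,n}\circ\cdots\circ\mathcal{T}_{1,n}\circ\psi_{n}$, one has $\varphi_{i,n}(\zeta_{n}(t))\to\hat\zeta(t)$ on each subinterval.

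\textbf{Identifying a limit curve.} Since $\inf_{t}\ell_{\gamma}(\zeta_{n}(t))>1/n\to 0$, Wolpert's gradient bound $|\sqrt{\ell_{\gamma}(X)}-\sqrt{\ell_{\gamma}(Y)}|\le d_{\WP}(X,Y)/\sqrt{2\pi}$ combined with $T_{n}'\le T$ forces $\sup_{t}\ell_{\gamma}(\zeta_{n}(t))$ to be uniformly bounded above. Length invariance under the mapping class group then gives $\ell_{\psi_{n}(\gamma)}(\psi_{n}(\zeta_{n}(0)))=\ell_{\gamma}(\zeta_{n}(0))$ uniformly bounded, so $\psi_{n}(\gamma)$ takes values in the finite set of simple closed curves whose length on the limit surface $\hat\zeta(0)$ is at most this bound. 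Passing to a further subsequence, $\psi_{n}(\gamma)=\hat\gamma$ stabilizes. Combining $\inf\ell_{\gamma}(\zeta_{n})\to 0$ with continuity of length functions under $\overline{\Teich(S)}$-convergence and strict Wolpert convexity of $\ell_{\gamma}$ along WP geodesics forces $\hat\gamma$ to be pinched on the interior of some smooth piece of $\hat\zeta$, so $\hat\gamma\in\hat\tau$.

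\textbf{Deriving the contradiction.} Since $\hat\gamma\in\hat\tau\subseteq\sigma_{i}$ for every $i$, the curve $\hat\gamma$ is disjoint from each curve in $\sigma_{i}-\hat\tau$, so each Dehn twist $\mathcal{T}_{i,n}$ fixes $\hat\gamma$ and preserves the annular projection $\pi_{\hat\gamma}$. Consequently $\varphi_{k,n}(\gamma)=\hat\gamma$ and, writing $\psi_{n}(\zeta_{n}(T_{n}'))=\mathcal{T}_{1,n}^{-1}\circ\cdots\circ\mathcal{T}_{k,n}^{-1}(\varphi_{k,n}(\zeta_{n}(T_{n}')))$, twist-invariance of $\pi_{\hat\gamma}$ together with Lemma \ref{lem : diamproj} yields
\[
d_{\gamma}(\mu(\zeta_{n}(0)),\mu(\zeta_{n}(T_{n}')))=d_{\hat\gamma}(\mu(\psi_{n}(\zeta_{n}(0))),\mu(\varphi_{k,n}(\zeta_{n}(T_{n}'))))+O(1).
\]
The convergences $\psi_{n}(\zeta_{n}(0))\to\hat\zeta(0)$ and $\varphi_{k,n}(\zeta_{n}(T_{n}'))\to\hat\zeta(T^{*})$ in $\overline{\Teich(S)}$ stabilize the Bers markings' projections to the annulus around $\hat\gamma$, bounding the right-hand side uniformly in $n$ and contradicting $d_{\gamma}>N$ once $N$ exceeds this bound. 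The quantitative dependence $\epsilon(N)\to 0$ follows by supplementing the argument with Wolpert's asymptotic expansion of the WP metric near $\mathcal{S}(\hat\gamma)$: on $\{\ell_{\gamma}\ge\epsilon\}$ the WP norm of $\partial_{\tau_{\gamma}}$ is comparable to a positive power of $\ell_{\gamma}$, which via the comparison between Fenchel-Nielsen twist variation and the subsurface projection produces an explicit bound of the form $d_{\gamma}\le C(T)\epsilon^{-a}$ for some exponent $a>0$.

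\textbf{Main obstacle.} The principal subtlety is ruling out the vertex case in which $\hat\gamma$ might lie in $\sigma_{i}-\hat\tau$ rather than in $\hat\tau$; there the disjointness argument with $\mathcal{T}_{j,n}$ for $j\neq i$ need not hold. This is handled by exploiting strict convexity of $\ell_{\gamma}$ along each $\zeta_{n}$: if the infimum tends to zero it must be attained at an interior point of the segment, and by convergence this point lies in the interior of some smooth piece of $\hat\zeta$, forcing $\hat\gamma\in\hat\tau$ as required.
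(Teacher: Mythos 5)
This lemma is not proved in the present paper: it is quoted from \S 4 of \cite{wpbehavior}, where the argument is indeed a compactness argument via the Geodesic Limit Theorem, so your choice of main tool is the right one. However, your execution has genuine logical gaps. First, the quantifier setup of your contradiction is wrong. Negating ``there exists $\epsilon<\epsilon_0$'' by testing $\epsilon=1/n$ produces segments $\zeta_n$ satisfying only $\inf_t\ell_\gamma(\zeta_n(t))>1/n$, a constraint that becomes vacuous as $n\to\infty$; such segments genuinely exist (the lemma itself only promises $\inf\le\epsilon(N)$ for some positive $\epsilon(N)$), so no contradiction can be extracted from this family. The usable contrapositive fixes a lower bound $\inf_t\ell_{\gamma_n}(\zeta_n(t))\ge\delta$ and sends $d_{\gamma_n}(\mu(\zeta_n(0)),\mu(\zeta_n(T_n')))\to\infty$; this is what yields ``$\epsilon\to 0$ as $N\to\infty$.'' Relatedly, your assertion that $\inf_t\ell_\gamma(\zeta_n(t))\to 0$ ``since $\inf>1/n\to 0$'' reverses the inequality, and your claim that $\sup_t\ell_\gamma(\zeta_n(t))$ is uniformly bounded does not follow from Wolpert's Lipschitz bound on $\sqrt{\ell_\gamma}$, which converts an \emph{upper} bound at one time into an upper bound everywhere; the hypotheses supply only lower bounds, so the stabilization of $\psi_n(\gamma)$ is not justified as written.

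Second, and more seriously, the final contradiction points in the wrong direction. If $\hat\gamma\in\hat\tau$ (or $\hat\gamma\in\sigma_i$ for some $i$), then $\ell_{\hat\gamma}$ vanishes somewhere on $\hat\zeta$, hence $\ell_{\gamma}(\zeta_n(\cdot))\to 0$ at the corresponding times; this is (essentially) the \emph{conclusion} of the lemma, not an absurdity. Moreover, convergence $\psi_n(\zeta_n(0))\to\hat\zeta(0)$ in the augmented Teichm\"uller space does \emph{not} ``stabilize the Bers markings' projections to the annulus around $\hat\gamma$'' when $\hat\gamma$ is pinched at $\hat\zeta(0)$: the augmented topology retains no twisting data about a pinching curve, and the relative twisting of Bers markings about such a curve is exactly the quantity that can blow up. The correct shape of the argument is the opposite of yours: from the uniform lower bound $\ell_{\gamma_n}\ge\delta$ one shows $\hat\gamma$ is pinched at \emph{no} point of $\hat\zeta$ (neither in $\hat\tau$ nor in any $\sigma_i$), and then one bounds $d_{\hat\gamma}$ of the limiting configurations --- using Behrstock's inequality (Theorem \ref{thm : beh}) to control the effect of the unbounded twists $\mathcal{T}_{i,n}$, which are supported on curves of $\sigma_i-\hat\tau$ possibly crossing $\hat\gamma$ --- contradicting $d_{\gamma_n}\to\infty$. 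Your closing remark ``contradicting $d_\gamma>N$ once $N$ exceeds this bound'' also silently enlarges the fixed constant $N$, which the contradiction hypothesis does not permit.
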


\begin{thm}\label{thm : excursion}
Let $r:[0,\infty)\to\Teich(S)$ be a WP geodesic ray with end invariant $(\nu^{-},\nu^{+})$. Suppose $(\nu^{-},\nu^{+})$ has non-annular $R-$bounded combinatorics. Moreover assume that there is a sequence of curves $\{\gamma_{i}\}_{i=1}^{\infty}$ so that 
$$d_{\gamma_{i}}(\nu^{-},\nu^{+})\to\infty$$
 as $i\to\infty$. Then there is a sequence of times $b_{i}\to \infty$ as $i\to \infty$ such that 
$$\ell_{\gamma_{i}}(r(b_{i}))\to 0$$ 
as $i\to \infty$.
\end{thm}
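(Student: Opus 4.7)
The plan is to combine the stability of hierarchy paths with bounded combinatorics (Theorem \ref{thm : bddcombstable}), the thick recurrence of $\hat{r}$ from Theorem \ref{thm : recurrence}, and the ``large twist implies short curve'' principle from Lemma \ref{lem : twsh}. Fix a hierarchy path $\rho$ with endpoints $\nu^-$ and $\nu^+$. Since $(\nu^-,\nu^+)$ has non-annular $R$-bounded combinatorics, Theorem \ref{thm : bddcombstable} gives that $Q \circ r$ and $\rho$ are $D$-fellow travelers in $P(S)$ for $D = d_R(K_{\WP}, C_{\WP})$. Set $N_i := d_{\gamma_i}(\nu^-,\nu^+)$; by hypothesis $N_i \to \infty$.

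For each $i$ I would choose hierarchy parameters $s_i^-$ to be the first parameter $s$ for which $d_{\gamma_i}(\nu^-,\rho(s))$ exceeds a fixed threshold $C_0$, and $s_i^+$ the last parameter with $d_{\gamma_i}(\rho(s),\nu^+) > C_0$. The triangle inequality and Lemma \ref{lem : diamproj} then give
\[
d_{\gamma_i}(\rho(s_i^-),\rho(s_i^+)) \geq N_i - C_1
\]
for a uniform constant $C_1$. Because $\{\gamma_i\}$ is a quasi-geodesic in $\mathcal{C}(S)$ converging to $\nu^+$, the Bounded Geodesic Image Theorem (Theorem \ref{thm : bddgeod}), applied to the sequence of curves along $\rho$, shows that for any fixed parameter $s^*$ the quantity $d_{\gamma_i}(\nu^-,\rho(s^*))$ is bounded by $C_0$ for all sufficiently large $i$; hence $s_i^- \to \infty$. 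By the fellow-traveling property there are times $t_i^- < t_i^+$ along $r$ with $d(Q(r(t_i^{\pm})),\rho(s_i^{\pm})) \leq D$, and $t_i^- \to \infty$. Since each elementary move in $P(S)$ changes any subsurface projection distance by a bounded amount, this transports the lower bound to
\[
d_{\gamma_i}(\mu(r(t_i^-)),\mu(r(t_i^+))) \geq N_i - C_2 \longrightarrow \infty.
\]

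Next, by Theorem \ref{thm : recurrence} there are $\epsilon_0 > 0$ and recurrence times $a_n \to \infty$ with $\inj(r(a_n)) > \epsilon_0$; enlarging $[t_i^-,t_i^+]$ slightly to include some such $a_{n(i)}$ preserves the lower bound up to another additive constant and ensures
\[
\sup_{t \in [t_i^-,t_i^+]} \ell_{\gamma_i}(r(t)) \geq \epsilon_0.
\]
Applying Lemma \ref{lem : twsh} to the segment $\zeta_i = r|_{[t_i^-,t_i^+]}$ with parameter $\epsilon_0$ and twist lower bound $N_i - C_2$ then yields $b_i \in [t_i^-,t_i^+]$ with $\ell_{\gamma_i}(r(b_i)) \leq \epsilon_i$ and $\epsilon_i \to 0$. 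Since $t_i^- \to \infty$, also $b_i \to \infty$, which would complete the proof.

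The principal obstacle is the control of the length $T_i := t_i^+ - t_i^-$: by Theorem \ref{thm : brockqisom}, accumulating a pants-graph twist of size $N_i$ requires $T_i$ of order $N_i$, so $T_i$ grows with $N_i$ rather than being bounded. One must therefore invoke Lemma \ref{lem : twsh} in the quantitative form given in \S 4 of \cite{wpbehavior}, allowing the parameter $T$ to depend on $i$, and verify that the resulting $\epsilon_i$ still tends to $0$ as $i \to \infty$ under the simultaneous growth of $T_i$ and $N_i$. Establishing this quantitative dependence, together with the monotonicity properties of $d_{\gamma_i}(\nu^-,\rho(\cdot))$ needed for the choice of $s_i^{\pm}$ and the proof that $s_i^- \to \infty$, is where the bulk of the technical work lies.
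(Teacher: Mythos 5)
Your overall architecture (hierarchy path, $D$-fellow traveling from Theorem \ref{thm : bddcombstable}, localizing the twisting about $\gamma_i$ to a window, then invoking Lemma \ref{lem : twsh}) matches the paper's, but the proposal has a genuine gap, and it is exactly at the point you flag as the "principal obstacle." Your claim that accumulating a twist of size $N_i$ about $\gamma_i$ forces $T_i = t_i^+ - t_i^-$ to grow like $N_i$ is false, and recognizing why is the crux of the proof. The distance formula (\ref{eq : dsf}) for the pants graph sums only over \emph{non-annular} subsurfaces, so large annular twisting about $\gamma_i$ contributes nothing to pants-graph distance and hence (via Theorem \ref{thm : brockqisom}) nothing to WP distance. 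Combined with the non-annular $R$-bounded combinatorics, the pants distance along $\rho$ is uniformly comparable to the $\mathcal{C}(S)$-distance; the paper uses this to place $q_i^{\pm}$ at a parameter distance from $q_i$ (the parameter where $\gamma_i$ enters the base of $\rho$, guaranteed by the Large Link Lemma) that is bounded \emph{independently of} $i$, while still capturing $d_{\gamma_i}(\rho(q_i^-),\rho(q_i^+)) \asymp d_{\gamma_i}(\nu^-,\nu^+)$ via hierarchy property (4). The corresponding interval $[s_i^-,s_i^+]$ then has length at most $K_1L + C_1$ uniformly in $i$, so Lemma \ref{lem : twsh} applies with a single fixed $T$ and $N = N_i \to \infty$, giving $\epsilon_i \to 0$. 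Your fallback of letting $T$ depend on $i$ is not supported by the lemma as stated ($\epsilon \to 0$ as $N \to \infty$ only for fixed $T$), so without the uniform length bound the argument does not close.

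A second, related problem is your verification of the hypothesis $\sup_t \ell_{\gamma_i}(r(t)) \geq \epsilon_0$ by enlarging $[t_i^-,t_i^+]$ to contain a recurrence time $a_{n(i)}$ from Theorem \ref{thm : recurrence}. There is no control on how far the nearest $a_n$ is from your interval, so "enlarging slightly" is unjustified and any genuine enlargement would again wreck the bounded-$T$ requirement. The paper instead obtains $\ell_{\gamma_i}(r(s_i^\pm)) \geq \omega(L_S)$ directly at the two endpoints from the Annular Coefficient Comparison Lemma of \cite{wpbehavior} (since $\gamma_i$ intersects every curve of $\rho(q)$ for $q$ outside the window, it cannot be short at the corresponding surfaces), which also supplies the comparison $d_{\gamma_i}(Q(r(s_i^-)),Q(r(s_i^+))) \asymp_{1,B} d_{\gamma_i}(\rho(q_i^-),\rho(q_i^+))$ needed to feed the large twist into Lemma \ref{lem : twsh}. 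Finally, your appeal to $\{\gamma_i\}$ being a quasi-geodesic converging to $\nu^+$ imports a hypothesis not present in the theorem statement; the paper only uses $d_{\gamma_i}(\nu^-,\nu^+) \to \infty$.
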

\begin{proof}
Let $\rho:[0,\infty)\to P(S)$ be a hierarchy path with end points $\nu^{-}$ and $\nu^{+}$.  The pair $(\nu^{-},\nu^{+})$ has non-annular bounded combinatorics, so Theorem \ref{thm : bddcombstable} implies that for $D=d_{R}(K_{\WP},C_{\WP})$, $\rho$ and $Q(r)$, $D-$fellow travel. Moreover both $\rho$ and $Q(r)$ are quasi-geodesics. Thus there is a quasi-isometry $N:[0,\infty]\to [0,\infty)$ from the domain of $\rho$ to the domain of $r$. The map assigns to each $i$ in the domain of $\rho$ any time $t$ in the smallest interval in the domain of $r$ which contains all $t'$ with $d(\rho(i),Q(r(t')))\leq D$. For more detail see $\S 5$ of \cite{wpbehavior}. Denote the constants of the quasi-isometry $N$ by $K_{1}$ and $C_{1}$.

We assumed that $d_{\gamma_{i}}(\nu^{-},\nu^{+})\to\infty$ as $i\to\infty$ so for all $i$ sufficiently large
$$d_{\gamma_{i}}(\nu^{-},\nu^{+})\geq M_{1},$$
 where $M_{1}$ is the constant from the Large Link Lemma (\cite[Lemma 6.2]{mm2}), see also property (2) of hierarchy paths in \cite[Theorem 2.6]{bmm2}. Then the annular subsurface with core curve $\gamma_{i}$ is a component domain of $\rho$. Thus there is a time $q_{i}\in [0,\infty]$, so that $\rho(q_{i})$ contains the curve $\gamma_{i}$. Note that the sequence of times $q_{i}\to\infty$ as $i\to\infty$. 
 
 Since the pair $(\nu^{-},\nu^{+})$ has non-annular $R-$bounded
combinatorics, for any proper, essential non-annular subsurfaces $Y\subsetneq S$ we have $d_{Y}(\nu^{-},\nu^{+})\leq R$. Then by the no back tracking property of hierarchy paths (Theorem \ref{thm : nbacktr}) there is an $M_{2}>0$ so that for any $i,j\in\mathbb{N}$ we have
$$d_{Y}(\rho(q_{i}),\rho(q_{j}))\leq R+2M_{2}.$$
 Let the threshold in the distance formula (\ref{eq : dsf}) be $\max\{M_{1},R+2M_{2}\}$. Then there
are constants $K_{R}\geq 1$ and $C_{R}\geq 0$ corresponding to the
threshold so that
\begin{equation}\label{eq : dqiqj}d(\rho(q_{i}),\rho(q_{j}))\asymp_{K_{R},C_{R}} d_{S}(\rho(q_{i}),\rho(q_{j})).\end{equation}
 
Let $\textbf{w}(D,0,R)$ be the constant from the Annular Coefficient Comparison Lemma in $\S 6$ of \cite{wpbehavior}, see below. Let $w=\max\{{\bf w},2\}$. We have that the pair $(\nu^{-},\nu^{+})$ has non-annular $R-$bounded combinatorics. Moreover for each $i$ sufficiently large $\gamma_{i}\in\rho(q_{i})$. These two facts imply that for all $i$ sufficiently large, the curve $\gamma_{i}$ is $(w,0)-$isolated at $q_{i}$, where the subsurface with non-annular $R-$bounded combinatorics on both sides of $q_{i}$ is the surface $S$. See \cite[\S 6.1]{wpbehavior} for the definition of isolated curve (isolated annular subsurface) along a hierarchy path.  
 
Recall that $\rho$ is a $(k,c)-$quasi-geodesic in $P(S)$. Let $K_{2}=\max\{K_{1},K_{R}\}$ and $C_{2}=\max\{C_{1},C_{R}\}$, where $K_{1},C_{1}$ are the constants of the quasi-isometry $N$ and $K_{R},C_{R}$ are the constants in the quasi-equality (\ref{eq : dqiqj}). For any integer $i\geq 0$, set 
\begin{center}
$q^{-}_{i}=q_{i}-k(K_{2}(w+C_{2}))-kc$ and $q^{+}_{i}=q_{i}+k(K_{2}(w+C_{2}))+kc$. 
\end{center}
By the setup of $q_{i}^{-}$ for any $q\leq q_{i}^{-}$ we have $d(\rho(q_{i}),\rho(q))\geq K_{2}(w+C_{2})$, and by the setup of $q_{i}^{+}$ for any $q\geq q_{i}^{+}$, $d(\rho(q_{i}),\rho(q))\geq K_{2}(w+C_{2})$.
Then the quasi-equality (\ref{eq : dqiqj}) implies that
 $$d_{S}(\rho(q_{i}),\rho(q))\geq w\geq 2.$$
The above inequality guarantees that the $\mathcal{C}(S)-$distance of any curve in in the pants decomposition $\rho(q_{i})$ and any curve in in the pants decomposition $\rho(q)$ is at least $2$. Thus any curve $\rho(q_{i})$ intersects any curve in $\rho(q)$. In particular, $\gamma_{i}$ intersects any curve in $\rho(q)$. Then there is an $M_{3}> 0$, so that $d_{\gamma_{i}}(\rho(q^{+}_{i}),\nu^{+})\leq M_{3}$ and $d_{\gamma_{i}}(\rho(q^{-}_{i}),\nu^{-})\leq M_{3}$, see property (4) of hierarchy paths in \cite[Theorem 2.6]{bmm2}. Therefore,
\begin{equation}\label{eq : dgirqi=dginu}d_{\gamma_{i}}(\rho(q_{i}^{-}),\rho(q^{+}_{i}))\asymp_{1,2M_{3}} d_{\gamma_{i}}(\nu^{-},\nu^{+}).\end{equation}
  Let $s_{i}^{-}\in N(\rho(q_{i}^{-}))$ and $s_{i}^{+}\in N(\rho(q_{i}^{+}))$. Since $q_{i}^{+}-q_{i}\geq {\bf w}$ and $q_{i}-q_{i}^{-}\geq {\bf w}$ by Annular Coefficient Comparison Lemma in  \cite[\S6]{wpbehavior} we have 
 \begin{itemize}
 \item $\min\{\ell_{\gamma_{i}}(r(s_{i}^{-})),\ell_{\gamma_{i}}(r(s_{i}^{+}))\}\geq \omega(L_{S})$, where $\omega(a)$ is the width of the collar of a simple closed geodesic with length $a$ on a complete hyperbolic surface provided by the Collar Lemma (see \cite[$\S 4.1$]{buser}), and 
 \item $d_{\gamma_{i}}(Q(r(s_{i}^{-})),Q(r(s_{i}^{+})))\asymp_{1,B} d_{\gamma_{i}}(\rho(q_{i}^{-}),\rho(q_{i}^{+}))$, for a constant $B$ depending only on $D$. 
  \end{itemize}
By the setup of $q_{i}^{-}$ and $q_{i}^{+}$ we have $q^{+}_{i}-q_{i}^{-}\leq L$, where 
$$L=2k(K_{2}(w+C_{2}))+2kc.$$
 Then since $N$ is a $(K_{1},C_{1})-$quasi-isometry the length of the interval $[s_{i}^{-},s_{i}^{+}]$ is bounded above by $K_{1}L+C_{1}$. This fact and the first bullet above allow us
to apply Lemma \ref{lem : twsh} to the geodesic segment
$r|_{[s_{i}^{-},s_{i}^{+}]}$ and conclude that there exists $\epsilon_{i}>0$ depending on the upper bound for the length
of the interval $[s_{i}^{-},s_{i}^{+}]$, the lower bound
$\omega(L_{S})$ in the first bullet above and the value of the annular coefficient
$d_{\gamma_{i}}(Q(r(s_{i}^{-})),Q(r(s_{i}^{+})))$ so that  
$$\inf_{t\in[s_{i}^{-},s_{i}^{+}]}\ell_{\gamma_{i}}(r(t))\leq
\epsilon_{i}.$$ 
Moreover, the second bullet above, the quasi-equality (\ref{eq : dgirqi=dginu}) and the assumption that 
 $$d_{\gamma_{i}}(\nu^{-},\nu^{+})\to\infty$$
  as $i\to\infty$, together imply that 
$$d_{\gamma_{i}}(Q(r(s_{i}^{-})),Q(r(s_{i}^{+})))\to\infty.$$ 
as $i\to\infty$. Then the last statement of Lemma \ref{lem : twsh} guarantees that
$\epsilon_{i}\to 0$ as $i\to \infty$. 

Let $b_{i}\in[s_{i}^{-},s_{i}^{+}]$ be the time that the above infimum is
realized. Then $\ell_{\gamma_{i}}(r(b_{i}))\to 0$ as $i\to \infty$. Moreover since $q_{i}\to\infty$ as $i\to\infty$, we have $q^{-}_{i}\to\infty$ as $i\to\infty$. Thus $s^{-}_{i}\to\infty$ as $i\to\infty$. Then $b_{i}\to\infty$ as $i\to\infty$. This completes the proof of the lemma.   
\end{proof} 

\begin{proof}[Proof of Theorem \ref{thm : recurnue}] 

Let $\{\gamma_{i}\}_{i=0}^{\infty}$ be a sequence of curves as in $\S$\ref{sec : nue} and let $\nu^{+}$ be the minimal filling non-uniquely ergodic lamination in $\mathcal{EL}(S)$ which is determined by the sequence. Let $\nu^{-}$ be a marking containing $\gamma_{0},...,\gamma_{3}$ as in $\S$\ref{sec : nue}. Then  by Theorem \ref{thm : nueg0} (\ref{nueg0 : nabd}), the pair $(\nu^{-},\nu^{+})$ has non-annular $R-$bounded combinatorics. Let $X\in \Teich(S)$ be a point with a Bers marking $\nu^{-}$. By Lemma \ref{lem : rayinfty} there is a geodesic ray $r:[0,\infty)\to \Teich(S)$ with $r(0)=X$ and the forward ending lamination $\nu^{+}$. Then Theorem \ref{thm : recurrence} implies that $\hat{r}$ is recurrent to a compact subset of $\mathcal{M}(S)$. Furthermore, by Theorem \ref{thm : nueg0} (\ref{nueg0 : dgimn}), 
$$d_{\gamma_{i}}(\nu^{-},\nu^{+})\geq \frac{1}{K}e_{i-1}-C.$$
Then since $e_{i}\to\infty$ as $i\to\infty$, we have $d_{\gamma_{i}}(\nu^{-},\nu^{+})\to \infty$ as $i\to\infty$. Thus by Theorem \ref{thm : excursion} the ray $\hat{r}$ is not contained in any compact subset of $\mathcal{M}(S)$.
\end{proof}

 \begin{remark}
Masur's criterion (Theorem \ref{thm : masurcrit}) guarantees that any Teichm\"{u}ller geodesic ray with vertical lamination $\nu^{+}$ is divergent in $\mathcal{M}(S)$. 
\end{remark}

\bibliographystyle{amsalpha}
\bibliography{reference}
\end{document}